\newtheorem{thm}{Theorem}[section]
\newtheorem{lem}[thm]{Lemma}
\newtheorem{prop}[thm]{Proposition}
\theoremstyle{definition}
\newtheorem{defn}[thm]{Definition}
\theoremstyle{remark}
\newtheorem{rem}[thm]{Remark}
\numberwithin{equation}{section}
\def\N{\mathbb{N}}
\def\R{\mathbb{R}}
\def\ra{\rightarrow}
\def\bs{\backslash}
\def\al{\alpha}
\def\ep{\epsilon}
\def\la{\lambda}
\def\si{\sigma}
\def\om{\omega}
\def\Om{\Omega}
\def\de{\delta}
\def\ga{\gamma}
\def\Ga{\Gamma}
\begin{document}

\title[Stability, Uniqueness and Recurrence]{Stability, Uniqueness and Recurrence of Generalized Traveling Waves in Time Heterogeneous Media of Ignition Type}

%    Information for first author
\author{Wenxian Shen}
%    Address of record for the research reported here
\address{Department of Mathematics and Statistics, Auburn University, Auburn, AL 36849}
%    Current address
%\curraddr{Department of Mathematics and Statistics,
%Case Western Reserve University, Cleveland, Ohio 43403}
\email{wenxish@auburn.edu}
%    \thanks will become a 1st page footnote.
%\thanks{The first author was supported in part by NSF Grant \#000000.}

%    Information for second author
\author{Zhongwei Shen}
\address{Department of Mathematics and Statistics, Auburn University, Auburn, AL 36849}
\email{zzs0004@auburn.edu}
%\thanks{Support information for the second author.}

%    General info
\subjclass[2010]{35C07, 35K55, 35K57, 92D25}

%\date{January 1, 2001 and, in revised form, June 22, 2001.}

%\dedicatory{This paper is dedicated to our advisors.}

\keywords{Generalized traveling wave, stability, monotonicity, uniqueness, recurrence, almost periodicity}

\begin{abstract}
The present paper is devoted to the study of stability, uniqueness and recurrence of generalized traveling waves of reaction-diffusion equations in time heterogeneous media of ignition type, whose existence has been proven by the authors of the present paper in a previous work. It is first shown that generalized traveling waves exponentially attract wave-like initial data. Next, properties of generalized traveling waves, such as space monotonicity and exponential decay ahead of interface, are obtained. Uniqueness up to space translations of generalized traveling waves is then proven. Finally, it is shown that the wave profile
and the front propagation velocity of the unique generalized traveling wave are of the same recurrence as the media. In particular, if the media is time almost periodic, then so are the wave profile
 and the  front propagation velocity of the unique generalized traveling wave.
\end{abstract}

\maketitle

\tableofcontents

%%%%%%%%%%%%%%%%%%%%%%%%%%%%%%%%%%%%%%%%%%%%%%%%%%%%%%%%%%%%%%%%%%%%%%%%
%\footnote{Here is an example of a footnote. Notice that this footnote
%text is running on so that it can stand as an example of how a footnote
%with separate paragraphs should be written.
%\par
%And here is the beginning of the second paragraph.}%
%%%%%%%%%%%%%%%%%%%%%%%%%%%%%%%%%%%%%%%%%%%%%%%%%%%%%%%%%%%%%%%%%%%%%%%%

\section{Introduction}\label{sec-intro}

Consider the one-dimensional reaction-diffusion equation
\begin{equation}\label{general-eqn}
u_{t}=u_{xx}+f(t,x,u), \quad x\in\R,\,\,t\in\R,
\end{equation}
where $f(t,x,u)$ is of ignition type, that is, there exists $\theta\in(0,1)$ such that for all $t\in\R$ and $x\in\R$, $f(t,x,u)=0$ for $u\in[0,\theta]\cup\{1\}$ and $f(t,x,u)>0$ for $u\in(\theta,1)$. Such an equation arises in the combustion theory (see e.g. \cite{BeLaLi90,BeNiSc85}). The number $\theta$ is called the ignition temperature. The front propagation concerning this equation was first investigated by Kanel (see \cite{Ka60,Ka61,Ka62,Ka64}) in the space-time homogeneous media, i.e., $f(t,x,u)=f(u)$; he proved that all solutions, with initial data in some subclass of continuous functions with compact support and values in $[0,1]$, propagate at the same speed $c_{*}>0$, which is the speed of the unique traveling wave solution $\psi(x-c_{*}t)$, where $\psi$ satisfies
\begin{equation*}
\begin{split}
&\psi_{xx}+c_{*}\psi_{x}+f(\psi)=0,\quad\lim_{x\ra-\infty}\psi(x)=1\quad\text{and}\quad\lim_{x\ra\infty}\psi(x)=0.
\end{split}
\end{equation*}
Concerning the stability of $\psi(x-c_{*}t)$, Fife and McLeod proved in \cite{FiMc80} that $\psi(x-c_{*}t)$ attracts wave-like initial data. More precisely, if $u_{0}\in C^{1}(\R)$ is such that $u_{0}(-\infty)=1$, $u_{0}(\infty)=0$ and $(u_{0})_{x}<0$, then there exists $\ga\in C^{1}([0,\infty))$ satisfying $\lim_{t\ra\infty}\dot{\ga}(t)=0$ such that $\lim_{t\ra\infty}|u(t,x;u_{0})-\psi(x-c_{*}t-\ga(t))|=0$ uniformly in $x\in\R$. Also see \cite{ArWe75,ArWe78,FiMc77,FiMc80,Kam76,KPP37,Sa76,Uch78} and references therein for the treatment of traveling wave solutions of \eqref{general-eqn} in space-time homogeneous media and in other homogeneous media.

Recently, equation \eqref{general-eqn} in the space heterogeneous media, i.e., $f(t,x,u)=f(x,u)$, has attracted a lot of attention. In terms of space periodic media, that is, $f(x,u)$ is periodic in $x$, Berestycki and Hamel proved in \cite{BeHa02} the existence of pulsating fronts or periodic traveling waves of the form $\psi(x-c_{*}t,x)$, where $\psi(s,x)$ is periodic in $x$ and satisfies a degenerate elliptic equation with boundary conditions $\lim_{s\ra-\infty}\psi(s,x)=1$ and $\lim_{s\ra\infty}\psi(s,x)=0$ uniformly in $x$. In the work of Weinberger (see \cite{We02}), he proved from the dynamical system viewpoint that solutions with general non-negative compactly supported initial data spread with the speed $c_{*}$. We also refer to \cite{Xin91,Xin92,Xin93} for related works.

In the general space heterogeneous media, Nolen and Ryzhik (see \cite{NoRy09}), and Mellet, Roquejoffre and Sire (see \cite{MeRoSi10}) proved the existence of generalized traveling waves in the sense of Berestycki and Hamel (see \cite{BeHa07,BeHa12}). We recall that 
\begin{defn}\label{def-generalized-critical}
A global-in-time classical solution $u(t,x)$ of \eqref{general-eqn} is called a \textit{generalized traveling wave} (connecting $0$ and $1$) if $u(t,x)\in(0,1)$ for all $(t,x)\in\R\times\R$ and there is a function $\xi:\R\ra\R$, called \textit{interface location function}, such that
\begin{equation*}\label{eqn-time-profile-1}
\lim_{x\ra-\infty}u(t,x+\xi(t))=1\quad\text{and}\quad\lim_{x\ra\infty}u(t,x+\xi(t))=0\,\,\text{uniformly in}\,\,t\in\R.
\end{equation*}
\end{defn}

Later, stability and uniqueness of such generalized traveling waves in the space heterogeneous media were also established in \cite{MNRR09} by Mellet, Nolen, Roquejoffre and Ryzhik. In their work, stability means that generalized traveling waves exponentially attract wave-like initial data, and uniqueness is up to time translations. These results were then generalized by Zlato\v{s} (see \cite{Zl13}) to equations in cylindrical domains.

In a very recent work (see \cite{ShSh14}), the authors of the present paper investigated the equation \eqref{general-eqn} in the time heterogeneous media, that is,
\begin{equation}\label{main-eqn}
u_{t}=u_{xx}+f(t,u), \quad x\in\R,\,\,t\in\R
\end{equation}
and proved the existence of generalized traveling waves with additional properties. For convenience and later use, let us summarize  the main results obtained in \cite{ShSh14}. Consider the following two assumptions on the time heterogeneous nonlinearity $f(t,u)$.
\begin{itemize}
\item[\rm(H1)] {\it There is a $\theta\in(0,1)$, called the ignition temperature, such that for all $t\in\R$,
\begin{equation*}
\begin{split}
f(t,u)&=0,\quad u\in(-\infty,\theta]\cup\{1\},\\
f(t,u)&>0,\quad u\in(\theta,1),\\
f(t,u)&<0,\quad u>1.
\end{split}
\end{equation*}
The family of functions $\{f(\cdot,u), u\in\R\}$ is locally uniformly H\"{o}lder continuous. The family of functions $\{f(t,\cdot), t\in\R\}$ is locally uniformly Lipschitz continuous. For any $t\in\R$, $f(t,u)$ is continuously differentiable for $u\geq\theta$.}

\item[\rm(H2)] {\it There are Lipschitz continuous functions $f_{\inf}$, $f_{\sup}$ satisfying
\begin{equation*}
\begin{split}
&f_{\inf},f_{\sup}\in C^{1}([\theta,\infty),\R),\\
&f_{\inf}(u)=0=f_{\sup}(u)\,\,\text{for}\,\,u\in[0,\theta]\cup\{1\},\\
&0<(f_{\inf})_{u}^{+}(\theta)\leq(f_{\sup})_{u}^{+}(\theta),\\
&0>(f_{\inf})_{u}(1)\geq(f_{\sup})_{u}(1),\\
&0<f_{\inf}(u)<f_{\sup}(u)\,\,\text{for}\,\,u\in(\theta,1)
\end{split}
\end{equation*}
such that $f_{\inf}(u)\leq f(t,u)\leq f_{\sup}(u)$ for $u\in[\theta,1]$ and $t\in\R$.}
\end{itemize}

The main results in \cite{ShSh14} are summarized as follows.

\begin{prop}[\cite{ShSh14}]\label{prop-transition-wave}
Suppose $\rm(H1)$ and $\rm(H2)$. Equation \eqref{main-eqn} admits a generalized traveling wave $u^f(t,x)$ in the sense of Definition \ref{def-generalized-critical} with a continuously differentiable interface location function $\xi^f:\R\ra\R$ satisfying $u^f(t,\xi^f(t))=\theta$ for all $t\in\R$ and $\sup_{t\in\R}|\dot{\xi}^{f}(t)|<\infty$. Moreover, the following properties hold:
\begin{itemize}
\item[\rm(i)] (Space monotonicity) $u_{x}^f(t,x)<0$ for $x\in\R$ and $t\in\R$;
\item[\rm(ii)] (Uniform steepness) for any $M>0$, there is $C(M)>0$ such that
\begin{equation*}
u_{x}^f(t,x+\xi^f(t))<-C(M),\quad x\in[-M,M],\,\,t\in\R;
\end{equation*}
%\item[\rm(iii)] (Finite speed) $\sup_{t\in\R}|\dot{\xi}^f(t)|<\infty$;
\item[\rm(iii)]  (Uniform decaying estimates) there exists a continuous and strictly decreasing function $v:\R\ra(0,1)$ satisfying $v(x)\geq 1-c_{1}e^{c_{2}x}$, $x\leq-c_{3}$ for some $c_{1},c_{2},c_{3}>0$ and $v(x)=\theta e^{-c_{0}x}$, $x\geq0$ for some $c_{0}>0$ such that
\begin{equation*}
\begin{split}
u^f(t,x+\xi^f(t))&\geq v(x),\quad x\leq0;\\
u^f(t,x+\xi^f(t))&\leq v(x),\quad x\geq0;
\end{split}
\end{equation*}
\item[\rm(iv)] (Uniform decaying estimates of derivative) there is $C>0$ such that
\begin{equation*}
u_{x}^f(t,x+\xi^f(t))\geq -Cv(x),\quad x\geq0.
\end{equation*}
\end{itemize}
\end{prop}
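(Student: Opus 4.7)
The plan is to construct $u^{f}$ as a limit of solutions started at times $s_{n}\to-\infty$, using the ignition waves of the extremal homogeneous equations $v_{t}=v_{xx}+f_{\bullet}(v)$, $\bullet\in\{\inf,\sup\}$, as anchors. By the classical Kanel theory, each such equation admits a unique traveling wave $\psi_{\bullet}(x-c_{\bullet}^{*}t)$ with $c_{\bullet}^{*}>0$, $\psi_{\bullet}$ strictly decreasing, $\psi_{\bullet}(-\infty)=1$, $\psi_{\bullet}(+\infty)=0$, and an explicit exponential decay rate at $+\infty$. Because $f_{\inf}\le f(t,\cdot)\le f_{\sup}$ on $[\theta,1]$ by (H2), appropriately shifted copies of $\psi_{\inf}$ and $\psi_{\sup}$ serve as sub- and super-solutions for \eqref{main-eqn}.

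For $n\in\N$ I let $u_{n}$ solve \eqref{main-eqn} on $[-n,\infty)\times\R$ with initial datum $u_{n}(-n,x)=\psi_{\sup}(x)$. A sliding argument shows $\pa_{x}u_{n}<0$, so the $\theta$-level set of $u_{n}(t,\cdot)$ is a single point $\xi_{n}(t)$; comparison with time-shifted copies of the Kanel waves confines $\xi_{n}(t)$ to a band of width $O(1)$ around a linearly moving reference point and pins $u_{n}$ in $(0,1)$. Translating $\tilde u_{n}(t,x):=u_{n}(t,x+\xi_{n}(0))$, the uniform $L^{\infty}$ control together with (H1) yields uniform interior parabolic Schauder estimates, and a diagonal extraction produces a global-in-time classical solution $u^{f}$ with $0<u^{f}<1$. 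The interface $\xi^{f}$ is then well-defined by $u^{f}(t,\xi^{f}(t))=\theta$.

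The properties are recovered by passing to the limit. Monotonicity (i) follows from $\pa_{x}\tilde u_{n}<0$ and is promoted to strict inequality via the parabolic strong maximum principle applied to $u_{x}^{f}$, which satisfies a linear parabolic equation with H\"older coefficients on $\{u^{f}>\theta\}$ (and trivially on $\{u^{f}<\theta\}$ where $f\equiv0$). The decay estimates (iii) exploit the crucial fact that $f(t,u)=0$ for $u\le\theta$: ahead of the interface $u^{f}$ is a bounded subsolution of the heat equation and is dominated by an explicit exponential barrier $\theta e^{-c_{0}x}$; behind the interface a sub-solution built from the linearization at $u=1$ and the bound $(f_{\inf})_{u}(1)<0$ delivers $u^{f}\ge 1-c_{1}e^{c_{2}x}$. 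Estimate (iv) then follows from interior parabolic gradient estimates in the region where $u^{f}$ solves the heat equation, or by differentiating the exponential barrier. Uniform steepness (ii) comes from a parabolic Harnack inequality for $-u_{x}^{f}$ on unit space-time cylinders centered on the interface, using (iii) to preclude degeneracy of the local values. Finally, $\xi^{f}\in C^{1}$ with $\dot\xi^{f}=-u_{t}^{f}/u_{x}^{f}$ at the interface follows from the implicit function theorem together with (ii) applied at $M=0$; uniform parabolic bounds on $u_{t}^{f}$ and the lower bound on $|u_{x}^{f}|$ then give $\sup_{t}|\dot\xi^{f}(t)|<\infty$.

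I expect the main obstacle to be the uniform steepness (ii). In the time heterogeneous setting there is no self-similar structure that pins $|u_{x}^{f}|$ below at the interface, so the estimate must be engineered: one needs to trap $\tilde u_{n}$ between tight self-translates (or between tight translates of the Kanel waves) uniformly in $n$ and only then extract a $t$- and $n$-independent lower bound via Harnack. Closing this loop --- the estimate is needed for $\xi^{f}$ to be a graph, for the bound on $\dot\xi^{f}$, and for matching the exponential rate $c_{0}$ in (iii) --- is where most of the technical work will lie.
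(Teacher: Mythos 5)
This proposition is not proved in the present paper: it is quoted verbatim from the authors' earlier work \cite{ShSh14} (cited in the proposition header), and the only new content the present paper supplies is the observation, stated in the paragraph immediately following the proposition, that property (iv) is a consequence of (iii) together with interior parabolic a priori estimates. Your reconstruction is therefore not something that can be compared against a proof in this paper; what can be assessed is whether the outline you give would actually close, and there I see two substantive gaps.

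First, your decay estimate ahead of the interface would not go through as written. You propose to dominate $u^{f}$ by the barrier $\theta e^{-c_{0}x}$ in the moving frame, using only that $f\equiv 0$ for $u\le\theta$. But in the frame $\psi(t,x)=u^{f}(t,x+\xi^{f}(t))$ the equation becomes $\psi_{t}=\psi_{xx}+\dot\xi^{f}(t)\psi_{x}$ ahead of the interface, and $e^{-c_{0}x}$ is a supersolution there if and only if $\dot\xi^{f}(t)\ge c_{0}$. In time heterogeneous media the interface can and does move backward on intervals of time, so this pointwise lower bound on $\dot\xi^{f}$ is false, and the naive barrier argument collapses. This is precisely the difficulty that Section \ref{sec-modified-interface} of the present paper addresses: one has to replace $\xi^{f}$ by a modified interface $\tilde\xi$ that is uniformly increasing (with $\dot{\tilde\xi}\ge c_{B}/2$) while staying at bounded distance from $\xi^{f}$, and run the barrier argument against $\tilde\xi$. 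Your proposal does not contain this idea, and without it the estimate in (iii) (and hence (iv) and ultimately the boundedness of $\dot\xi^{f}$) is not reachable by the route you sketch. Second, the claim that comparison with the Kanel waves of $f_{\inf}$ and $f_{\sup}$ "confines $\xi_{n}(t)$ to a band of width $O(1)$" is incorrect: since $c_{\inf}<c_{\sup}$ the two bounding fronts separate at linear rate, so the band grows like $(c_{\sup}-c_{\inf})(t+n)$, and you gain nothing uniform in $n$ from it. What must be proved is a uniform bound on the interface \emph{width} (distance between two fixed level sets of $u_{n}(t,\cdot)$) independent of $n$ and $t$; you identify uniform steepness (ii) as the main obstacle, but the suggested resolution — "trap $\tilde u_{n}$ between tight translates uniformly in $n$" — presupposes exactly the uniform control it is meant to produce, and so is circular as stated.
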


The generalized traveling wave constructed in \cite{ShSh14} has more properties than stated in Proposition \ref{prop-transition-wave}. Here, we only state the properties which will be used in the present paper. Property $\rm(iv)$ in Proposition \ref{prop-transition-wave} is not stated in \cite{ShSh14}, but it is a simple consequence of property $\rm(iii)$ and a prior estimates for parabolic equations. We see that the profile function $\psi=\psi^f(t,x)=u^f(t,x+\xi^f(t))$ is a solution of
\begin{equation}\label{eqn-time-profile}
\begin{cases}
\psi_{t}=\psi_{xx}+\dot{\xi}^f(t)\psi_{x}+f(t,\psi),\cr
\lim_{x\ra-\infty}\psi(t,x)=1,\,\,\lim_{x\ra\infty}\psi(t,x)=0\,\,\text{uniformly in}\,\,t\in\R.
\end{cases}
\end{equation}

The objective of the present paper is to investigate the stability, uniqueness, and recurrence of generalized traveling waves of \eqref{main-eqn} in the sense of Definition \ref{def-generalized-critical}.
Throughout the paper, by a generalized traveling wave of \eqref{main-eqn}, it is then always in the sense of  Definition \ref{def-generalized-critical}.

Besides $\rm(H1)$ and $\rm(H2)$, we assume
\begin{itemize}
\item[\rm(H3)] {\it There exist $\theta_{*}\in(\theta,1)$ and $\beta>0$ such that $f_{u}(t,u)\leq-\beta$ for $u\geq\theta_{*}$ and $t\in\R$.}
\end{itemize}

This assumption is not restrictive. In fact, if $f(t,u)=g(t)f(u)$ with $g(t)$ bounded and uniformly positive, then $\rm(H3)$ is the case provided $f(u)$ has negative continuous derivative near $1$.

Let
$$
C_{\rm unif}^b(\R,\R)=\{u\in C(\R,\R)\,|\, u(x)\,\,\, \text{is uniformly continuous and bounded on}\,\,\, \R\}
$$
with the uniform convergence topology.
Note that for any $u_0\in C_{\rm unif}^b(\R,\R)$ and $t_0\in\R$, \eqref{main-eqn} has a unique solution $u(t,\cdot;t_0,u_0)\in C_{\rm unif}^b(\R,\R)$ with $u(t_0,\cdot;t_0,u_0)=u_0$.

We first study the stability of the generalized traveling wave $u^f(t,x)$ in Proposition \ref{prop-transition-wave}. In what follows, $u^f(t,x)$ will always be this special generalized traveling wave. The main result is stated in

\begin{thm}\label{thm-stability-tw}
Suppose $\rm(H1)$-$\rm(H3)$. Suppose that $t_0\in\R$ and $u_{0}\in C_{\rm unif}^b(\R,\R)$ satisfy
\begin{equation*}
\begin{cases}
u_{0}:\R\ra[0,1],\quad u_{0}(-\infty)=1,\\
|u_{0}(x)-u^f(t_{0},x)|\leq Ce^{-\al_{0}(x-\xi^f(t_{0}))}\,\,\text{for}\,\,x\in\R\,\,\text{for some}\,\,C>0.
\end{cases}
\end{equation*}
Then, there exist $C=C(u_{0})>0$, $\zeta_{*}=\zeta_{*}(u_{0})\in\R$ and $r=r(\al_{0})>0$ such that
\begin{equation*}
\sup_{x\in\R}|u(t,x;t_{0},u_0)-u^f(t,x-\zeta_{*})|\leq Ce^{-r(t-t_{0})}
\end{equation*}
for all $t\geq t_{0}$.
\end{thm}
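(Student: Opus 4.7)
\emph{Strategy.} My plan is to prove Theorem \ref{thm-stability-tw} by the parabolic comparison principle, constructing a pair of sub- and super-solutions $(\underline u, \bar u)$ that sandwich the solution $u(t,x;t_0,u_0)$ and whose parameters converge exponentially to a common translate of $u^f$. This is the classical Fife--McLeod squeezing adapted to the time-heterogeneous ignition setting: one exploits hypothesis (H3) to control the region behind the interface (where $u^f \approx 1$), the uniform steepness of Proposition \ref{prop-transition-wave}(ii) to control the middle zone $\theta \leq u^f \leq \theta_*$, and the exponential decay estimates of Proposition \ref{prop-transition-wave}(iii)--(iv) together with the assumption $|u_0 - u^f(t_0,\cdot)| \leq C e^{-\alpha_0(\cdot - \xi^f(t_0))}$ to control the region ahead of the interface (where $f \equiv 0$).

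\emph{Ansatz.} I would look for a super-solution of the form
\[
\bar u(t,x) = u^f\bigl(t, x - q(t)\bigr) + \delta(t)\, \Psi\bigl(x - \xi^f(t) - q(t)\bigr),
\]
where $q:[t_0,\infty)\to\R$ is smooth, nondecreasing and bounded, $\delta(t)=\delta_0 e^{-r(t-t_0)}$, and $\Psi\in C^{2}(\R)\cap L^{\infty}(\R)$ is positive with $\Psi\equiv 1$ on $(-\infty,0]$ and $\Psi(y)=e^{-\mu y}$ on $[1,\infty)$ for some $\mu\in(0,\alpha_0]$. The sub-solution $\underline u$ is the mirror image (opposite sign of the shift, $-\delta\Psi$ in place of $+\delta\Psi$, suitably truncated in $[0,1]$). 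The role of $\Psi$ is twofold: behind the shifted interface it is essentially $1$, so $\bar u$ behaves like a uniform additive perturbation to which (H3) applies; ahead, $\Psi$ decays at the rate $\mu$, matching the tail behaviour of both $u^f$ (Proposition \ref{prop-transition-wave}(iii)--(iv)) and the initial perturbation.

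\emph{Verification of the differential inequality.} Using $u^f_t - u^f_{xx}=f(t,u^f)$, the super-solution condition splits into three regimes according to the value of $u^f(t,x-q(t))+\delta(t)\Psi$. In the regime $u^f+\delta\Psi\geq\theta_*$, hypothesis (H3) yields $f(t,u^f)-f(t,u^f+\delta\Psi)\geq \beta\,\delta\Psi$; combined with $-\dot q\, u^f_x\geq 0$ this reduces the inequality to $\dot\delta + \beta\delta \geq 0$, so any $r<\beta$ works. In the middle regime $\theta\leq u^f+\delta\Psi\leq\theta_*$, the corresponding set of $x$ stays in a bounded window around $\xi^f(t)+q(t)$ (by the decay estimates), on which Proposition \ref{prop-transition-wave}(ii) gives a uniform lower bound $|u^f_x|\geq c>0$; then choosing $\dot q(t)\geq (L/c)\,\delta(t)$, with $L$ the Lipschitz constant of $f(t,\cdot)$, absorbs the Lipschitz loss $L\delta$ while keeping $q(t)$ bounded because $\int_{t_0}^{\infty}\delta(s)\,ds<\infty$. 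In the ahead regime $u^f+\delta\Psi\leq\theta$, both $f$-terms vanish and the inequality reduces to the linear estimate $-\dot q\, u^f_x + \dot\delta\,\Psi + \delta\bigl(-\Psi''+\dot\xi^f(t)\Psi'\bigr)\geq 0$, which holds by the choice of $\mu$ (using that $\inf_t \dot\xi^f>0$ is implicit in the construction of \cite{ShSh14}), provided $r$ is small relative to $\mu\bigl(\inf_t\dot\xi^f - \mu\bigr)$.

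\emph{Sandwich, conclusion, and main obstacle.} The initial bounds $\underline u(t_0,\cdot)\leq u_0\leq \bar u(t_0,\cdot)$ are arranged by choosing $q_{\pm}(t_0)$ large enough to dominate the behind/middle portion (using $u_0(-\infty)=1$ and Proposition \ref{prop-transition-wave}(iii)) and $\delta_0\geq C e^{\mu q_{+}(t_0)}$ to dominate the ahead tail (using the given exponential bound on $u_0 - u^f(t_0,\cdot)$). The comparison principle then yields $\underline u(t,x)\leq u(t,x;t_0,u_0)\leq \bar u(t,x)$ for all $t\geq t_0$. Since $q_{\pm}(t)\to q_{\pm}^{\infty}$ with $|q_{+}^{\infty}-q_{-}^{\infty}|$ controlled by $\delta_0$, a standard iterative refinement (restarting the argument at later times with a smaller $\delta_0$) forces $q_{+}^{\infty}=q_{-}^{\infty}=:\zeta_*$, producing the stated rate $e^{-r(t-t_0)}$ with $r=r(\alpha_0)=\min\bigl\{\beta/2,\ \mu(\inf_t\dot\xi^f-\mu),\ \kappa\bigr\}$ for a constant $\kappa>0$ from the middle-zone analysis. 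The principal obstacle is precisely this middle regime: $f$ is not dissipative there and the margin must come entirely from the shift $\dot q\gtrsim\delta$, forcing the coupled ODE system in $(q,\delta)$ to have $\int^{\infty}\delta<\infty$; it is this coupling, together with the requirement $\mu\leq\alpha_0$ in the ahead regime, that makes the admissible rate $r$ depend on the spatial decay rate $\alpha_0$ of the initial perturbation.
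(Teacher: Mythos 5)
Your ansatz uses the profile shifted by $\xi^f(t)$ in the argument of $\Psi$, and in the ahead regime you write that the inequality closes ``using that $\inf_t\dot\xi^f>0$ is implicit in the construction of \cite{ShSh14}.'' That hypothesis is false, and the paper says so explicitly: ``$\xi^f(t)$ moves back and forth in general due to the time-dependence of $f(t,u)$.'' Proposition \ref{prop-transition-wave} only gives $\sup_t|\dot\xi^f(t)|<\infty$, not a positive lower bound. In the region ahead of the interface the super-solution inequality reduces to $-r+\mu(\dot\xi^f(t)+\dot q(t))-\mu^2\geq 0$ (after dividing by $\delta e^{-\mu y}$), and there the residual term $-\dot q\,u^f_x$ cannot rescue you because $|u^f_x|$ is itself exponentially small. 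If $\dot\xi^f(t)$ dips below $\mu+r/\mu$ on a set of positive measure --- which you cannot rule out --- the inequality fails. The entire Section~\ref{sec-modified-interface} of the paper exists precisely to repair this: one replaces $\xi^f(t)$ in the perturbation term by a \emph{modified} interface $\tilde\xi(t)$ satisfying $\dot{\tilde\xi}(t)\geq c_B/2>0$ and $|\tilde\xi(t)-\xi^f(t)|\leq d_{\max}$, and this positive drift is what makes the ahead-regime inequality close. Your construction must be amended to use such a $\tilde\xi$.

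A second, independent gap is the final ``iterative refinement.'' The one-shot comparison only gives $q_\pm(t)\to q_\pm^\infty$ with $|q_\pm^\infty - q_\pm(t_0)|\lesssim \delta_0/r$, and the resulting gap $q_+^\infty-q_-^\infty$ is \emph{at least as large} as the initial gap $q_+(t_0)-q_-(t_0)$: nothing in the comparison argument alone forces the two shifts to coalesce, and restarting with a smaller correction $\delta_0$ at a later time does not shrink the shift gap. The paper closes this by a genuine squeezing step (Proposition~\ref{prop-stability}): using the uniform steepness of $u^f$ in the middle zone and the parabolic Harnack inequality, after a fixed waiting time one can move one of the shifts inward by a \emph{fixed fraction} $dr_0(\zeta^+_n-\zeta^-_n)$ of the current gap; then Theorem~\ref{thm-aprior-estimate} is reapplied to propagate the improved two-sided bound forward. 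It is this geometric contraction of the gap, not the exponential decay of the correction amplitude, that produces $\zeta_*$. Without this mechanism the argument stops short of the stated conclusion: one obtains trapping between two distinct translates, not convergence to a single one.
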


The proof of Theorem \ref{thm-stability-tw} is a version of the ``squeezing technique", which has been verified to be successful in many situations (see e.g. \cite{BaCh99,Ch09,Ch97,ChGu02,ChGuWu08,MNRR09,MaWu07,Sh99-1,SmZh00}). Our arguments are closer to the arguments in \cite{MNRR09}, where the space heterogeneous nonlinearity is treated. However, while the rightmost interface always moves rightward in the space heterogeneous case due to the time monotonicity, it is not the case here. In fact, $\xi^{f}(t)$ moves back and force in general due to the time-dependence of $f(t,u)$. This unpleasant fact is a source of many difficulties. It is overcome
in this paper
by introducing the modified interface location, which always moves rightward and stays within a neighborhood of the interface location (see Proposition \ref{prop-interface-rightward-propagation-improved}), and thus, shows the rightward propagation nature of the generalized traveling wave $u^{f}(t,x)$.

Next, we explore the monotonicity and exponential  decay ahead of interface for any generalized traveling wave of \eqref{main-eqn}, which play an important role in the study of uniqueness of generalized traveling waves and are also of independent interest. We prove

\begin{thm}\label{thm-monotoncity-exponential-decay}
Suppose $\rm(H1)$-$\rm(H3)$. Let  $v(t,x)$ be an arbitrary generalized traveling wave of \eqref{main-eqn} with interface location function $\xi^{v}(t)$. Then,
\begin{itemize}
\item[\rm(i)] there holds $v_{x}(t,x)<0$ for all $x\in\R$ and $t\in\R$;

\item[\rm(ii)] there are a constant $\hat c>0$ and a twice continuously differentiable function $\hat{\xi}^{v}:\R\ra\R$ satisfying
\begin{equation*}
0<\inf_{t\in\R}\dot{\hat{\xi}}^{v}(t)\leq\sup_{t\in\R}\dot{\hat{\xi}}^{v}(t)<\infty\quad\text{and}\quad\sup_{t\in\R}|\ddot{\hat{\xi}}^{v}(t) |<\infty
\end{equation*}
such that $\sup_{t\in\R}|\hat{\xi}^{v}(t)-\xi^{v}(t)|<\infty$ and
\begin{equation*}
v(t,x+\hat{\xi}^{v}(t))\leq\theta e^{-\hat c x},\quad x\geq 0
\end{equation*}
for all $t\in\R$.
\end{itemize}
\end{thm}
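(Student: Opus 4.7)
The plan is to prove (ii) first, since it provides the structural decay information that is essential for the sliding argument used in (i).

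For (ii), the first task is to show that $v$ propagates rightward at a uniformly positive speed. Because $f(t,u) \geq f_{\inf}(u)$ on $[0,1]$ by (H2) (extending $f_{\inf}$ by zero below $\theta$), $v$ is a supersolution of the autonomous ignition equation $w_t = w_{xx} + f_{\inf}(w)$, which admits a unique traveling wave $\psi_{\inf}(x - c_{\inf} t)$ of speed $c_{\inf} > 0$. The uniform limits in Definition \ref{def-generalized-critical} let us place a shifted copy of $\psi_{\inf}$ below $v(t_0, \cdot)$ at any $t_0 \in \R$, and the parabolic comparison principle combined with the Fife--McLeod convergence theory for $\psi_{\inf}$ yields constants $T_0, \tau_0 > 0$ with $\xi^v(t+T_0) \geq \xi^v(t) + \tau_0$ for all $t \in \R$. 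This is the analogue for arbitrary $v$ of the rightward propagation established for $u^f$ in Proposition \ref{prop-transition-wave}.

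The second task constructs $\hat{\xi}^v$ by mollifying $\xi^v$ with a compactly supported smooth kernel and adding a small linear drift, producing a $C^2$ function with $0 < \inf_t \dot{\hat{\xi}}^v \leq \sup_t \dot{\hat{\xi}}^v < \infty$ and $\sup_t |\ddot{\hat{\xi}}^v| < \infty$, staying within bounded distance of $\xi^v$. After a further constant shift, the uniform decay $v(t, x + \xi^v(t)) \to 0$ as $x \to \infty$ guarantees $v(t, x) \leq \theta/2$ for all $x \geq \hat{\xi}^v(t)$, so $f(t, v) = 0$ there and $v$ solves $v_t = v_{xx}$ on this region. Taking $\hat c := \inf_t \dot{\hat{\xi}}^v(t) > 0$, the candidate $\overline w(t, x) := \theta e^{-\hat c(x - \hat{\xi}^v(t))}$ satisfies $\overline w_t - \overline w_{xx} = \hat c(\dot{\hat{\xi}}^v(t) - \hat c)\overline w \geq 0$ and is therefore a heat supersolution. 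Since $v(t, \hat{\xi}^v(t)) \leq \theta = \overline w(t, \hat{\xi}^v(t))$ on the moving boundary and $v, \overline w \to 0$ as $x \to \infty$ uniformly in $t$, the parabolic maximum principle on $\{(t,x) : x \geq \hat{\xi}^v(t)\}$ yields $v(t, x + \hat{\xi}^v(t)) \leq \theta e^{-\hat c x}$ for $x \geq 0$.

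For (i), I would apply the sliding method to $\psi(t, x) := v(t, x + \hat{\xi}^v(t))$, which solves a uniformly parabolic equation with bounded coefficients, satisfies $\psi(t, x) \to 1$ as $x \to -\infty$ uniformly in $t$, and $\psi(t, x) \leq \theta e^{-\hat c x}$ for $x \geq 0$ by (ii). Set $s^* := \inf\{ s > 0 : \psi(t, x) \geq \psi(t, x+s) \text{ for all } (t, x) \in \R \times \R \}$. The uniform limits on a bounded interface region, together with the exponential decay ahead of it, show the defining inequality holds for all sufficiently large $s$, so $s^* < \infty$. If $s^* > 0$, then $W(t, x) := \psi(t, x) - \psi(t, x+s^*) \geq 0$ satisfies a linear equation $W_t = W_{xx} + c(t, x) W$ with bounded $c$. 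Either $\inf W$ is attained at an interior point, whence the strong maximum principle forces $W \equiv 0$, contradicting the non-trivial limits of $\psi$; or $\inf W = 0$ is only approached in the spatial tails. The right tail is excluded by pairing the upper bound from (ii) with a matching exponential lower bound $\psi(t, x) \geq \alpha e^{-\gamma x}$ for $x \geq 0$, obtained by a dual subsolution argument on the heat equation in $\{x \geq \hat{\xi}^v(t)\}$ using the uniform positivity of $\psi(t, 0)$. The left tail is excluded by an analogous analysis at $u = 1$, linearizing via $(f_{\sup})_u(1) < 0$ from (H2) to obtain exponential control of $1 - \psi(t, x)$ as $x \to -\infty$. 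Thus $s^* = 0$, so $\psi_x \leq 0$, and a final application of the strong maximum principle to the linear equation satisfied by $v_x$ upgrades this to the strict inequality $v_x(t, x) < 0$.

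The main obstacle is precisely the tail analysis in the sliding step: both tails of $W$ degenerate to zero, and to forbid the infimum being attained in the limit one needs matched exponential estimates on $\psi$ from above and below in each tail. The construction in (ii) furnishes one side at the right tail; the complementary exponential lower bound and the corresponding control of $1 - \psi$ at the left tail require essentially the same subsolution/supersolution construction carried out in reverse, and it is at this step that the positive infimum of $\dot{\hat{\xi}}^v$ and the bounded separation of $\hat{\xi}^v$ from $\xi^v$ enter crucially to guarantee that the comparison regions have the correct moving-frame geometry.
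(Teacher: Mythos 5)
Your plan reverses the paper's order: the paper proves (i) first (Subsection 5.1) and uses it in (ii) to make the level sets $\xi^v_\lambda$ well-defined; the reversal could in principle be organized differently, but two of your steps have genuine gaps. First, in (ii), mollifying $\xi^v$ and adding a small linear drift does not produce a function with uniformly positive derivative. The propagation estimate only gives $\xi^v(t)-\xi^v(s)\geq (c_B-\epsilon)(t-s-T_\epsilon)$ for $t\geq s$, so $\xi^v$ may decrease by up to $(c_B-\epsilon)T_\epsilon$ on short intervals; a local average inherits this, so $(\eta_\delta*\xi^v)'$ is not bounded below by a positive constant, and no small added drift fixes it. The paper's Lemmas \ref{lem-interface-rightward-propagation-improved} and \ref{lem-interface-propagation-improved-general} avoid this by an explicit step construction (piecewise-linear segments of slope $c_B/2$ between consecutive hitting times, which are uniformly separated, followed by a $C^2$ smoothing of the jumps); that specific structure is what gives $\dot{\hat\xi}^v\geq c_B/2$ and $|\ddot{\hat\xi}^v|$ bounded. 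Your supersolution comparison with $\theta e^{-\hat c(x-\hat\xi^v(t))}$ in the half-domain $\{x\geq\hat\xi^v(t)\}$ is essentially the paper's argument once this construction is in hand.

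Second, the sliding step in (i) has a missing case and a wrong tool. Your dichotomy (infimum attained at an interior point vs.\ approached in the spatial tails) omits the case where $\inf W$ is approached along $(t_n,x_n)$ with $|t_n|\to\infty$ but $x_n$ staying in a bounded interface-relative band; this is precisely where the paper's claim \eqref{a-claim-space-mono} is needed, and it is handled by a translation-compactness argument together with the Harnack inequality, not by the strong maximum principle at a single point. For the actual tails, exponential matching is not the right device: in the right tail $\inf W=0$ is automatic (both $\psi(t,x)$ and $\psi(t,x+s^*)$ decay), and matched two-sided exponential bounds do not by themselves let you shrink $s^*$. The paper instead uses structural maximum-principle arguments in the tail regions: ahead of the wave ($v\leq\theta/2$) the nonlinearity vanishes so $W$ solves the heat equation and the sign propagates from the boundary of the middle region; behind ($v\geq(1+\theta_*)/2$) the zeroth-order coefficient is $\leq-\beta$ by $\rm(H3)$, and a constant decaying subsolution does the job. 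This is both simpler and complete, whereas the complementary exponential lower bound ahead of the interface and the exponential control of $1-\psi$ behind it, which you correctly flag as the main obstacle, are never actually supplied.
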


Note that Theorem \ref{thm-monotoncity-exponential-decay}$\rm(i)$  shows the
space monotonicity of generalized traveling waves of \eqref{main-eqn} and  Theorem \ref{thm-monotoncity-exponential-decay}$\rm(ii)$ reflects the exponential decay ahead of interface. We point out that space monotonicity of generalized traveling waves in general time heterogeneous media is only known in the bistable case (see \cite{Sh06}). In the monostable case, it is true in the unique ergodic media (see \cite{Sh11}).

We then study the uniqueness of generalized traveling waves of \eqref{main-eqn} and prove

\begin{thm}\label{thm-uniqueness-introduction}
Suppose $\rm(H1)$-$\rm(H3)$. Let $v(t,x)$ be an arbitrary generalized traveling wave of \eqref{main-eqn}. Then, there exists some $\zeta_{*}\in\R$ such that $v(t,x)=u^f(t,x+\zeta_{*})$ for all $x\in\R$ and $t\in\R$.
Hence generalized traveling waves of \eqref{main-eqn} are unique up to space translations.
\end{thm}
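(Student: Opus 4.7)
The plan is to deduce uniqueness from the stability theorem \ref{thm-stability-tw} combined with the monotonicity and exponential-decay properties established in Theorem \ref{thm-monotoncity-exponential-decay}. The key observation is that at every base time $t_0$, a spatial translate of $v(t_0,\cdot)$ fits the hypotheses of the stability theorem, so $v(t,\cdot)$ is exponentially attracted to some translate of $u^f$; a steepness argument then identifies $v$ with that translate exactly.

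First I would verify, for each $t_0\in\R$, that $w_0(x):=v(t_0,x+c_0)$ with $c_0:=\xi^v(t_0)-\xi^f(t_0)$ satisfies the hypotheses of Theorem \ref{thm-stability-tw} at time $t_0$. Both $w_0$ and $u^f(t_0,\cdot)$ map $\R$ into $(0,1)$, both tend to $1$ at $-\infty$, and both have interface location $\xi^f(t_0)$. The exponential decay of $u^f$ ahead of $\xi^f$ is furnished by Proposition \ref{prop-transition-wave}(iii), while the decay of $v$ ahead of $\xi^v$ follows from Theorem \ref{thm-monotoncity-exponential-decay}(ii) combined with $\sup_t|\hat\xi^v(t)-\xi^v(t)|<\infty$. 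Putting these together with the trivial bound $|w_0-u^f(t_0,\cdot)|\leq 1$ on bounded intervals yields
$$|w_0(x)-u^f(t_0,x)|\leq C e^{-\al_{0}(x-\xi^f(t_0))},\quad x\in\R,$$
for some $C,\al_0>0$ independent of $t_0$. Theorem \ref{thm-stability-tw} and the spatial translation invariance $u(t,\cdot\,;t_0,w_0)=v(t,\cdot+c_0)$ then produce $\zeta(t_0)\in\R$ and $C(t_0),r>0$, with $r$ independent of $t_0$, such that
$$\sup_{x\in\R}|v(t,x)-u^f(t,x-\zeta(t_0))|\leq C(t_0)\,e^{-r(t-t_0)},\qquad t\geq t_0.$$

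Next I would show $\zeta(t_0)$ is in fact independent of $t_0$. For $t_0<t_1$, the triangle inequality applied to the above estimate at both base times gives
$$\sup_{x\in\R}|u^f(t,x-\zeta(t_0))-u^f(t,x-\zeta(t_1))|\longrightarrow 0\quad\text{as }t\to\infty.$$
Evaluating at $x=\xi^f(t)+\zeta(t_0)$, where the first term equals $\theta$, and invoking the uniform steepness of $u^f$ from Proposition \ref{prop-transition-wave}(ii), this forces $\zeta(t_0)=\zeta(t_1)$; call the common value $-\zeta_*$. Then, for any fixed $(t,x)$, sending $t_0\to-\infty$ in the stability estimate should yield $v(t,x)=u^f(t,x+\zeta_*)$, which is the claimed identity.

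The main obstacle lies in this last limit: one needs $C(t_0)\,e^{-r(t-t_0)}\to 0$ as $t_0\to-\infty$, which requires some control on how $C$ depends on the initial data. I would address this by tracking the dependence of the constant in Theorem \ref{thm-stability-tw} on its initial data through the proof; because the decay constants in the hypothesis produced in the first step are uniform in $t_0$, I expect the resulting $C(t_0)$ to be uniformly bounded in $t_0$, which is enough to pass to the limit and conclude $v(t,x)\equiv u^f(t,x+\zeta_*)$.
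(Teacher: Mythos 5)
Your proposal takes a genuinely different route from the paper's. The paper normalizes so that $\xi^v_\theta(0)=\xi^f(0)$, first establishes $\sup_t|\xi^v_\theta(t)-\xi^f(t)|<\infty$ (Lemma \ref{lem-interface-width-two-wave}) via forward-in-time applications of Theorem \ref{thm-aprior-estimate} from sequences $t_n\to-\infty$, then proves a one-sided comparison $u^f\le v(\cdot,\cdot-h)$ for some $h\ge0$ (Lemma \ref{lem-auxiliary-lemma}), and finally runs a sliding argument: it sets $h_*=\inf\{h:u^f\le v(\cdot,\cdot-h)\}$ and shows $h_*>0$ leads to a contradiction by proving $\inf_{\Om_m^-}(v(\cdot,\cdot-h_*)-u^f)>0$ (again via Theorem \ref{thm-aprior-estimate} and the exponential decay) and then sliding a bit further, mimicking the proof of Theorem \ref{thm-monotoncity-exponential-decay}(i). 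Your approach instead applies the stability theorem at every base time $t_0$, identifies the limiting shifts $\zeta(t_0)$, shows they coincide, and sends $t_0\to-\infty$. What the paper's route buys is that it never needs the stability constant to be uniform over $t_0$: each piece (boundedness of interface offsets, the one-sided comparison, the sliding) is self-contained, and the strong maximum principle finishes the job. What your route buys is conceptual economy: uniqueness is literally read off from exponential attraction from the infinite past.

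The place where your proposal is genuinely incomplete is exactly the step you flag. As stated, Theorem \ref{thm-stability-tw} gives $C=C(u_0)$, and the constant enters through $T=T(\al_0,\zeta_0^+-\zeta_0^-)$ in Proposition \ref{prop-stability}(i), which in turn depends on the initial sandwiching shifts in \eqref{initial-condition-two-sided}. Your claim that the sandwich gap $\zeta_0^+-\zeta_0^-$ can be chosen bounded uniformly in $t_0$ is plausible — behind the interface the generalized traveling wave converges to $1$ uniformly in time by definition while Proposition \ref{prop-transition-wave}(iii) controls $u^f$'s approach to $1$; ahead of the interface both profiles decay exponentially at rates that beat $\al=\min\{\al_0/2,c_B/8,c_0\}$; and in the middle region the uniform steepness of $u^f$ (Proposition \ref{prop-transition-wave}(ii)) absorbs the bounded pointwise discrepancy. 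But none of this is a consequence of the exponential-decay estimate you display; it is a separate, slightly delicate verification that has to be carried out. Also, a small fix: to show $\zeta(t_0)=\zeta(t_1)$ you invoke uniform steepness of $u^f$, which is only available on compact windows $[-M,M]$ around the interface, so you first need $|\zeta(t_0)-\zeta(t_1)|$ bounded; it is cleaner to use Proposition \ref{prop-transition-wave}(iii) directly, which gives a strictly positive lower bound on $\theta-u^f(t,\xi^f(t)+\Delta)$ for any $\Delta>0$, independent of $t$ and of any prior bound on $\Delta$.
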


We finally investigate the recurrence of generalized traveling waves of \eqref{main-eqn}. To this end, we further assume

\begin{itemize}

\item[\rm(H4)] {\it The family $\{f(\cdot,u),f_u(\cdot,u)\,|\,u\in\R\}$ of functions is globally uniformly H\"{o}lder continuous.}

\end{itemize}

Let
$$
H(f)={\rm cl}\{f\cdot t|t\in\R\},
$$
where $f\cdot t(\cdot,\cdot)=f(\cdot+t,\cdot)$ and the closure is taken in the open compact topology. Assume $\rm(H1)$-$\rm(H4)$. Then for any $g\in H(f)$, $\rm(H1)$-$\rm(H3)$ are also satisfied with $f$ being replaced by $g$. By Proposition \ref{prop-transition-wave} and Theorem \ref{thm-uniqueness-introduction}, for any $g\in H(f)$, there is a unique generalized traveling wave $u^{g}(t,x)$ of
\begin{equation}\label{main-eqn-1}
u_{t}=u_{xx}+g(t,x)
\end{equation}
with the continuously differentiable interface location function $\xi^{g}(t)$ at $\theta$, i.e., $u^g(t,\xi^g(t))=\theta$ for all $t\in\R$, satisfying the normalization $\xi^g(0)=0$. Setting $\psi^{g}(t,x)=u^{g}(t,x+\xi^{g}(t))$, we prove

\begin{thm}\label{thm-recurrence}
Suppose $\rm(H1)$-$\rm(H4)$. Then
\begin{equation}
\label{recurrence-thm-eq1}
\psi^g(t,\cdot)=\psi^{g\cdot t}(0,\cdot),\quad \forall \,\, t\in\R,\,\,g\in H(f),
\end{equation}
\begin{equation}
\label{recurrence-thm-eq2}
\dot \xi^g(t)=-\frac{\psi_{xx}^g(t,0)+g(t,\psi^g(t,0))}{\psi_x^g(t,0)},\quad \forall\,\,t\in\R,\,\, g\in H(f),
\end{equation}
and
\begin{equation}
\label{recurrence-thm-eq3}
\text{the mapping}\,\,[H(f)\ni g\mapsto \psi^g(0,\cdot)\in C_{\rm unif}^b(\R,\R)]\,\,\text{is continuous}.
\end{equation}
 In particular, if
$f(t,u)$ is almost periodic in $t$ uniformly with respect to $u$ in bounded sets, then so are  $\psi^f(t,x)$ and $\dot\xi^f(t)$, and the average propagation speed
\begin{equation*}
\lim_{t\to\infty}\frac{\xi^f(t)-\xi^f(0)}{t}
\end{equation*}
exists. Moreover, $\mathcal{M}(\psi^f(\cdot,\cdot))$, $\mathcal{M}(\dot\xi^f(\cdot))\subset \mathcal{M}(f(\cdot,\cdot))$,
where $\mathcal{M}(\cdot)$ denotes the frequency module of an almost periodic function.
\end{thm}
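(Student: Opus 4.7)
The plan is to deduce \eqref{recurrence-thm-eq1} and \eqref{recurrence-thm-eq2} by direct verification, prove \eqref{recurrence-thm-eq3} by a compactness argument based on Theorem \ref{thm-uniqueness-introduction} and the uniform estimates of Proposition \ref{prop-transition-wave}, and then conclude the almost-periodic assertions from continuity in $g$ combined with standard hull/skew-product arguments. For \eqref{recurrence-thm-eq1}, fix $g\in H(f)$ and $t\in\R$, and set $w(s,x)=u^g(s+t,x+\xi^g(t))$. Then $w$ is a bounded global classical solution of $w_s=w_{xx}+(g\cdot t)(s,w)$, and is a generalized traveling wave whose interface location $s\mapsto\xi^g(s+t)-\xi^g(t)$ vanishes at $s=0$. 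Theorem \ref{thm-uniqueness-introduction} applied to the equation with nonlinearity $g\cdot t$ then forces $w\equiv u^{g\cdot t}$; evaluating at $s=0$ yields $\psi^{g\cdot t}(0,x)=u^g(t,x+\xi^g(t))=\psi^g(t,x)$. The identity \eqref{recurrence-thm-eq2} follows from differentiating $u^g(t,\xi^g(t))=\theta$ in $t$ and substituting the PDE at $(t,\xi^g(t))$, using $g(t,\theta)=g(t,\psi^g(t,0))$ together with $\psi_x^g(t,0)<0$ from Theorem \ref{thm-monotoncity-exponential-decay}.

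For \eqref{recurrence-thm-eq3}, I would argue by compactness. Let $g_n\to g$ in $H(f)$. Since every $g\in H(f)$ still satisfies $f_{\inf}\le g\le f_{\sup}$ by (H2), all the estimates in Proposition \ref{prop-transition-wave} and Theorem \ref{thm-monotoncity-exponential-decay} hold with constants uniform in $g\in H(f)$; in particular $\sup_n\|\dot\xi^{g_n}\|_\infty<\infty$, and $u^{g_n}$ enjoy uniform interior parabolic estimates. Passing to a subsequence, $u^{g_n}\to u^\ast$ in $C^{1,2}_{\rm loc}(\R^2)$ and $\xi^{g_n}\to\xi^\ast$ in $C_{\rm loc}(\R)$, with $u^\ast$ a bounded classical solution of $u_t=u_{xx}+g(t,u)$. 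The uniform decay estimates pass to the limit and identify $u^\ast$ as a generalized traveling wave with interface $\xi^\ast$ satisfying $\xi^\ast(0)=0$; Theorem \ref{thm-uniqueness-introduction} then forces $u^\ast=u^g$, and hence $\xi^\ast=\xi^g$. Since every subsequential limit equals $u^g$, the full sequence converges, and the uniform-in-$n$ exponential decay of $\psi^{g_n}(0,\cdot)$ at $\pm\infty$ (from Proposition \ref{prop-transition-wave}(iii) and Theorem \ref{thm-monotoncity-exponential-decay}(ii)) upgrades $C_{\rm loc}$ convergence to convergence in $C_{\rm unif}^b(\R,\R)$.

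When $f$ is almost periodic in $t$ uniformly on bounded $u$-sets, $H(f)$ is compact and $t\mapsto f\cdot t$ is an almost periodic map $\R\to H(f)$. Composing it with the continuous map $g\mapsto\psi^g(0,\cdot)$ from \eqref{recurrence-thm-eq3} and using \eqref{recurrence-thm-eq1} gives almost periodicity of $t\mapsto\psi^f(t,\cdot)$ as a $C_{\rm unif}^b(\R,\R)$-valued function. The $C^{1,2}_{\rm loc}$ convergence established in the proof of \eqref{recurrence-thm-eq3}, combined with (H4) and the formula \eqref{recurrence-thm-eq2}, gives continuity of $g\mapsto\dot\xi^g(0)$, hence almost periodicity of $\dot\xi^f$; the existence of the average propagation speed then amounts to the existence of the mean of the almost periodic function $\dot\xi^f$, since $\frac{\xi^f(t)-\xi^f(0)}{t}=\frac{1}{t}\int_0^t\dot\xi^f(s)\,ds$. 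The module inclusions $\mathcal{M}(\psi^f),\mathcal{M}(\dot\xi^f)\subset\mathcal{M}(f)$ follow from the standard criterion: any sequence $t_n$ with $f\cdot t_n\to f$ in $H(f)$ produces, by continuity of $g\mapsto\psi^g(0,\cdot)$ and $g\mapsto\dot\xi^g(0)$, the convergences $\psi^f(\cdot+t_n,\cdot)\to\psi^f(\cdot,\cdot)$ and $\dot\xi^f(\cdot+t_n)\to\dot\xi^f(\cdot)$. The main technical obstacle is the compactness argument behind \eqref{recurrence-thm-eq3}; once this continuity is in hand, the remaining assertions follow by standard skew-product/hull arguments.
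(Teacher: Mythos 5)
Your proposal is correct and takes essentially the same approach as the paper for all four pieces: \eqref{recurrence-thm-eq1} via Theorem \ref{thm-uniqueness-introduction} applied to time-translates, \eqref{recurrence-thm-eq2} by differentiating $u^g(t,\xi^g(t))=\theta$ and substituting the PDE, \eqref{recurrence-thm-eq3} by a compactness/diagonal argument identified via uniqueness, and the almost-periodicity/module assertions via a standard hull criterion. The one place where you take a shortcut is the assertion that, because $f_{\inf}\le g\le f_{\sup}$ for all $g\in H(f)$, the constants in Proposition \ref{prop-transition-wave} and Theorem \ref{thm-monotoncity-exponential-decay} are automatically uniform in $g$; that is plausible but requires reopening the construction in [ShSh14]. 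The paper instead \emph{derives} the needed uniformity (the decay of $\psi^g$ at $\pm\infty$ uniformly in $g$, and $\sup_{g,t}|\dot\xi^g(t)|<\infty$) as Lemma \ref{recurrence-lem}(ii)--(iii), by first establishing \eqref{recurrence-thm-eq1} for $g=f\cdot\tau$ (which identifies $\psi^{f\cdot\tau}(t,x)=\psi^f(t+\tau,x)$ and hence inherits the bounds from $f$ directly) and then passing to the limit over $H(f)$; this route avoids any re-examination of the existence proof. The almost-periodicity of $\psi^f$ and $\dot\xi^f$ is packaged differently --- you compose an almost-periodic map into the compact hull with a continuous functional, whereas the paper uses the double-limit criterion of Remark \ref{almost-periodic-rk}(ii) --- but these are equivalent and both standard. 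Everything else, including the existence of the mean speed from $\frac{1}{t}\int_0^t\dot\xi^f$ and the module inclusions via Remark \ref{almost-periodic-rk}(iii), matches the paper's argument.
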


We remark that Theorem \ref{thm-recurrence} implies that the wave profile $\psi^f(t,\cdot)$ is of the same recurrence as $f(t,\cdot)$ in the sense that if $f\cdot t_n (t,u)\to f(t,u)$ as $n\to\infty$ in $H(f)$, then $\psi^f(t+t_n,x)=\psi^{f\cdot (t_n+t)}(0,x)\to \psi^{f\cdot t}(0,x)=\psi^f(t,x)$ as $n\to\infty$ in open compact topology (this is due to \eqref{recurrence-thm-eq1}). It also implies that the front propagation velocity $\dot\xi^f(t)$ is of the same recurrence as $f(t,\cdot)$
in  the sense that if $f\cdot t_n (t,u)\to f(t,u)$ as $n\to\infty$ in
$H(f)$, then $\dot\xi^f(t+t_n)\to \dot\xi^f(t)$ as $n\to\infty$ in open compact topology
 (this is due to
  \eqref{recurrence-thm-eq1} and \eqref{recurrence-thm-eq2}). Of course, if $f(t,\cdot)$ is periodic in $t$, then $\psi^{f}(t,\cdot)$ is periodic in $t$ with the same period
  as that of $f(t,\cdot)$. This fact has been obtained in \cite[Theorem 1.3(2)]{ShSh14} by means of the uniqueness of critical traveling waves.

Generalized traveling waves in time heterogeneous bistable and monostable media have been studied in the literature. In time periodic bistable media, Alikakos, Bates and Chen (see \cite{AlBaCh99}) proved the existence, stability and uniqueness of time periodic traveling waves. In the time heterogeneous media, generalized traveling waves with a time-dependent profile satisfying \eqref{eqn-time-profile} and their uniqueness and stability have been investigated by Shen (see e.g. \cite{Sh99-1,Sh99-2,Sh04,Sh06}). There are also similar results for time heterogeneous KPP equations (see e.g. \cite{NaRo12,Sh11}).

Generalized traveling waves have been proven to exist in space heterogeneous Fisher-KPP type equations (see \cite{NRRZ12,Zl12}). Very recently, Ding, Hamel and Zhao proved in \cite{DiHaZh14} the existence of small and large period pulsating fronts in space periodic bistable media. But it is far from being clear in the general space heterogeneous media of bistable type due to the wave blocking phenomenon (see \cite{LeKe00}) except the one established in \cite{NoRy09} under additional assumptions. In \cite{Na14}, Nadin introduced the critical traveling wave, and proved that critical traveling waves exist even in the bistable space heterogeneous media.

The rest of the paper is organized as follows. In Section \ref{sec-modified-interface}, we introduce the modified interface location, which shows the rightward propagation nature of the generalized traveling wave $u^f(t,x)$
of \eqref{main-eqn} and is of great technical importance. In Section \ref{sec-a prior-estimate}, we give an a priori estimate, trapping the solution with wave-like initial data between two space shifts of $u^f(t,x)$ with exponentially small corrections. In Section \ref{sec-stability}, we study the stability of $u^f(t,x)$ and prove Theorem \ref{thm-stability-tw}. Section \ref{sec-property-gtw} is devoted to two general properties of generalized traveling waves defined in Definition \ref{def-generalized-critical}. In Subsection \ref{subsection-space-mono}, space monotonicity of generalized traveling waves is studied and Theorem \ref{thm-monotoncity-exponential-decay}$\rm(i)$ is proved. In Subsection \ref{subsec-exponential-decay}, exponential decay ahead of interface of generalized traveling waves is studied and Theorem \ref{thm-monotoncity-exponential-decay}$\rm(ii)$ is proved. In  Section \ref{sec-uniqueness}, we investigate the uniqueness of generalized traveling waves and prove Theorem \ref{thm-uniqueness-introduction}. In the last section, Section \ref{sec-recurrence}, we explore the recurrence of generalized traveling waves and prove Theorem \ref{thm-recurrence}.

%%%%%%%%%%%%%%%%%%%%%%%%%%%%%%%%%%%%%%%%%%%%%%%%%%%%%%%%%%%%%%%%%%%%%%%%%%%%%%%%%%%%%%%%%%%

\section{Modified Interface Location}\label{sec-modified-interface}

In this section, we study the rightward propagation nature of the generalized traveling wave $u^{f}(t,x)$ of \eqref{main-eqn}. Throughout this section, if no confusion occurs, we will write $u^f(t,x)$ and $\xi^f(t)$ as $u(t,x)$ and $\xi(t)$, respectively. We assume $\rm(H1)$-$\rm(H3)$ in this section.

Recall $\xi:\R\ra\R$ is such that $u(t,\xi(t))=\theta$ for all $t\in\R$. It is known that the interface location $\xi(t)$ moves back and forth in general due to the time-dependence of the nonlinearity $f(t,u)$. This unpleasant fact causes many technical difficulties. To circumvent it, we modify the interface location $\xi(t)$ properly.

 Let $f_{B}$ be a continuously differentiable function satisfying
\begin{equation}\label{bistable-nonlinearity}
\begin{cases}
f_{B}(0)=0,\,\,f_{B}(u)<0\,\,\text{for}\,\,u\in(0,\theta),\\
f_{B}(u)=f_{\inf}(u)\,\,\text{for}\,\,u\in[\theta,1]\,\,\text{and}\,\,\int_{0}^{1}f_{B}(u)du>0.
\end{cases}
\end{equation}
Since $f_{\inf}(u)>0$ for $u\in(\theta,1)$, such an $f_{B}$ exists. Clearly, $f_{B}$ is of standard bistable type and $f_{B}(u)\leq f(t,u)$ for all $u\in[0,1]$ and $t\in\R$. There exist (see e.g. \cite{ArWe75,ArWe78,FiMc77}) a unique $c_{B}>0$ and a profile $\phi_{B}$ satisfying $(\phi_{B})_{x}<0$, $\phi_{B}(-\infty)=1$ and $\phi_{B}(\infty)=0$ such that $\phi_{B}(x-c_{B}t)$ and its translations are traveling waves of
\begin{equation}\label{eqn-bi-1}
u_{t}=u_{xx}+f_{B}(u).
\end{equation}

Let $f_{I}=f_{\sup}$ be as in $\rm(H2)$. Fix some $\theta_{I}\in(0,\theta)$. Then, there exist (see e.g. \cite{ArWe75,ArWe78,FiMc77}) a unique $c_{I}>0$ and a profile $\phi_{I}$ satisfying $(\phi_{I})_{x}<0$, $\phi_{I}(-\infty)=1$ and $\phi_{I}(\infty)=\theta_{I}$ such that $\phi_{I}(x-c_{I}t)$ and its translations are traveling waves of
\begin{equation}\label{eqn-ig-homogeneous193213}
u_{t}=u_{xx}+f_{I}(u).
\end{equation}
Notice that $\phi_{I}(x-c_{I}t)$ connects $\theta_{I}$ and $1$ instead of $0$ and $1$. 

The following proposition gives the expected modification of $\xi(t)$.

\begin{prop}\label{prop-interface-rightward-propagation-improved}
There exist constants $C_{\max}>0$ and $d_{\max}>0$, and a continuously differentiable function $\tilde{\xi}:\R\ra\R$ satisfying
\begin{equation*}
\frac{c_{B}}{2}\leq\dot{\tilde{\xi}}(t)\leq C_{\max},\quad t\in\R
\end{equation*}
such that
\begin{equation*}
0\leq\tilde{\xi}(t)-\xi(t)\leq d_{\max},\quad t\in\R.
\end{equation*}
\end{prop}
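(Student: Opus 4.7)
The plan is to take $\tilde\xi$ to be a mollified running supremum of $\xi$ with built-in positive linear drift. The substantive input is a one-sided linear lower bound
\begin{equation*}
\xi(t)-\xi(s)\geq c_B(t-s)-K \fora s\leq t,
\end{equation*}
for some universal $K>0$; once this is in hand, the construction of $\tilde\xi$ with the stated properties is an essentially formal manipulation.

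For this key estimate I would use $f_B$ as a sub-nonlinearity (recall $f_B\leq f$ on $[0,1]$). The uniform steepness of $u^f$ near the interface (Proposition~\ref{prop-transition-wave}(ii)) together with the uniform decay estimates of Proposition~\ref{prop-transition-wave}(iii) pin down a time-independent shape of $u^f(s,\cdot+\xi(s))$, and this allows one to find a universal $D>0$, independent of $s\in\R$, such that
\begin{equation*}
u^f(s,x)\geq \phi_B(x-\xi(s)+D),\quad x\in\R.
\end{equation*}
Because $(t,x)\mapsto \phi_B(x-\xi(s)+D-c_B(t-s))$ is then a subsolution of \eqref{main-eqn} on $[s,\infty)\times\R$ lying below $u^f(s,\cdot)$ at time $s$, the parabolic comparison principle propagates the inequality to all $t\geq s$. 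Evaluating at $x=\xi(t)$ and using the strict monotonicity of $\phi_B$ with $\phi_B(0)=\theta=u^f(t,\xi(t))$ yields the estimate with $K=D$.

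Given the estimate, I would set $\ol\xi(t):=\sup_{s\leq t}\{\xi(s)+\tfrac{c_B}{2}(t-s)\}$. Taking $s=t$ gives $\ol\xi\geq\xi$, and the estimate combined with $\tfrac{c_B}{2}\leq c_B$ yields
\begin{equation*}
\xi(s)+\tfrac{c_B}{2}(t-s)\leq \xi(t)+K-\tfrac{c_B}{2}(t-s)\leq \xi(t)+K,\quad s\leq t,
\end{equation*}
so $0\leq\ol\xi-\xi\leq K$. Shifting the supremum argument for $h>0$ gives $\ol\xi(t+h)-\ol\xi(t)\geq \tfrac{c_B}{2}h$, while the a priori Lipschitz bound $\sup_\R|\dot\xi|<\infty$ from Proposition~\ref{prop-transition-wave} makes $\ol\xi$ itself Lipschitz with constant $L:=\max(\sup_\R|\dot\xi|,\tfrac{c_B}{2})$. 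Convolving with a smooth nonnegative mollifier $\rho_\ep$ supported in $[-\ep,\ep]$ with $\int\rho_\ep=1$, and adding $L\ep$, I would take $\tilde\xi:=\ol\xi*\rho_\ep+L\ep$. This is $C^\infty$, inherits the pointwise bounds $\tfrac{c_B}{2}\leq\dot{\tilde\xi}\leq L=:C_{\max}$ under convolution, and satisfies $0\leq \tilde\xi-\xi\leq K+2L\ep=:d_{\max}$ using $|\ol\xi-\ol\xi*\rho_\ep|\leq L\ep$.

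The main technical obstacle is producing the universal shift $D$ in the bistable comparison. Uniform steepness controls a bounded neighborhood of the interface; the lower bound in Proposition~\ref{prop-transition-wave}(iii) handles $x\ll\xi(s)$ provided its decay exponent dominates the linear bistable rate at $u=1$; and taking $D$ large keeps $\phi_B(x-\xi(s)+D)$ small ahead of the interface where $u^f$ is strictly positive. If the direct pointwise comparison is obstructed by unfavorable exponent regimes, I would instead compare from below with the solution of \eqref{eqn-bi-1} started from a smoothed step of height $\theta_*$ supported on $(-\infty,\xi(s)-R_0]$, which lies below $u^f(s,\cdot)$ by Proposition~\ref{prop-transition-wave}(iii); the classical Fife--McLeod/Kanel spreading result implies this solution converges exponentially to a translate of $\phi_B$ moving at speed $c_B$, which gives the same linear lower bound for $t-s$ large, with small increments absorbed into $K$ via the global Lipschitz bound on $\xi$.
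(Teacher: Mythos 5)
Your proof is correct and takes a genuinely different, and arguably cleaner, route than the paper's. Both arguments rest on a linear rightward-propagation lower bound for $\xi$ obtained from a bistable sub-solution and Fife--McLeod convergence (this is the content of Step 1 in the paper's Lemma \ref{lem-interface-rightward-propagation}), but from there they diverge. The paper also proves an \emph{upper} propagation bound via an ignition super-solution, then for each $t_0$ builds a piecewise-linear function of slope $\tfrac{c_B}{2}$ that ``restarts'' at hitting times --- the upper bound is needed precisely to give a uniform positive gap $T_{\min}$ between consecutive hitting times, so that the corner-smoothing patches of a fixed width $\de_*$ do not overlap --- and finally passes to the whole line by Arzel\`a--Ascoli and a diagonal argument. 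You instead use only the one-sided estimate $\xi(t)-\xi(s)\geq c_B(t-s)-K$ and form the running supremum with drift $\ol\xi(t)=\sup_{s\le t}\{\xi(s)+\tfrac{c_B}{2}(t-s)\}$; this is finite and $\geq\xi$ by inspection, bounded above by $\xi+K$ directly from the estimate, Lipschitz with forward increments $\geq\tfrac{c_B}{2}h$ by the structure of the sup, and a single mollification followed by an upward shift of $L\ep$ finishes. Your route dispenses with the upper propagation estimate, the hitting-time bookkeeping, and the Arzel\`a--Ascoli/diagonal limit entirely. One small caution: the direct pointwise comparison $u^f(s,\cdot)\geq\phi_B(\cdot-\xi(s)+D)$ requires the decay exponent $c_2$ in Proposition \ref{prop-transition-wave}(iii) to dominate the decay rate of $1-\phi_B$ near $u=1$, which the proposition as stated does not guarantee; you correctly flag this, and your fallback --- starting the bistable evolution from a compactly supported step of height $\theta_*$ lying below $u^f(s,\cdot)$ and invoking Fife--McLeod convergence --- is the sound route and is essentially what the paper itself does.
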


The proof of Proposition \ref{prop-interface-rightward-propagation-improved} needs the rightward propagation estimate of $\xi(t)$, which we present now. For $\la\in(0,1)$, let $\xi_{\la}(t)$ be the interface location function of $u(t,x)$ at $\la$, that is, $u(t,\xi_{\la}(t))=\la$ for all $t\in\R$. It is well-defined by the space monotonicity of $u(t,x)$. By Proposition \ref{prop-transition-wave}, $\sup_{t\in\R}|\xi_{\la}(t)-\xi(t)|<\infty$ for all $\la\in(0,1)$.

\begin{lem}\label{lem-interface-rightward-propagation}
For any $\ep>0$, there is $t_{\ep}>0$ such that
\begin{equation*}
(c_{B}-\ep)(t-t_{0}-t_{\ep})\leq \xi(t)-\xi(t_{0})\leq (c_{I}+\ep)(t-t_{0}+t_{\ep}),\quad t\geq t_{0}.
\end{equation*}
In particular, there are $t_{B}>0$ and $t_{I}>0$ such that
\begin{equation*}
\frac{3c_{B}}{4}(t-t_{0}-t_{B})\leq \xi(t)-\xi(t_{0})\leq \frac{5c_{I}}{4}(t-t_{0}+t_{I}),\quad t\geq t_{0}.
\end{equation*}
\end{lem}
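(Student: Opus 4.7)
The plan is to prove the two inequalities separately by parabolic comparison with the classical homogeneous traveling waves $\phi_B$ and $\phi_I$ introduced above the statement. The particular case with speeds $\tfrac{3c_B}{4}$ and $\tfrac{5c_I}{4}$ is exactly the general statement with $\ep = c_B/4$ and $\ep = c_I/4$, so I focus on the two-sided bound for arbitrary $\ep>0$.

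For the lower bound, $f_B(u)\le f(t,u)$ for all $t\in\R$ and $u\in[0,1]$, so the translate $\phi_B(x-c_B(t-t_0)-\ga_0)$ is, for every $\ga_0\in\R$, a sub-solution of \eqref{main-eqn} on $[t_0,\infty)\times\R$. Using Proposition \ref{prop-transition-wave}(ii)--(iv), I would produce a constant $K>0$, independent of $t_0$, and a shift $\ga_0=\ga_0(t_0)$ with $|\ga_0-\xi(t_0)|\le K$, such that $\phi_B(\cdot-\ga_0)\le u(t_0,\cdot)$ on $\R$; this is possible because $u(t_0,\cdot+\xi(t_0))$ is bounded below by the $t_0$-independent profile $v$ of that proposition, which in turn dominates a suitable shift of $\phi_B$. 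Parabolic comparison then yields $u(t,x)\ge \phi_B(x-c_B(t-t_0)-\ga_0)$ for $t\ge t_0$, and setting $x=\xi(t)$ with $\phi_B$ strictly decreasing and $\phi_B(\phi_B^{-1}(\theta))=\theta$ gives $\xi(t)-\xi(t_0)\ge c_B(t-t_0)-C$ for a universal $C$, from which the stated lower bound follows by taking $t_\ep\ge C/(c_B-\ep)$.

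The upper bound is analogous in spirit but more delicate. Since $f(t,u)\le f_I(u)$ for $u\in[\theta,1]$, a right-shifted $\phi_I$ is a natural candidate super-solution. The difficulty is matching initial data on the left tail: as $x\to-\infty$, both $u(t_0,x)$ and $\phi_I$ tend to $1$, but $\phi_I<1$ strictly, so no pure translate of $\phi_I$ can dominate $u(t_0,\cdot)$ pointwise. To overcome this, my plan is to use a Fife--McLeod-type super-solution
\[
\bar u(t,x)=\phi_I\bigl(x-c_I(t-t_0)-\ga_1-\ep(t-t_0)\bigr)+q(t)
\]
with $\ga_1=\ga_1(t_0)$ chosen so that $|\ga_1-\xi(t_0)|$ is uniformly bounded, and $q(t)=q_0 e^{-\be_0(t-t_0)}$. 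The additional shift $-\ep(t-t_0)$ produces a positive term $-\ep(\phi_I)_x>0$ in $\bar u_t-\bar u_{xx}-f_I(\bar u)$, while assumption (H3) (i.e.\ $f_u\le-\be$ for $u\ge\theta_*$) lets the correction $q(t)$ dissipate in the region where $\bar u$ is close to $1$; together these make $\bar u$ a super-solution of the $f_I$-equation, hence of \eqref{main-eqn}, and for $q_0$ large enough one has $\bar u(t_0,\cdot)\ge u(t_0,\cdot)$ on $\R$. Comparison and evaluation at $x=\xi(t)$, using $q(t)\to 0$ exponentially and the strict monotonicity of $\phi_I$ on $[\theta_I,1]$, yield $\xi(t)-\xi(t_0)\le (c_I+\ep)(t-t_0)+C'$, which gives the upper bound after absorbing $C'$ into $t_\ep$.

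The main obstacle is the upper-bound super-solution: the mismatch between the left-tail decays of $1-u(t_0,x)$ and $1-\phi_I(x-\ga_1)$ rules out a naked comparison, and one must simultaneously dominate the initial data and keep $\bar u$ a super-solution on $[\theta_I,1]$. The shift $-\ep(t-t_0)$ and the exponentially decaying $q(t)$ are the two pieces that address these issues, and (H3) is essential to close the correction argument near $u=1$.
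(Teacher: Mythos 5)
Your lower-bound argument has a genuine gap. You claim that Proposition \ref{prop-transition-wave}(iii) lets you choose a shift $\gamma_0$ with $\phi_B(\cdot-\gamma_0)\le u(t_0,\cdot)$ on all of $\R$, on the grounds that ``$u(t_0,\cdot+\xi(t_0))$ is bounded below by the $t_0$-independent profile $v$.'' But that proposition gives $u(t,x+\xi(t))\ge v(x)$ only for $x\le 0$; for $x\ge 0$ it gives the \emph{opposite} inequality $u(t,x+\xi(t))\le v(x)$. There is no uniform lower bound on how fast $u(t_0,\cdot)$ decays ahead of the interface, while $\phi_B$ is strictly positive and decays at its own fixed exponential rate as $x\to\infty$. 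If $u^f$ happens to decay faster than $\phi_B$ in the right tail (which nothing in the stated hypotheses or in Proposition \ref{prop-transition-wave} rules out), no translate of $\phi_B$ can sit below $u(t_0,\cdot)$ pointwise, and your comparison argument never gets started.

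The paper's proof circumvents exactly this by comparing $u$ not with a translate of $\phi_B$ at time $t_0$, but with the solution $u_B(t,\cdot;\psi_*)$ of the \emph{homogeneous} bistable equation \eqref{eqn-bi-1} started from the front-like datum $\psi_*$ defined by $\psi_*(x)=v(x)$ for $x\le 0$ and $\psi_*(x)=0$ for $x>0$; the two one-sided estimates of Proposition \ref{prop-transition-wave}(iii) together give $\psi_*\le u(t_0,\cdot+\xi(t_0))$ for every $t_0$. Parabolic comparison yields $u_B(t-t_0,\cdot;\psi_*)\le u(t,\cdot+\xi(t_0))$, and then the Fife--McLeod stability theorem for bistable fronts (which applies to the front-like $\psi_*$, not only to $\phi_B$-translates) gives $u_B(t-t_0,\cdot;\psi_*)\to\phi_B(\cdot-c_B(t-t_0)-z_0)$ with an exponentially small, $t_0$-uniform error. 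That intermediate step — passing through the homogeneous flow and invoking the stability theorem — is the missing idea. Your upper-bound sketch differs from the paper's (which again passes through the homogeneous $f_I$-flow and cites a stability result for ignition fronts, rather than building a Fife--McLeod supersolution from scratch) but is structurally defensible once the details in the intermediate region $[\theta,\theta_*]$ are checked; the lower bound, however, does not go through as written.
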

\begin{proof}
We prove the lemma within three steps.

\paragraph{\textbf{Step 1}} We first construct a function $\psi_{*}$ satisfying the following properties:
\begin{equation}\label{function-special}
\begin{cases}
\psi_{*}\in[0,1]\,\,\text{is nonincreasing},\\
\psi_{*}(0)=\theta,\,\,\lim_{x\ra-\infty}\psi_{*}(x)=1,\,\,\lim_{x\ra\infty}\psi_{*}(x)=0,\\
\psi_{*}(x)\leq u(t_{0},x+\xi(t_{0}))\,\,\text{for all}\,\,x\in\R\,\,\text{and}\,\,t_{0}\in\R.
\end{cases}
\end{equation}
For $x\leq0$, define $\psi_{*}(x)=v(x)$, where $v:(-\infty,0]\ra[\theta,1)$ is given by Proposition \ref{prop-transition-wave}$\rm(iii)$. For $x>0$, let $\psi_{*}(x)=0$. Clearly, such defined $\psi_{*}$ satisfies \eqref{function-special}.

Next, fix any $t_{0}\in\R$. Let $u_{B}(t,x;\psi_{*})$ be the solution of \eqref{eqn-bi-1} with initial data $u_{B}(0,x;\psi_{*})=\psi_{*}(x)\leq u(t_{0},x+\xi(t_{0}))$ by \eqref{function-special}. Thus, time homogeneity and comparison principle ensure
\begin{equation*}
u_{B}(t-t_{0},x;\psi_{*})\leq u(t,x+\xi(t_{0})),\quad x\in\R,\,\,t\geq t_{0}.
\end{equation*}
By the stability of traveling waves of \eqref{eqn-bi-1} (see \cite[Theorem 3.1]{FiMc77}) and the conditions satisfied by $\psi_{*}$, there exist $z_{0}\in\R$, $K>0$ and $\om>0$ such that
\begin{equation*}
\sup_{x\in\R}|u_{B}(t-t_{0},x;\psi_{*})-\phi_{B}(x-c_{B}(t-t_{0})-z_{0})|\leq Ke^{-\om(t-t_{0})},\quad t\geq t_{0}.
\end{equation*}
In particular, for $t\geq t_{0}$ and $x\in\R$
\begin{equation}\label{estimate-l-bd}
u(t,x+\xi(t_{0}))\geq u_{B}(t-t_{0},x;\psi_{*})\geq\phi_{B}(x-c_{B}(t-t_{0})-z_{0})-Ke^{-\om(t-t_{0})}.
\end{equation}
Let $T_{0}>0$ be such that $Ke^{-\om T_{0}}=\frac{1-\theta}{2}$ (we may make $K$ larger so that $K>\frac{1-\theta}{2}$ if necessary) and denote by $\xi_{B}(\frac{1+\theta}{2})$ the unique point such that $\phi_{B}(\xi_{B}(\frac{1+\theta}{2}))=\frac{1+\theta}{2}$. Setting $x=c_{B}(t-t_{0})+z_{0}+\xi_{B}(\frac{1+\theta}{2})$ in \eqref{estimate-l-bd}, we find for any $t\geq t_{0}+T_{0}$
\begin{equation*}
u(t,c_{B}(t-t_{0})+z_{0}+\xi_{B}(\frac{1+\theta}{2})+\xi(t_{0}))\geq\phi_{B}(\xi_{B}(\frac{1+\theta}{2}))-Ke^{-\om T_{0}}=\theta.
\end{equation*}
Monotonicity then yields
\begin{equation}\label{propagation-result-1}
\xi(t)-\xi(t_{0})\geq c_{B}(t-t_{0})+z_{0}+\xi_{B}(\frac{1+\theta}{2}), \quad t\geq t_{0}+T_{0}.
\end{equation}

\paragraph{\textbf{Step 2}} Now, fix $\la\in(\theta,1)$. Note that choosing $\la$ closer to $1$ and $\theta_{I}$ closer to $0$, we may assume  $\la>2\theta_{I}$. Let $\psi^{*}:\R\to[\theta_{I},1]$ be a uniformly continuous and nonincreasing function satisfying $\psi^{*}(x)=1$ for $x\leq 0$ and $\psi^{*}(x)=\theta_{I}$ for $x\geq x_{0}$, where $x_{0}>0$ is fixed. Clearly, $u(t_{0},\cdot+\xi_{\theta_{I}}(t_{0}))\leq\psi^{*}$. Applying comparison principle and the stability of ignition traveling waves (see e.g. \cite{Ro94}; also see Theorem \ref{thm-aprior-estimate}), we find for $t\geq t_{0}$ and $x\in\R$
\begin{equation}\label{an-estimate-13189474924}
u(t,x+\xi_{\theta_{I}}(t_{0}))\leq u_{I}(t-t_{0},x;\psi^{*})\leq\phi_{I}(x-c_{I}(t-t_{0})-\xi_{I})+\ep_{I}e^{-\om_{I}(t-t_{0})},
\end{equation}
where $u_{I}(t,x;\psi^{*})$ is the unique solution of \eqref{eqn-ig-homogeneous193213} with $u_{I}(0,\cdot;\psi^{*})=\psi^{*}$. Let $\xi_{I}(\frac{\la}{2})$ be the unique point such that $\phi_{I}(\xi_{I}(\frac{\la}{2}))=\frac{\la}{2}$ (since $\la>2\theta_{I}$, $\xi_{I}(\frac{\la}{2})$ is well-defined) and $T>0$ be such that $\ep_{I}e^{-\om_{I}T}=\frac{\la}{2}$ (we may make $\ep_{I}$ larger so that $\ep_{I}>\frac{\la}{2}$ if necessary). Setting $x=c_{I}(t-t_{0})+\xi_{I}+\xi_{I}(\frac{\la}{2})$ in \eqref{an-estimate-13189474924}, we conclude
\begin{equation*}
u(t,c_{I}(t-t_{0})+\xi_{I}+\xi_{I}(\frac{\la}{2})+\xi_{\theta_{I}}(t_{0}))\leq\la,\quad t\geq t_{0}+T,
\end{equation*}
which leads to\begin{equation*}
\xi_{\la}(t)\leq c_{I}(t-t_{0})+\xi_{I}+\xi_{I}(\frac{\la}{2})+\xi_{\theta_{I}}(t_{0}),\quad t\geq t_{0}+T.
\end{equation*}
Setting $C:=\sup_{t_{0}\in\R}|\xi_{\la}(t_{0})-\xi_{\theta_{I}}(t_{0})|<\infty$ due to Proposition \ref{prop-transition-wave}, we conclude
\begin{equation}\label{propagation-result-1-1234}
\xi_{\la}(t)-\xi_{\la}(t_{0})\leq c_{I}(t-t_{0})+\xi_{I}+\xi_{I}(\frac{\la}{2})+C,\quad t\geq t_{0}+T.
\end{equation}

\paragraph{\textbf{Step 3}} By \eqref{propagation-result-1}, \eqref{propagation-result-1-1234} and the fact $\sup_{t\in\R}|\xi_{\la}(t)-\xi(t)|<\infty$, the lemma holds for $t\geq t_{0}+\max\{T_{0},T\}$. But for $t\in[t_{0},t_{0}+\max\{T_{0},T\}]$, the lemma is trivial, since we always have
\begin{equation*}
u_{B}(t-t_{0},x-\xi(t_{0});\psi_{*})\leq u(t,x)\leq u_{I}(t-t_{0},x-\xi_{\theta_{I}}(t_{0});\psi^{*})
\end{equation*}
and $\sup_{t_{0}\in\R}|\xi(t_{0})-\xi_{\theta_{I}}(t_{0})|<\infty$. This completes the proof.
\end{proof}

The next result is an improvement of Lemma \ref{lem-interface-rightward-propagation}.

\begin{lem}\label{lem-interface-rightward-propagation-improved}
There are $C_{\max}>0$ and $d_{\max}>0$ such that for any $t_{0}\in\R$, there exists a continuously differentiable function $\xi_{t_{0}}:[t_{0},\infty)\ra\R$ satisfying
\begin{equation*}
\frac{c_{B}}{2}\leq\dot{\xi}_{t_{0}}(t)\leq C_{\max},\quad t\geq t_{0}
\end{equation*}
such that
\begin{equation*}
0\leq\xi_{t_{0}}(t)-\xi(t)\leq d_{\max},\quad t\geq t_{0}.
\end{equation*}
Moreover, $\{\dot{\xi}_{t_{0}}\}_{t_{0}\leq0}$ is uniformly bounded and uniformly Lipschitz continuous.
\end{lem}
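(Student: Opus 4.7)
The plan is to construct $\xi_{t_0}$ as a mollified upper envelope of $\xi$ along lines of slope $c_B/2$, using the rightward-propagation lower bound of Lemma \ref{lem-interface-rightward-propagation} together with the finiteness of $M:=\sup_{t\in\R}|\dot\xi(t)|$ from Proposition \ref{prop-transition-wave}. Note that $M\geq 3c_B/4>c_B/2$, since Lemma \ref{lem-interface-rightward-propagation} forces the long-time average of $\dot\xi$ to be at least $3c_B/4$.

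For each fixed $t_0\in\R$, I would first define the Lipschitz envelope
\begin{equation*}
U_{t_0}(t):=\sup_{s\in[t_0,t]}\Bigl\{\xi(s)+\tfrac{c_B}{2}(t-s)\Bigr\},\qquad t\geq t_0,
\end{equation*}
and extend it to $t<t_0$ by $U_{t_0}(t):=\xi(t_0)+\tfrac{c_B}{2}(t-t_0)$. Choosing $s=t$ gives $U_{t_0}(t)\geq\xi(t)$. Conversely, by Lemma \ref{lem-interface-rightward-propagation} the quantity $\xi(s)-\xi(t)+\tfrac{c_B}{2}(t-s)$ is bounded by $\tfrac{3c_Bt_B}{4}-\tfrac{c_B}{4}(t-s)$, which is non-positive once $t-s\geq 3t_B$; for $t-s\leq 3t_B$ the trivial bound $|\xi(s)-\xi(t)|\leq M(t-s)$ yields $3t_B(M+\tfrac{c_B}{2})=:d_\ast$. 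Hence $0\leq U_{t_0}(t)-\xi(t)\leq d_\ast$ uniformly in $t\geq t_0$ and $t_0\in\R$. An elementary case analysis further shows that $U_{t_0}$ is $M$-Lipschitz with weak derivative a.e.\ in $[c_B/2,M]$: the lower bound $\dot U_{t_0}\geq c_B/2$ follows from the shifting identity $U_{t_0}(t)\geq U_{t_0}(t-h)+\tfrac{c_B}{2}h$; the upper bound $\dot U_{t_0}\leq M$ follows by splitting the sup according to whether the argmax at $t$ lies in $[t_0,t-h]$ (giving increment $\tfrac{c_B}{2}h$) or in $(t-h,t]$ (then bounding the relevant partial sup by $\xi(t-h)+Mh$).

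Next, fix once and for all a nonnegative even mollifier $\rho\in C_c^\infty(\R)$ with $\int\rho=1$ and $\operatorname{supp}\rho\subset[-1,1]$, and set
\begin{equation*}
\xi_{t_0}(t):=(U_{t_0}\ast\rho)(t)+M,\qquad t\in\R.
\end{equation*}
Then $\xi_{t_0}\in C^\infty$; the $M$-Lipschitz property of $U_{t_0}$ gives $|(U_{t_0}\ast\rho)(t)-U_{t_0}(t)|\leq M$, so
\begin{equation*}
\xi(t)\leq U_{t_0}(t)\leq \xi_{t_0}(t)\leq U_{t_0}(t)+2M\leq \xi(t)+d_\ast+2M.
\end{equation*}
The derivative $\dot\xi_{t_0}=\dot U_{t_0}\ast\rho$ is a convex combination of a.e.\ values of $\dot U_{t_0}$, hence lies in $[c_B/2,M]$ pointwise. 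Setting $C_{\max}:=M$ and $d_{\max}:=d_\ast+2M$ gives the main claim. The final uniform Lipschitz statement follows from $\ddot\xi_{t_0}=\dot U_{t_0}\ast\dot\rho$ being bounded in modulus by $M\,\|\dot\rho\|_{L^1(\R)}$, an estimate independent of $t_0\leq 0$.

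The main obstacle is the uniform gap bound $d_\ast$ in the second paragraph: the rightward propagation estimate of Lemma \ref{lem-interface-rightward-propagation} is only effective once $t-s\geq 3t_B$, so it must be coordinated with the trivial short-time Lipschitz bound, and the threshold $3t_B$ is chosen precisely so that the two bounds meet. Everything else—the smoothing, the slope range, the uniformity over $t_0\leq 0$—then reduces to routine convolution bookkeeping with fixed constants.
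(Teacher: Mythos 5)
Your proof is correct, but it follows a genuinely different route from the paper's. The paper builds $\xi_{t_0}$ iteratively: it tracks the hitting times $T_n(t_0)$ at which $\xi(t)$ catches up to a line of slope $c_B/2$ started at height $C_0$ above $\xi(T_{n-1}(t_0))$, producing a piecewise-linear function $\tilde\xi_{t_0}$ with slope $c_B/2$ and upward jumps of size $C_0$ at each $T_n(t_0)$, and then smooths out each jump on a short interval $(T_n(t_0)-\delta_*,T_n(t_0))$ via a fixed profile $\delta(\cdot)$. You instead write down the Lipschitz envelope $U_{t_0}(t)=\sup_{s\in[t_0,t]}\{\xi(s)+\tfrac{c_B}{2}(t-s)\}$ in closed form, verify $\dot U_{t_0}\in[c_B/2,M]$ a.e.\ and $0\le U_{t_0}-\xi\le d_*$ directly from Lemma \ref{lem-interface-rightward-propagation} and $M:=\sup|\dot\xi|<\infty$, and then mollify globally with a fixed $\rho\in C_c^\infty$ and shift up by $M$; the resulting $\xi_{t_0}$ is in fact $C^\infty$ and all constants are manifestly independent of $t_0$. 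The calculations checking $\dot U_{t_0}\ge c_B/2$ (semigroup-type inequality), $\dot U_{t_0}\le M$ (splitting the sup at $t-h$), the gap bound $d_*=3t_B(M+c_B/2)$, and the bound on $\ddot\xi_{t_0}$ all go through, so the argument is valid for $\xi=\xi^f$.

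One trade-off worth being aware of: the paper's construction uses only the propagation estimate \eqref{slowest-faster-speed} and the \emph{continuity} of $\xi$, a fact the paper emphasizes in the remark after the proof precisely because it is re-used in Lemma \ref{lem-interface-propagation-improved-general} for a general generalized traveling wave $v$, where $\xi^v$ is not a priori Lipschitz (nor even known to be continuous there). Your envelope argument leans on $M=\sup_{t}|\dot\xi(t)|<\infty$ both for the upper slope bound on $U_{t_0}$ and for the short-time gap estimate, so it proves Lemma \ref{lem-interface-rightward-propagation-improved} itself but would not transfer unchanged to the later general setting. For the statement at hand this is not a gap; it is simply a stronger hypothesis that is available here and that you use efficiently.
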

\begin{proof}
We use the following estiamte to modify $\xi(t)$
\begin{equation}\label{slowest-faster-speed}
\frac{3c_{B}}{4}(t-t_{0}-t_{B})\leq \xi(t)-\xi(t_{0})\leq \frac{5c_{I}}{4}(t-t_{0}+t_{I}),\quad t\geq t_{0},
\end{equation}
which is proven in Lemma \ref{lem-interface-rightward-propagation}.

Fix any $t_{0}\in\R$. Define
\begin{equation*}
\eta(t;t_{0})=\xi(t_{0})+C_{0}+\frac{c_{B}}{2}(t-t_{0}),\quad t\geq t_{0},
\end{equation*}
where $C_{0}>\frac{5}{4}c_{I}t_{I}$ is fixed. Clearly, $\xi(t_{0})<\eta(t_{0};t_{0})$. By \eqref{slowest-faster-speed} and continuity, $\xi(t)$ will hit $\eta(t;t_{0})$ sometime after $t_{0}$. Let $T_1(t_{0})$ be the first time that $\xi(t)$ hits $\eta(t;t_{0})$, that is, 
\begin{equation*}
T_1(t_{0})=\min\big\{t\geq t_{0}\big|\xi(t)=\eta(t;t_{0})\big\}. 
\end{equation*}
It follows that
\begin{equation*}
\xi(t)<\eta(t;t_{0})\,\,\text{for}\,\,t\in[t_{0},T_1(t_{0}))\quad\text{and}\quad\xi(T_1(t_{0}))=\eta(T_1(t_{0});t_{0}).
\end{equation*}
As a simple consequence of \eqref{slowest-faster-speed}, we obtain $T_1(t_{0})-t_{0}\in[T_{\min},T_{\max}]$, where $0<T_{\min}<T_{\max}<\infty$ depend only on $c_{B}$, $t_{B}$, $c_{I}$ and $t_{I}$. In fact, we can take
\begin{equation*}
T_{\min}=\frac{4C_{0}-5c_{I}t_{I}}{5c_{I}-c_{B}}\quad\text{and}\quad T_{max}=\frac{4C_{0}+3c_{B}t_{B}}{c_{B}}.
\end{equation*}

Now, at the moment $T_1(t_{0})$, we define
\begin{equation*}
\eta(t;T_1(t_{0}))=\xi(T_1(t_{0}))+C_{0}+\frac{c_{B}}{2}(t-T_1(t_{0})),\quad t\geq T_1(t_{0}).
\end{equation*}
Similarly, $\xi(T_1(t_{0}))<\eta(T_1(t_{0});T_1(t_{0}))$ and $\xi(t)$ will hit $\eta(t;T_1(t_{0}))$ sometime after $T_1(t_{0})$. Denote by $T_{2}(t_{0})$ the first time that $\xi(t)$ hits $\eta(t;T_1(t_{0}))$. Then,
\begin{equation*}
\xi(t)<\eta(t;T_1(t_{0}))\,\,\text{for}\,\,t\in[T_1(t_{0}),T_{2}(t_{0}))\quad\text{and}\quad\xi(T_{2}(t_{0}))=\eta(T_{2}(t_{0});T_1(t_{0})),
\end{equation*}
and $T_{2}(t_{0})-T_1(t_{0})\in[T_{\min},T_{\max}]$ by \eqref{slowest-faster-speed}.

Repeating the above arguments, we obtain the following: there is a sequence $\{T_{n-1}(t_{0})\}_{n\in\N}$ satisfying $T_0(t_{0})=t_0$,
\begin{equation*}
T_{n}(t_{0})-T_{n-1}(t_{0})\in[T_{\min},T_{\max}]\quad \text{for all}\,\,n\in\N,
\end{equation*}
and  for any $n\in\N$
\begin{equation*}%\label{iteration-result}
\begin{split}
&\xi(t)<\eta(t;T_{n-1}(t_{0}))\,\,\text{for}\,\,t\in[T_{n-1}(t_{0}),T_{n}(t_{0}))\quad\text{and}\\
&\xi(T_{n}(t_{0}))=\eta(T_{n}(t_{0});T_{n-1}(t_{0})),
\end{split}
\end{equation*}
where
\begin{equation*}
\eta(t;T_{n-1}(t_{0}))=\xi(T_{n-1}(t_{0}))+C_{0}+\frac{c_{B}}{2}(t-T_{n-1}(t_{0})).
\end{equation*}
Moreover, for any $n\in\N$ and $t\in[T_{n-1}(t_{0}),T_{n}(t_{0})]$
\begin{equation*}
\begin{split}
&\eta(t;T_{n-1}(t_{0}))-\xi(t)\\
&\quad\quad\leq\xi(T_{n-1}(t_{0}))+C_{0}+\frac{c_{B}}{2}(t-T_{n-1}(t_{0}))-\bigg(\xi(T_{n-1}(t_{0}))+\frac{3}{4}c_{B}(t-T_{n-1}(t_{0})-t_{B})\bigg)\\
&\quad\quad=C_{0}+\frac{3}{4}c_{B}t_{B}-\frac{1}{4}c_{B}(t-T_{n-1}(t_{0}))\leq C_{0}+\frac{3}{4}c_{B}t_{B}.
\end{split}
\end{equation*}

\begin{figure}%[!b]
\begin{center}
\includegraphics{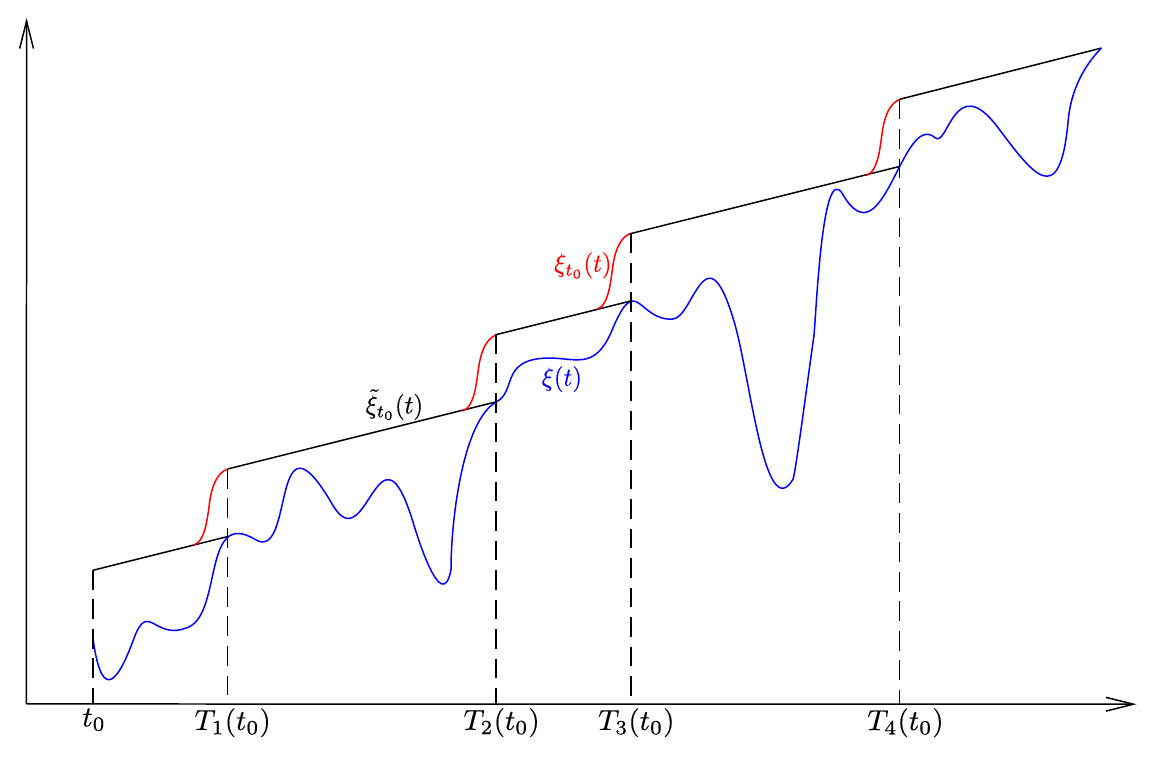}
\end{center}
\caption{\small Modified Interface Location}
\label{graph-modified-interface}
\end{figure}

Now, define $\tilde{\xi}_{t_{0}}:[t_{0},\infty)\ra\R$ by setting
\begin{equation}\label{definition-new-fun}
\tilde{\xi}_{t_{0}}(t)=\eta(t;T_{n-1}(t_{0})),\quad t\in[T_{n-1}(t_{0}),T_{n}(t_{0})),\,\,n\in\N.
\end{equation}
Since $[t_{0},\infty)=\cup_{n\in\N}[T_{n-1}(t_{0}),T_{n}(t_{0}))$, $\tilde{\xi}_{t_{0}}(t)$ is well-defined for all $t\geq t_{0}$ (see Figure \ref{graph-modified-interface} for the illustration). Notice $\tilde{\xi}_{t_{0}}(t)$ is strictly increasing and is linear on $[T_{n-1}(t_{0}),T_{n}(t_{0}))$ with slope $\frac{c_{B}}{2}$ for each $n\in\N$, and satisfies
\begin{equation*}
0\leq\tilde{\xi}_{t_{0}}(t)-\xi(t)\leq C_{0}+\frac{3}{4}c_{B}t_{B},\quad t\geq t_{0}.
\end{equation*}

Finally, we can modify $\tilde{\xi}_{t_{0}}(t)$ near each $T_{n}(t_{0})$ for $n\in\N$ to get $\xi_{t_{0}}(t)$ as in the statement of the lemma. In fact, fix some $\de_{*}\in(0,\frac{T_{\min}}{2})$. We modify $\tilde{\xi}_{t_{0}}(t)$ by redefining it on the intervals $(T_{n}(t_{0})-\de_{*},T_{n}(t_{0}))$, $n\in\N$ as follows: define
\begin{equation*}
\xi_{t_{0}}(t)=\begin{cases}
\tilde{\xi}_{t_{0}}(t),\quad t\in[t_{0},\infty)\bs\cup_{n\in\N}(T_{n}(t_{0})-\de_{*},T_{n}(t_{0})),\\
\xi(T_{n}(t_{0}))+\de(t-T_{n}(t_{0})),\quad t\in(T_{n}(t_{0})-\de_{*},T_{n}(t_{0})),\,\,n\in\N,
\end{cases}
\end{equation*}
where $\de:[-\de_{*},0]\ra[-\frac{c_{B}}{2}\de_{*},1]$ is continuously differentiable and satisfies
\begin{equation*}
\begin{split}
&\de(-\de_{*})=-\frac{c_{B}}{2}\de_{*},\quad\de(0)=1,\\
&\dot{\de}(-\de_{*})=\frac{c_{B}}{2}=\dot{\de}(0)\quad\text{and}\quad\dot{\de}(t)\geq\frac{c_{B}}{2}\,\,\text{for}\,\,t\in(-\de_{*},0).
\end{split}
\end{equation*}
Note the existence of such a function $\de(t)$ is clear. We point out that such a modification is independent of $t_{0}\in\R$ and $n\in\N$. Moreover, there exists some $C_{\max}=C_{\max}(\de_{*})>0$ such that $\dot{\de}(t)\leq C_{\max}$ for $t\in(-\de_{*},0)$. It's easy to see that $\xi_{t_{0}}(t)$ satisfies all required properties. This completes the proof.
\end{proof}

We remark that here we only need the function $\de(t)$ to be continuously differentiable. But $\de(t)$ can be obviously made to be at least twice continuously differentiable. Moreover, in proving Lemma \ref{lem-interface-rightward-propagation-improved}, we only used \eqref{slowest-faster-speed} and the continuity of $\xi(t)$. These observations will be useful later in Lemma \ref{lem-interface-propagation-improved-general}.

Proposition \ref{prop-interface-rightward-propagation-improved} now is a simple consequence of Lemma \ref{lem-interface-rightward-propagation-improved}.

\begin{proof}[Proof of Proposition \ref{prop-interface-rightward-propagation-improved}]
It follows from Lemma \ref{lem-interface-rightward-propagation-improved}, the fact that $\xi(t)$ remains bounded within any finite time interval, Arzel\`{a}-Ascoli theorem and the diagonal argument. In fact, we first see that the sequence of functions $\{\xi_{t_{0}}\}_{t_{0}\leq0}$ converges locally uniformly to some continuous function $\tilde{\xi}$ along some subsequence as $t_{0}\ra-\infty$. For the continuous differentiability, we note that $\{\dot{\xi}_{t_{0}}\}_{t_{0}\leq0}$ is uniformly bounded and uniformly Lipschitz continuous, and thus converges locally uniformly to some continuous function $\zeta$. It then follows that $\dot{\tilde{\xi}}=\zeta$, that is, $\xi$ is continuously differentiable. Other properties of $\tilde{\xi}$ stated in the proposition follow from the properties of the sequence $\{\xi_{t_{0}}\}_{t_{0}\leq0}$ as in Lemma \ref{lem-interface-rightward-propagation-improved}.
\end{proof}

%%%%%%%%%%%%%%%%%%%%%%%%%%%%%%%%%%%%%%%%%%%%%%%%%%%%%%%%%%%%%%%

\section{A priori Estimates}\label{sec-a prior-estimate}

In this section, we give an a priori estimate, trapping the solution with wave-like initial data between two space shifts of the generalized traveling wave $u^f(t,x)$ of \eqref{main-eqn} with exponentially small corrections. Throughout this section, if no confusion occurs, we will also write $u^f(t,x)$ and $\xi^f(t)$ as $u(t,x)$ and $\xi(t)$, respectively.

Let $\al_{0}>0$. Fix an initial data $u_0\in C_{\rm unif}^b(\R,\R)$ satisfying
\begin{equation}\label{initial-data}
\begin{cases}
u_{0}:\R\ra[0,1],\quad u_{0}(-\infty)=1,\cr
\exists\,\,t_{0}\in\R\,\,\text{s.t.}\,\,|u_{0}(x)-u(t_{0},x)|\leq Ce^{-\al_{0}(x-\xi(t_{0}))}\,\,\text{for}\,\,x\in\R\,\,\text{for some}\,\,C>0
\end{cases}
\end{equation}
as in the statement of Theorem \ref{thm-stability-tw}. We will show that the solution of \eqref{main-eqn} with initial data $u_{0}$ is trapped between two space shifts of $u(t,x)$ with exponentially small corrections. Before stating the main result, let us fix some parameters.

Let $L_{0}>0$ be such that for any $t\in\R$
\begin{equation}\label{condition-L}
\begin{split}
u(t,x)\geq\frac{1+\theta_{*}}{2}\quad&\text{if}\quad x\leq\xi(t)-\frac{L_{0}}{2}\quad\text{and}\\
\quad u(t,x)\leq\frac{\theta}{2}\quad&\text{if}\quad x\geq\xi(t)+\frac{L_{0}}{2},
\end{split}
\end{equation}
where $\theta_{*}$ is as in $\rm(H3)$. Such an $L_{0}$ exists by Proposition \ref{prop-transition-wave}$\rm(iii)$. Let $\Ga:=\Ga_{\al}:\R\ra[0,1]$ be a smooth function satisfying
\begin{equation}\label{function-Ga}
\begin{split}
&\sup_{x\in\R}\Ga'(x)\le 0,\quad C_{\Ga}:=\sup_{x\in\R}|\Ga''(x)|<\infty\quad\text{and}\\
&\Ga(x)=\begin{cases}
1,\,\,&x\leq -L_{0}-1,\\
e^{-\al(x-L_{0})},\,\,&x\geq L_{0}+1,
\end{cases}
\end{split}
\end{equation}
where $\al=\al(\al_{0}):=\min\{\frac{\al_{0}}{2},\frac{c_{B}}{8},c_{0}\}$ and $c_{B}>0$ is the unique speed of traveling waves of \eqref{eqn-bi-1} and $c_{0}$ is as in Proposition \ref{prop-transition-wave}. By Proposition \ref{prop-transition-wave}$\rm(ii)$, there exists $C_{L_{0}}>0$ such that
\begin{equation}\label{estimate-steepness-super-sub}
u_{x}(t,x)\leq -C_{L_{0}}\quad {\rm for} \,\,\, |x-\xi(t)|\leq L_{0}+1+d_{\max},\,\,t\in\R,
\end{equation}
where $d_{\max}>0$ is as in Proposition \ref{prop-interface-rightward-propagation-improved}. Set
\begin{equation}\label{constant-B}
M=\frac{2C_{\rm Lip}+C_{\Ga}}{C_{L_{0}}},
\end{equation}
where $C_{\rm Lip}>0$ is the Lipschitz constant for $f(t,u)$, that is,
\begin{equation*}
C_{\rm Lip}=\sup_{t\in\R}\sup_{u,v\in[0,1];u\neq v}\frac{|f(t,u)-f(t,v)|}{|u-v|}.
\end{equation*}

We also need
\begin{equation}\label{constant-omega}
\om=\om(\al_{0}):=\min\bigg\{\beta,\frac{\al c_{B}}{4}-\al^{2},C_{\rm Lip}\bigg\},
\end{equation}
where $\beta>0$ is as in $\rm(H3)$. By the choice of $\al$, $\frac{\al c_{B}}{2}-\al^{2}>0$.

Due to condition \eqref{initial-data} and the fact that $u(t_{0},x)$ is strictly decreasing in $x$ by Proposition \ref{prop-transition-wave}$\rm(i)$, for any
\begin{equation}\label{constant-ep}
\ep\in(0,\ep_{0}],\quad\text{ where}\,\,\ep_{0}=\min\bigg\{\frac{\theta}{2},\frac{1-\theta_{*}}{2},\frac{c_{B}}{4M}\bigg\},
\end{equation}
we can find two shifts $\zeta^{-}_{0}<\zeta^{+}_{0}$ (depending only on $\ep$ and $u_{0}$) such that
\begin{equation}\label{initial-condition-two-sided}
\begin{split}
u(t_{0},x-\zeta^{-}_{0})-\ep\Ga(x-\tilde{\xi}(t_{0})-\zeta^{-}_{0})\leq u_{0}(x)\leq u(t_{0},x-\zeta^{+}_{0})+\ep\Ga(x-\tilde{\xi}(t_{0})-\zeta^{+}_{0}).
\end{split}
\end{equation}
Note that we used $\tilde{\xi}(t_{0})$ here instead of $\xi(t_{0})$. Proposition \ref{prop-interface-rightward-propagation-improved} allows us to do so. Moreover, by making $\zeta_{0}^{-}$ smaller and $\zeta_{0}^{+}$ larger, we may assume, without loss of generality, that
\begin{equation}\label{initial-shift-gap-ep}
\zeta_{0}^{+}-\zeta_{0}^{-}\geq\ep.
\end{equation}

Now, we are ready to state and prove the main result in this section. Recall that  $u(t,x;t_{0},u_0)$ is the solution of \eqref{main-eqn} with initial data $u(t_{0},x;t_{0},u_0)=u_{0}(x)$.

\begin{thm}\label{thm-aprior-estimate}
Suppose $\rm(H1)$-$\rm(H3)$. Let $t_{0}\in\R$. For any $\ep\in(0,\ep_{0}]$, there are shifts
\begin{equation*}
\zeta^{-}_{1}=\zeta^{-}_{0}-\frac{M\ep}{\om}\quad\text{and}\quad \zeta^{+}_{1}=\zeta^{+}_{0}+\frac{M\ep}{\om}
\end{equation*}
such that
\begin{equation*}
u(t,x-\zeta^{-}_{1})-q(t)\Ga(x-\tilde{\xi}(t)-\zeta^{-}_{1})\leq u(t,x;t_{0}.u_0)\leq u(t,x-\zeta^{+}_{1})+q(t)\Ga(x-\tilde{\xi}(t)-\zeta^{+}_{1})
\end{equation*}
for all $x\in\R$ and $t\geq t_{0}$, where $q(t)=\ep e^{-\om(t-t_{0})}$ and $\tilde{\xi}:\R\ra\R$ is as in Proposition \ref{prop-interface-rightward-propagation-improved}.
\end{thm}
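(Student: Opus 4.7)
The plan is to construct explicit super- and sub-solutions of the form
\[
w^\pm(t,x) = u^f\bigl(t,x-\zeta^\pm(t)\bigr) \pm q(t)\,\Ga\bigl(x-\tilde{\xi}(t)-\zeta^\pm(t)\bigr),
\]
with $q(t)=\ep e^{-\om(t-t_0)}$ and time-dependent shifts
\[
\zeta^\pm(t) = \zeta_0^\pm \pm \frac{M\ep}{\om}\bigl(1-e^{-\om(t-t_0)}\bigr),
\]
so that $\zeta^\pm(t_0)=\zeta_0^\pm$, $\zeta^\pm(t)\to\zeta_1^\pm$ as $t\to\infty$, $\dot\zeta^\pm(t)=\pm Mq(t)$, and $q'(t)=-\om q(t)$; then invoke the parabolic comparison principle on $\R$. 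The initial sandwiching $w^-(t_0,\cdot)\le u_0 \le w^+(t_0,\cdot)$ is immediate from \eqref{initial-condition-two-sided}.

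The core of the work is verifying the differential inequality $\LL w^+ := w^+_t - w^+_{xx} - f(t,w^+) \ge 0$ (and the symmetric $\LL w^- \le 0$). Using $u^f_t = u^f_{xx} + f(t,u^f)$, one obtains
\[
\LL w^+ = -\dot\zeta^+ u^f_x + q'\Ga - q\Ga'(\dot{\tilde\xi}+\dot\zeta^+) - q\Ga'' - \bigl[f(t,u^f+q\Ga) - f(t,u^f)\bigr],
\]
which I would verify case by case on $\eta := x - \tilde{\xi}(t) - \zeta^+(t)$. In the left tail $\eta\le -L_0-1$, $\Ga\equiv 1$ and $\Ga'=\Ga''=0$; \eqref{condition-L} and $\ep\le(1-\theta_*)/2$ keep both $u^f$ and $u^f+q$ in $[\theta_*,1]$, so (H3) gives $f(t,u^f+q)-f(t,u^f)\le-\beta q$, and combined with $-\dot\zeta^+u^f_x\ge 0$ and $\om\le\beta$ this yields $\LL w^+\ge q(\beta-\om)\ge 0$. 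In the right tail $\eta\ge L_0+1$, $\Ga$ is pure exponential ($\Ga'=-\al\Ga$, $\Ga''=\al^2\Ga$), and $\ep\le\theta/2$ forces $u^f,\,u^f+q\Ga\in[0,\theta]$, killing the $f$-terms; the surviving expression $-\dot\zeta^+u^f_x + q\Ga[\al\dot{\tilde\xi}+\al\dot\zeta^+-\om-\al^2]$ is nonnegative from $u^f_x\le 0$, $\dot{\tilde\xi}\ge c_B/2$, and $\om\le \al c_B/4-\al^2$. In the interface region $|\eta|\le L_0+1$, Proposition~\ref{prop-interface-rightward-propagation-improved} locates $x-\zeta^+(t)-\xi(t) = \eta+(\tilde\xi(t)-\xi(t))$ inside $[-L_0-1,\,L_0+1+d_{\max}]$, so \eqref{estimate-steepness-super-sub} yields $-u^f_x\ge C_{L_0}$; $\Ga'\le 0$ together with $\dot{\tilde\xi}+\dot\zeta^+>0$ (since $Mq\le c_B/4$ by $\ep\le\ep_0$) renders $-q\Ga'(\dot{\tilde\xi}+\dot\zeta^+)$ nonnegative, and the Lipschitz bound $|f(t,u^f+q\Ga)-f(t,u^f)|\le C_{\rm Lip}q$ together with $|\Ga''|\le C_\Ga$ and the calibration $M=(2C_{\rm Lip}+C_\Ga)/C_{L_0}$ from \eqref{constant-B} gives $\LL w^+\ge q[C_{\rm Lip}-\om]\ge 0$.

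The analogue $\LL w^- \le 0$ is checked symmetrically, using that $f(t,\cdot)\equiv 0$ on $(-\infty,\theta]$ so any negativity of $u^f-q\Ga$ in the right tail causes no trouble. Parabolic comparison on $\R$ for bounded solutions then yields $w^-(t,x)\le u(t,x;t_0,u_0)\le w^+(t,x)$ for all $x\in\R$ and $t\ge t_0$, after which monotonicity of $u^f(t,\cdot)$ and $\Ga$ together with $\zeta^+(t)\le\zeta_1^+$ and $\zeta^-(t)\ge\zeta_1^-$ let one replace $\zeta^\pm(t)$ by the asymptotic shifts $\zeta_1^\pm$ in the final statement. The principal obstacle is the interface-region case: the constants $M$, $\om$, $\al$, and $\ep_0$ in \eqref{function-Ga}--\eqref{constant-ep} are calibrated precisely so that the positive contribution from the uniform steepness $|u^f_x|\ge C_{L_0}$ dominates every negative contribution (from $\Ga''$, from the Lipschitz perturbation of $f$, and from the decay rate $\om$); verifying this balance while simultaneously preserving the regions of validity for (H3) and for $f\equiv 0$ ahead of the interface is where the bulk of the technical effort lies.
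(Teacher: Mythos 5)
Your proposal is correct and takes essentially the same route as the paper: the same super/sub-solution ansatz $u^f(t,x-\zeta^\pm(t)) \pm q(t)\Ga(x-\tilde{\xi}(t)-\zeta^\pm(t))$ with $\dot\zeta^\pm = \pm Mq$, the same three-region case analysis on $\eta = x-\tilde\xi(t)-\zeta^\pm(t)$, and the same calibration of $\al$, $\om$, $M$, $\ep_0$ to dominate the $\Ga''$-term, the Lipschitz perturbation, and the decay rate in each region. The only cosmetic difference is that you isolate the constraint $Mq \le c_B/4$ in the interface case, whereas the paper invokes it in the sub-solution right-tail estimate; both are needed and both are supplied by $\ep \le \ep_0$.
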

\begin{proof}
The idea of proof is to construct appropriate super-solution and sub-solution of \eqref{main-eqn} with initial data at time $t_{0}$ satisfying the second and the first estimate in \eqref{initial-condition-two-sided}, respectively.

Let us start with the super-solution. Define for $t\geq t_{0}$
\begin{equation*}
u^{+}(t,x;t_{0})=u(t,x-\zeta^{+}(t))+q(t)\Ga(x-\tilde{\xi}(t)-\zeta^{+}(t)),
\end{equation*}
where
\begin{equation*}
q(t)=\ep e^{-\om(t-t_{0})}\quad\text{and}\quad\zeta^{+}(t)=\zeta_{0}^{+}+\frac{M\ep}{\om}(1-e^{-\om(t-t_{0})}).
\end{equation*}
We show that $u^{+}$ is a super-solution of \eqref{main-eqn}, that is, $u^{+}_{t}\geq u^{+}_{xx}+f(t,u^{+})$. We consider three cases.

\textbf{Case 1.} $x-\tilde{\xi}(t)-\zeta^{+}(t)\leq-L_{0}-1$. In this case, $\Ga(x-\tilde{\xi}(t)-\zeta^{+}(t))=1$ by the definition of $\Ga$, and thus
\begin{equation*}
u^{+}(t,x;t_{0})=u(t,x-\zeta^{+}(t))+q(t).
\end{equation*}
Moreover, by Proposition \ref{prop-interface-rightward-propagation-improved}, 
\begin{equation*}
x-\zeta^{+}(t)\leq\tilde{\xi}(t)-L_{0}-1\leq\xi(t)-L_{0}-1+d_{\max}\leq\xi(t)-\frac{L_{0}}{2}
\end{equation*}
(making $L_{0}$ larger if necessary), which implies $u^{+}(t,x;t_{0})\geq u(t,x-\zeta^{+}(t))\geq\frac{1+\theta_{*}}{2}$ by \eqref{condition-L}, and hence
\begin{equation}\label{estimate-1-case-1}
f(t,u(t,x-\zeta^{+}(t)))-f(t,u^{+}(t,x;t_{0}))\geq \beta q(t)
\end{equation}
by $\rm(H3)$. We compute
\begin{equation*}
\begin{split}
u^{+}_{t}-u^{+}_{xx}-f(t,u^{+})&=u_{t}-\dot{\zeta}^{+}(t)u_{x}+\dot{q}(t)-u_{xx}-f(t,u^{+})\\
&=f(t,u)-f(t,u^{+})-Mq(t)u_{x}-\om q(t)\\
&\geq\beta q(t)-\om q(t)\geq0,
\end{split}
\end{equation*}
where we used \eqref{estimate-1-case-1}, the fact $u_{x}<0$ by Proposition \ref{prop-transition-wave}$\rm(i)$ and \eqref{constant-omega}.

\textbf{Case 2.} $x-\tilde{\xi}(t)-\zeta^{+}(t)\geq L_{0}+1$. In this case,
\begin{equation*}
\Ga(x-\tilde{\xi}(t)-\zeta^{+}(t))=e^{-\al(x-\tilde{\xi}(t)-\zeta^{+}(t)-L_{0})},
\end{equation*} 
and hence,
\begin{equation*}
u^{+}(t,x;t_{0})=u(t,x-\zeta^{+}(t))+q(t)e^{-\al(x-\tilde{\xi}(t)-\zeta^{+}(t)-L_{0})}.
\end{equation*}
Moreover, by Proposition \ref{prop-interface-rightward-propagation-improved},
$x-\zeta^{+}(t)\geq\tilde{\xi}(t)+L_{0}+1\geq\xi(t)+L_{0}+1$, which leads to $u(t,x-\zeta^{+}(t))\leq\frac{\theta}{2}$ by \eqref{condition-L}, and hence, $f(t,u(t,x-\zeta^{+}(t)))$=0. Also, by \eqref{constant-ep}, $u^{+}(t,x;t_{0})\leq u(t,x-\zeta^{+}(t))+\ep\leq\theta$, which yields $f(t,u^{+}(t,x;t_{0}))=0$. We compute
\begin{equation*}
\begin{split}
&u^{+}_{t}-u^{+}_{xx}-f(t,u^{+})\\
&\quad\quad=u_{t}-\dot{\zeta}^{+}(t)u_{x}+\Big[\dot{q}(t)+\al q(t)\Big(\dot{\tilde{\xi}}(t)+\dot{\zeta}^{+}(t)\Big)\Big]e^{-\al(x-\tilde{\xi}(t)-\zeta^{+}(t)-L_{0})}\\
&\quad\quad\quad-u_{xx}-\al^{2}q(t)e^{-\al(x-\tilde{\xi}(t)-\zeta^{+}(t)-L_{0})}-f(t,u^{+})\\
&\quad\quad=-\dot{\zeta}^{+}(t)u_{x}+\Big[\dot{q}(t)+\al q(t)\Big(\dot{\tilde{\xi}}(t)+\dot{\zeta}^{+}(t)\Big)-\al^{2}q(t)\Big]e^{-\al(x-\tilde{\xi}(t)-\zeta^{+}(t)-L_{0})}\\
&\quad\quad\geq0,
\end{split}
\end{equation*}
since $-\dot{\zeta}^{+}(t)u_{x}\geq0$ and, due to \eqref{constant-omega},
\begin{equation*}
\begin{split}
\dot{q}(t)+\al q(t)\Big(\dot{\tilde{\xi}}(t)+\dot{\zeta}^{+}(t)\Big)-\al^{2}q(t)\geq\Big[-\om+\frac{\al c_{B}}{2}+\al Mq(t)-\al^{2}\Big]q(t)\geq0.
\end{split}
\end{equation*}

\textbf{Case 3.} $x-\tilde{\xi}(t)-\zeta^{+}(t)\in[-L_{0}-1, L_{0}+1]$. In this case,
\begin{equation*}
x-\zeta^{+}(t)-\xi(t)=x-\zeta^{+}(t)-\tilde{\xi}(t)+\tilde{\xi}(t)-\xi(t)\in[-L_{0}-1,L_{0}+1+d_{\max}]
\end{equation*}
by Proposition \ref{prop-interface-rightward-propagation-improved}. It then follows from \eqref{estimate-steepness-super-sub} that
\begin{equation}\label{estimate-steepness-super}
u_{x}(t,x-\zeta^{+}(t))<-C_{L_{0}}.
\end{equation}
We compute
\begin{equation*}
\begin{split}
u^{+}_{t}-u^{+}_{xx}-f(t,u^{+})&=u_{t}-\dot{\zeta}^{+}(t)u_{x}+\dot{q}(t)\Ga(x-\tilde{\xi}(t)-\zeta^{+}(t))\\
&\quad-q(t)\big[\dot{\tilde{\xi}}(t)+\dot{\zeta}^{+}(t)\big]\Ga_{x}(x-\tilde{\xi}(t)-\zeta^{+}(t))\\
&\quad-u_{xx}-q(t)\Ga_{xx}(x-\xi_{t_{0}}(t)-\zeta^{+}(t))-f(t,u^{+})\\
&\geq f(t,u)-f(t,u^{+})-\dot{\zeta}^{+}(t)u_{x}-\om q(t)-C_{\Ga}q(t),
\end{split}
\end{equation*}
where we used $\Ga(x-\tilde{\xi}(t)-\zeta^{+}(t))\leq1$, $q(t)\big[\dot{\tilde{\xi}}(t)+\dot{\zeta}^{+}(t)\big]\Ga_{x}(x-\tilde{\xi}(t)-\zeta^{+}(t))\leq0$ and $\Ga_{xx}(x-\tilde{\xi}(t)-\zeta^{+}(t))-f(t,u^{+})\leq C_{\Ga}$. By the Lipschitz continuity and \eqref{estimate-steepness-super}, we deduce
\begin{equation*}
u^{+}_{t}-u^{+}_{xx}-f(t,u^{+})\geq\big(-C_{\rm Lip}+MC_{L_{0}}-\om-C_{\Ga}\big)q(t)\geq0
\end{equation*}
by \eqref{constant-B} and \eqref{constant-omega}.

Hence, \textbf{Case 1}, \textbf{Case 2} and \textbf{Case 3} imply $u^{+}_{t}\geq u^{+}_{xx}+f(t,u^{+})$, i.e., $u^{+}(t,x;t_{0})$ is a super-solution of $\eqref{main-eqn}$. It then follows from
the second inequality in \eqref{initial-condition-two-sided} and the comparison principle that
\begin{equation*}
\begin{split}
u(t,x;t_{0},u_0)\leq u^{+}(t,x;t_{0})&=u(t,x-\zeta^{+}(t))+q(t)\Ga(x-\tilde{\xi}(t)-\zeta^{+}(t))\\
&\leq u(t,x-\zeta^{+}_{1})+q(t)\Ga(x-\tilde{\xi}(t)-\zeta^{+}_{1}),
\end{split}
\end{equation*}
where the last inequality follows from the facts that $u(t,x)$ and $\Ga(x)$ are decreasing in $x$, and $\zeta^{+}(t)$ is strictly increasing and converges to $\zeta^{+}_{1}$ as $t\ra\infty$. This proves half of the theorem.

We now construct a sub-solution of \eqref{main-eqn} to prove the remaining half. Define for $t\geq t_{0}$
\begin{equation*}
u^{-}(t,x;t_{0})=u(t,x-\zeta^{-}(t))-q(t)\Ga(x-\tilde{\xi}(t)-\zeta^{-}(t)),
\end{equation*}
where
\begin{equation*}
\zeta^{-}(t)=\zeta_{0}^{-}-\frac{M\ep}{\om}(1-e^{-\om(t-t_{0})}).
\end{equation*}
We show that $u^{-}$ is a sub-solution of \eqref{main-eqn}, that is, $u^{-}_{t}\leq u^{-}_{xx}+f(t,u^{-})$. We consider three cases.

\textbf{Case I.} $x-\tilde{\xi}(t)-\zeta^{-}(t)\leq-L_{0}-1$. In this case, $\Ga(x-\tilde{\xi}(t)-\zeta^{-}(t))=1$ by the definition of $\Ga$, and thus
\begin{equation*}
u^{-}(t,x;t_{0})=u(t,x-\zeta^{-}(t))-q(t).
\end{equation*}
Moreover, by Proposition \ref{prop-interface-rightward-propagation-improved}, 
\begin{equation*}
x-\zeta^{-}(t)\leq\tilde{\xi}(t)-L_{0}-1\leq\xi(t)-L_{0}-1+d_{\max}\leq\xi(t)-\frac{L_{0}}{2} 
\end{equation*}
(making $L_{0}$ larger if necessary), which implies $u(t,x-\zeta^{-}(t))\geq\frac{1+\theta_{*}}{2}$ by \eqref{condition-L}. Also, $u^{-}(t,x;t_{0})=u(t,x-\zeta^{-}(t))-q(t)\geq\frac{1+\theta_{*}}{2}-\ep\geq\theta_{*}$ by \eqref{constant-ep}. It then follows from $\rm(H3)$ that
\begin{equation}\label{estimate-1-case-I}
f(t,u(t,x-\zeta^{-}(t)))-f(t,u^{-}(t,x;t_{0}))\leq -\beta q(t).
\end{equation}
We compute
\begin{equation*}
\begin{split}
u^{-}_{t}-u^{-}_{xx}-f(t,u^{-})&=u_{t}-\dot{\zeta}^{-}(t)u_{x}-\dot{q}(t)-u_{xx}-f(t,u^{-})\\
&=f(t,u)-f(t,u^{-})+Mq(t)u_{x}+\om q(t)\\
&\leq-\beta q(t)+\om q(t)\leq0,
\end{split}
\end{equation*}
where we used \eqref{estimate-1-case-I}, the fact $u_{x}<0$ by Proposition \ref{prop-transition-wave}$\rm(i)$ and \eqref{constant-omega}.

\textbf{Case II.} $x-\tilde{\xi}(t)-\zeta^{-}(t)\geq L_{0}+1$. In this case,
\begin{equation*}
\Ga(x-\tilde{\xi}(t)-\zeta^{-}(t))=e^{-\al(x-\tilde{\xi}(t)-\zeta^{-}(t)-L_{0})}, 
\end{equation*}
and hence,
\begin{equation*}
u^{-}(t,x;t_{0})=u(t,x-\zeta^{-}(t))-q(t)e^{-\al(x-\tilde{\xi}(t)-\zeta^{-}(t)-L_{0})}.
\end{equation*}
Moreover, by Proposition \ref{prop-interface-rightward-propagation-improved}, $x-\zeta^{-}(t)\geq\tilde{\xi}(t)+L_{0}+1\geq\xi(t)+L_{0}+1$,
which leads to $u(t,x-\zeta^{-}(t))\leq\frac{\theta}{2}$ by \eqref{condition-L}, and hence, $f(t,u(t,x-\zeta^{-}(t)))$=0. Clearly, $f(t,u^{-}(t,x;t_{0}))=0$. We compute
\begin{equation*}
\begin{split}
&u^{-}_{t}-u^{-}_{xx}-f(t,u^{-})\\
&\quad\quad=u_{t}-\dot{\zeta}^{-}(t)u_{x}-\Big[\dot{q}(t)+\al q(t)\Big(\dot{\tilde{\xi}}(t)+\dot{\zeta}^{-}(t)\Big)\Big]e^{-\al(x-\tilde{\xi}(t)-\zeta^{-}(t)-L_{0})}\\
&\quad\quad\quad-u_{xx}+\al^{2}q(t)e^{-\al(x-\tilde{\xi}(t)-\zeta^{-}(t)-L_{0})}-f(t,u^{-})\\
&\quad\quad=-\dot{\zeta}^{-}(t)u_{x}-\Big[\dot{q}(t)+\al q(t)\Big(\dot{\tilde{\xi}}(t)+\dot{\zeta}^{-}(t)\Big)-\al^{2}q(t)\Big]e^{-\al(x-\tilde{\xi}(t)-\zeta^{-}(t)-L_{0})}\\
&\quad\quad\leq0,
\end{split}
\end{equation*}
since $-\dot{\zeta}^{-}(t)u_{x}\leq0$ and, due to \eqref{constant-omega},
\begin{equation*}
\begin{split}
\dot{q}(t)+\al q(t)\Big(\dot{\tilde{\xi}}(t)+\dot{\zeta}^{-}(t)\Big)-\al^{2}q(t)\geq\Big[-\om+\frac{\al c_{B}}{2}-\frac{\al c_{B}}{4}-\al^{2}\Big]q(t)\geq0.
\end{split}
\end{equation*}

\textbf{Case III.} $x-\tilde{\xi}(t)-\zeta^{-}(t)\in[-L_{0}-1, L_{0}+1]$. In this case,
\begin{equation*}
x-\zeta^{-}(t)-\xi(t)=x-\zeta^{-}(t)-\tilde{\xi}(t)+\tilde{\xi}(t)-\xi(t)\in[-L_{0}-1,L_{0}+1+d_{\max}]
\end{equation*}
by Proposition \ref{prop-interface-rightward-propagation-improved}. It then follows from \eqref{estimate-steepness-super-sub} that
\begin{equation}\label{estimate-steepness-super-II}
u_{x}(t,x-\zeta^{-}(t))<-C_{L_{0}}.
\end{equation}
We compute
\begin{equation*}
\begin{split}
u^{-}_{t}-u^{-}_{xx}-f(t,u^{-})&=u_{t}-\dot{\zeta}^{-}(t)u_{x}-\dot{q}(t)\Ga(x-\tilde{\xi}(t)-\zeta^{-}(t))\\
&\quad+q(t)\big[\dot{\tilde{\xi}}(t)+\dot{\zeta}^{-}(t)\big]\Ga_{x}(x-\tilde{\xi}(t)-\zeta^{-}(t))\\
&\quad-u_{xx}+q(t)\Ga_{xx}(x-\tilde{\xi}(t)-\zeta^{-}(t))-f(t,u^{-})\\
&\leq f(t,u)-f(t,u^{-})-\dot{\zeta}^{-}(t)u_{x}+\om q(t)+C_{\Ga}q(t),
\end{split}
\end{equation*}
where we used $\Ga(x-\tilde{\xi}(t)-\zeta^{-}(t))\leq1$, $\Ga_{xx}(x-\tilde{\xi}(t)-\zeta^{+}(t))-f(t,u^{+})\leq C_{\Ga}$ and
\begin{equation*}
q(t)\big[\dot{\tilde{\xi}}(t)+\dot{\zeta}^{-}(t)\big]\Ga_{x}(x-\tilde{\xi}(t)-\zeta^{-}(t))\leq q(t)\big[\frac{c_{B}}{2}-Mq(t)\big]\Ga_{x}(x-\tilde{\xi}(t)-\zeta^{-}(t))\leq0
\end{equation*}
by \eqref{constant-ep}. By the Lipschitz continuity and \eqref{estimate-steepness-super-II}, we deduce
\begin{equation*}
u^{-}_{t}-u^{-}_{xx}-f(t,u^{-})\leq\big(C_{\rm Lip}-MC_{L_{0}}+\om+C_{\Ga}\big)q(t)\leq0
\end{equation*}
by \eqref{constant-B} and \eqref{constant-omega}.

Hence, \textbf{Case I}, \textbf{Case II} and \textbf{Case III} imply $u^{-}_{t}\leq u^{-}_{xx}+f(t,u^{-})$, i.e., $u^{-}(t,x;t_{0})$ is a sub-solution of $\eqref{main-eqn}$. It then follows from
the first inequality in \eqref{initial-condition-two-sided} and the comparison principle that
\begin{equation*}
\begin{split}
u(t,x;t_{0},u_0)\geq u^{-}(t,x;t_{0})&=u(t,x-\zeta^{-}(t))+q(t)\Ga(x-\tilde{\xi}(t)-\zeta^{-}(t))\\
&\geq u(t,x-\zeta^{-}_{1})+q(t)\Ga(x-\tilde{\xi}(t)-\zeta^{-}_{1}),
\end{split}
\end{equation*}
where the last inequality follows from the facts that $u(t,x)$ and $\Ga(x)$ are decreasing in $x$, and $\zeta^{-}(t)$ is strictly decreasing and converges to $\zeta^{-}_{1}$ as $t\ra\infty$. This completes the proof.
\end{proof}

We end up this section with a remark concerning Theorem \ref{thm-aprior-estimate}.

\begin{rem}\label{rem-thm-aprior-estimate}
\begin{itemize}
\item[\rm(i)] Theorem \ref{thm-aprior-estimate} is not tailored for the initial data $u_{0}$. All we need in the proof of Theorem \ref{thm-aprior-estimate} is the initial two-sided estimate \eqref{initial-condition-two-sided} for $u_{0}$. Hence, if initially we have the estimate in the form of \eqref{initial-condition-two-sided}, we will be able to apply Theorem \ref{thm-aprior-estimate}. This observation is helpful in the following sections.

\item[\rm(ii)] From the proof of Theorem \ref{thm-aprior-estimate}, the lower bound and the upper bound for $u(t,x;t_{0},u_0)$ in Theorem \ref{thm-aprior-estimate} are independent: the lower bound and the upper bound for $u(t,x;t_{0},u_0)$ depend only on the lower bound and the upper bound for $u_{0}$ in \eqref{initial-condition-two-sided}, respectively.
\end{itemize}
\end{rem}

%%%%%%%%%%%%%%%%%%%%%%%%%%%%%%%%%%%%%%%%%%%%%%%%%%%

\section{Stability of Generalized Traveling Waves}\label{sec-stability}

In this section, we study the stability of the generalized traveling wave $u^f(t,x)$ of \eqref{main-eqn} in Proposition \ref{prop-transition-wave} and prove Theorem \ref{thm-stability-tw}. Throughout this section, we still write $u^f(t,x)$ and $\xi^f(t)$ as $u(t,x)$ and $\xi(t)$, respectively.

The proof of Theorem \ref{thm-stability-tw} is based on the following lemma, which is the time heterogeneous version of \cite[Proposition 2.2]{MNRR09}, where the space heterogeneous nonlinearity is treated.

\begin{prop}\label{prop-stability}
Suppose $\rm(H1)$-$\rm(H3)$. Fix $u_0$ satisfying \eqref{initial-data}.
 Let $\zeta_0^{+}$ and $\zeta_0^{-}$ be as in \eqref{initial-condition-two-sided}.
\begin{itemize}
\item[\rm(i)] There exist a time $T=T(\al_{0},\zeta_{0}^{+}-\zeta_{0}^{-})>0$,  two shifts $\zeta^{-}_{T}<\zeta_{T}^{+}$ with $\zeta_{T}^{+}-\zeta^{-}_{T}\leq1$,
and $q_T$ with $0\le q_T\le \min\{\ep_{0},\zeta_{0}^{+}-\zeta_{0}^{-}\}$ such that
\begin{equation*}
\begin{split}
&u(t_{0}+T,x-\zeta^{-}_{T})-q_{T}\Ga(x-\tilde{\xi}(t_{0}+T)-\zeta^{-}_{T})\\
&\leq u(t_{0}+T,x;t_{0},u_0)\leq u(t_{0}+T,x-\zeta^{+}_{T})+q_{T}\Ga(x-\tilde{\xi}(t_{0}+T)-\zeta^{+}_{T})
\end{split}
\end{equation*}
for all $x\in\R$.

\item[\rm(ii)] There are constants $\tilde{T}=\tilde{T}(\al_{0})>0$, $K=K(\al_{0})>0$ and $\ga=\ga(\al_{0})\in(0,1)$, and three sequences $\{\zeta_{n}^{-}\}_{n\in\N}$, $\{\zeta_{n}^{+}\}_{n\in\N}$ and $\{q_{n}\}_{n\in\N}$ satisfying
\begin{equation}\label{estimate-prop-stab-shifts}
\begin{split}
&0\leq q_{n}\leq\min\{\ep_{0},\zeta_{n}^{+}-\zeta_{n}^{-}\}\leq\zeta_{n}^{+}-\zeta_{n}^{-}\leq K\ga^{n}\quad\text{and}\\
&\zeta_{n}^{-}-K(\zeta_{n}^{+}-\zeta_{n}^{-})\leq\zeta_{n+1}^{-}\leq\zeta_{n+1}^{+}\leq\zeta_{n}^{+}+K(\zeta_{n}^{+}-\zeta_{n}^{-})
\end{split}
\end{equation}
for all $n\in\N$ such that for any $n\in\N$ there holds
\begin{equation}\label{estimate-prop-stab-main}
u(t,x-\zeta^{-}_{n})-q_{n}e^{-\om(t-t_{n})}\leq u(t,x;t_{0},u_0)\leq u(t,x-\zeta^{+}_{n})+q_{n}e^{-\om(t-t_{n})}
\end{equation}
for all $x\in\R$ and $t\geq t_{n}=t_{0}+T+n\tilde{T}$.
\end{itemize}
\end{prop}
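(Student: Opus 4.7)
The plan is to deduce both parts from Theorem \ref{thm-aprior-estimate} by a squeezing argument in the spirit of \cite{FiMc80,MNRR09}, adapted to the time-heterogeneous setting via the modified interface $\tilde{\xi}$ from Proposition \ref{prop-interface-rightward-propagation-improved}. Part (i) performs the one-shot reduction of the shift gap down to unit size, and part (ii) then iterates to produce the geometric decay.

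For part (i), I would begin by applying Theorem \ref{thm-aprior-estimate} to the initial datum $u_0$ with the shifts $\zeta_0^\pm$ from \eqref{initial-condition-two-sided}; this gives the two-sided trap
\[
u(t,x-\zeta_1^-)-q(t)\Ga(x-\tilde{\xi}(t)-\zeta_1^-)\le u(t,x;t_0,u_0)\le u(t,x-\zeta_1^+)+q(t)\Ga(x-\tilde{\xi}(t)-\zeta_1^+)
\]
for all $t\ge t_0$, with $\zeta_1^\pm=\zeta_0^\pm\pm M\ep/\om$ and $q(t)=\ep e^{-\om(t-t_0)}$. The two nonnegative defect functions $w_\pm(t,x)$ measuring the slack in these inequalities satisfy linear parabolic inequalities, obtained by absorbing $f(t,u(t,x-\zeta_1^\pm))-f(t,u(t,x;t_0,u_0))$ into a zeroth-order term via the Lipschitz continuity of $f$. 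In the frame of $\tilde{\xi}(t)$, which is strictly rightward-propagating by Proposition \ref{prop-interface-rightward-propagation-improved}, the strong parabolic maximum principle produces, after a waiting time $T=T(\al_0,\zeta_0^+-\zeta_0^-)$, a definite pointwise lower bound $w_\pm(t_0+T,\cdot)\ge \de>0$ on a compact window around the interface. By the uniform steepness of $u^f$ on bounded sets (Proposition \ref{prop-transition-wave}(ii)), this excess converts into strict shift improvements $\zeta_T^+<\zeta_1^+$ and $\zeta_T^->\zeta_1^-$. Choosing $T$ large enough forces $\zeta_T^+-\zeta_T^-\le 1$ and $q_T\le\min\{\ep_0,\zeta_0^+-\zeta_0^-\}$ after absorbing the residual correction $q(t_0+T)$ into $q_T$.

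For part (ii), I would iterate the procedure of (i). The conclusion of part (i) at time $t_0+T$ is itself of the form of the two-sided initial bound \eqref{initial-condition-two-sided}, now with $\ep$ replaced by $q_T$ and shifts of gap at most $1$; by Remark \ref{rem-thm-aprior-estimate}(i), Theorem \ref{thm-aprior-estimate} re-applies with this datum and the squeezing argument of (i) runs again. In this new regime the gap is already uniformly bounded by $1$, so the waiting time can be taken as a uniform $\tilde{T}=\tilde{T}(\al_0)$ and the contraction factor $\ga\in(0,1)$ becomes uniform, giving $\zeta_{n+1}^+-\zeta_{n+1}^-\le\ga(\zeta_n^+-\zeta_n^-)$ and hence $\zeta_n^+-\zeta_n^-\le K\ga^n$ for some $K=K(\al_0)$. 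The two-sided exponential decay \eqref{estimate-prop-stab-main} at each level then follows from one more application of Theorem \ref{thm-aprior-estimate}, and the control $\zeta_{n+1}^\pm\in[\zeta_n^--K(\zeta_n^+-\zeta_n^-),\zeta_n^++K(\zeta_n^+-\zeta_n^-)]$ in \eqref{estimate-prop-stab-shifts} simply records the total shift adjustment accumulated at each squeezing step.

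The hard part is the uniformity of the contraction factor $\ga$ and the waiting time $\tilde{T}$ across iterations. Two ingredients are essential: the uniform steepness of $u^f$ on compact sets (Proposition \ref{prop-transition-wave}(ii)), which guarantees that a fixed pointwise excess always translates into a quantitative shift improvement of comparable size; and the monotone rightward propagation of $\tilde{\xi}$, without which the strong maximum principle argument could not be sustained in a moving frame, since $\xi^f$ itself generally moves back and forth in time-heterogeneous media. It is precisely here that Proposition \ref{prop-interface-rightward-propagation-improved} plays a central, not merely technical, role: it is what converts the heterogeneous problem into something amenable to the same squeezing analysis that succeeds in the homogeneous and space-heterogeneous cases.
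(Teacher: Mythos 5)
Your roadmap is right at a high level: apply Theorem \ref{thm-aprior-estimate}, contract the shift gap, re-apply, and then note for part (ii) that once the gap is bounded by $1$ all constants depend only on $\al_0$. But the core contraction mechanism, as you describe it, does not go through, and this is a genuine gap rather than a matter of detail. You claim the strong parabolic maximum principle yields, after a waiting time, a pointwise lower bound $w_\pm(t_0+T,\cdot)\ge\de>0$ for \emph{both} defect functions. The strong maximum principle is only qualitative; by itself it cannot produce a lower bound proportional to the current gap $\zeta_n^+-\zeta_n^-$, and without that proportionality the resulting shift improvement is additive rather than multiplicative, so the geometric decay $\zeta_n^+-\zeta_n^-\le K\ga^n$ does not follow. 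The paper's argument uses instead: (a) Taylor expansion combined with the uniform steepness of $u^f$ in a fixed compact window $\Om_{3R}$ to show the \emph{sum} of the two slacks is bounded below by $K(\zeta_n^+-\zeta_n^-)$; (b) a dichotomy — at some intermediate time, at least one of the two slacks (not necessarily both) must exceed $\tfrac{K}{2}(\zeta_n^+-\zeta_n^-)$ at a point of $\Om_{3R}$; and (c) the parabolic Harnack inequality applied to that one slack to upgrade the pointwise excess into a uniform lower bound over a later space-time slab. The dichotomy selects which of the two shifts gets improved; Harnack supplies the quantitative contraction factor $dr_0$. Neither step appears in your proposal.

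There is a second missing piece: after the shift improves inside $\Om_R$, one must still control the defect in the regions behind and ahead of the (modified) interface before Theorem \ref{thm-aprior-estimate} can be re-applied with the improved shift. Behind the interface the paper compares against a time-exponentially decaying barrier using $\rm(H3)$ (that $f_u\le -\beta$ for $u\ge\theta_*$); ahead of the interface it uses that $f(t,\cdot)$ vanishes below $\theta$ so the defect solves the pure heat equation, and compares with a moving exponential barrier that exploits precisely the lower bound $\dot{\tilde\xi}\ge c_B/2$ from Proposition \ref{prop-interface-rightward-propagation-improved}. Without these, the improved shift is only known in a compact window, which does not reconstitute the global two-sided estimate of the form \eqref{initial-condition-two-sided} needed to iterate. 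Your point that Proposition \ref{prop-interface-rightward-propagation-improved} is essential is correct, but its role is in the super/sub-solution construction of Theorem \ref{thm-aprior-estimate} and in the ahead-of-interface comparison, not in making the strong maximum principle work in a moving frame.
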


Assuming Proposition \ref{prop-stability}, let us prove Theorem \ref{thm-stability-tw}.

\begin{proof}[Proof of Theorem \ref{thm-stability-tw}]
We see from \eqref{estimate-prop-stab-shifts} that $\lim_{n\ra\infty}\zeta_{n}^{-}=\zeta_{*}=\lim_{n\ra\infty}\zeta_{n}^{+}$ for some $\zeta_{*}\in\R$, and there is some $C=C(\al_{0})$ such that $|\zeta_{n}^{-}-\zeta_{*}|\leq C\ga^{n}$ and $|\zeta_{n}^{+}-\zeta_{*}|\leq C\ga^{n}$ for all $n\in\N$. Also, from $0\leq q_{n}\leq K\ga^{n}$ and the estimate \eqref{estimate-prop-stab-main}, we have
\begin{equation}\label{an-auxiliary-estimate-stability}
u(t,x-\zeta^{-}_{n})-K\ga^{n}e^{-\om(t-t_{n})}\leq u(t,x;t_{0},u_0)\leq u(t,x-\zeta^{+}_{n})+K\ga^{n}e^{-\om(t-t_{n})}
\end{equation}
for $t\geq t_{n}$. Note that using $|\zeta_{n}^{+}-\zeta_{*}|\leq C\ga^{n}$ and Proposition \ref{prop-transition-wave}, we deduce
\begin{equation*}
u(t,x-\zeta_{n}^{+})\leq u(t,x-\zeta_{*}-C\ga^{n})\leq u(t,x-\zeta_{*})+\tilde C\ga^{n},
\end{equation*}
where $\tilde C=C\cdot\sup_{t\in\R,x\in\R}\{-u_x(t,x)\}$. Similarly, we have
\begin{equation*}
u(t,x-\zeta_{n}^{-})\geq u(t,x-\zeta_{*}+C\ga^{n})\geq u(t,x-\zeta_{*})-\tilde C\ga^{n}.
\end{equation*}
It then follows from \eqref{an-auxiliary-estimate-stability} that
\begin{equation*}
u(t,x-\zeta_{*})-\ga^{n}(\tilde{C}+Ke^{-\om(t-t_{n})})\leq u(t,x;t_{0},u_0)\leq u(t,x-\zeta_{*})+\ga^{n}(\tilde{C}+Ke^{-\om(t-t_{n})})
\end{equation*}
for $t\geq t_{n}$. In particular, for $t\geq t_{0}+T$, there holds
\begin{equation*}
|u(t,x;t_{0},u_0)-u(t,x-\zeta_{*})|\leq \bar Ce^{-r(t-t_{0}-T)}
\end{equation*}
for some $\bar{C}=\bar{C}(\al_{0})>0$ and  $r=r(\al_{0})>0$. The result then follows.
\end{proof}

The rest of this section is devoted to the proof of Proposition \ref{prop-stability}. If no confusion occurs, we write $u(t,x;t_0,u_0)$ as $u(t,x;t_0)$ in the rest of this section.

\begin{proof}[Proof of Proposition \ref{prop-stability}]
$\rm(i)$ Let $\zeta_{1}^{-}$ and $\zeta_{1}^{+}$ be as in Theorem \ref{thm-aprior-estimate}. By Theorem  \ref{thm-aprior-estimate}, we have
\begin{equation*}
\begin{split}
&u(t,x-\zeta^{-}_{1})-\ep e^{-\om(t-t_{0})}\Ga(x-\tilde{\xi}(t)-\zeta^{-}_{1})\\
&\leq u(t,x;t_{0})\leq u(t,x-\zeta^{+}_{1})+\ep e^{-\om(t-t_{0})}\Ga(x-\tilde{\xi}(t)-\zeta^{+}_{1})
\end{split}
\end{equation*}
for all $x\in\R$ and $t\geq t_{0}$. Let $s_{1}>0$ (to be chosen at the end of the proof). Since $e^{-\om(t-t_{0})}\leq e^{-\om(t-t_{0}-s_{1})}$, we have
\begin{equation*}
\begin{split}
&u(t,x-\zeta^{-}_{1})-q_{1}e^{-\om(t-t_{0}-s_{1})}\Ga(x-\tilde{\xi}(t)-\zeta^{-}_{1})\\
&\leq u(t,x;t_{0})\leq u(t,x-\zeta^{+}_{1})+q_{1}e^{-\om(t-t_{0}-s_{1})}\Ga(x-\tilde{\xi}(t)-\zeta^{+}_{1})
\end{split}
\end{equation*}
for all $x\in\R$ and $t\geq t_{0}+s_{1}$, where $q_{1}=\ep\leq\zeta_{0}^{+}-\zeta_{0}^{-}\leq\zeta_{1}^{+}-\zeta_{1}^{-}$ by \eqref{initial-shift-gap-ep}. We remark that the above estimate holds for an arbitrary $s_{1}>0$.

We now show that there are  constants $C>0$ and $\ga_{0}=\ga_{0}(\al_{0},\zeta_{1}^{+}-\zeta_{1}^{-})\in(0,1)$ such that for any $n\geq 2$ there holds
\begin{itemize}
\item[\rm$\textbf{Claim}_{n}$] for $s_{n}=n s_{1}$ there holds
\begin{equation*}
\begin{split}
&u(t,x-\zeta^{-}_{n})-q_{n}e^{-\om(t-t_{0}-s_{n})}\Ga(x-\tilde{\xi}(t)-\zeta^{-}_{n})\\
&\leq u(t,x;t_{0})\leq u(t,x-\zeta^{+}_{n})+q_{n}e^{-\om(t-t_{0}-s_{n})}\Ga(x-\tilde{\xi}(t)-\zeta^{+}_{n})
\end{split}
\end{equation*}
for all $t\geq t_{0}+s_{n}$, where
\begin{equation*}
\begin{split}
\zeta_{1}^{-}-1\leq \zeta_{n}^{-}\leq \zeta_{n}^{+}\leq \zeta_{1}^{+}+1\quad\text{and}\quad 0\leq q_{n}\leq\zeta_{n}^{+}-\zeta_{n}^{-}\leq C\ga_{0}^{n}.
\end{split}
\end{equation*}
\end{itemize}

We prove this by induction. Assuming $\textbf{Claim}_{n}$, we verify $\textbf{Claim}_{n+1}$. Set
\begin{equation*}
R=R(\zeta_{1}^{+}-\zeta_{1}^{-})=2\bigg(d_{\max}+\frac{L_{0}}{2}+1+\frac{\zeta_{1}^{+}-\zeta_{1}^{-}}{2}\bigg).
\end{equation*}
The reason for such a choice is that the set
\begin{equation*}
\Om_{R}=\bigg\{(t,x)\in\R\times\R\bigg|\Big|x-\tilde{\xi}(t)-\frac{\zeta_{1}^{-}+\zeta_{1}^{+}}{2}\Big|\leq R\bigg\}
\end{equation*}
contains the set $\big\{(t,x)\in\R\times\R\big|u(t,x-(\zeta_{1}^{-}-1))\leq\frac{1+\theta_{*}}{2}\,\,\text{and}\,\, u(t,x-(\zeta_{1}^{+}+1))\geq\frac{\theta}{2}\big\}$.

Let us consider $u(t,x-\zeta_{n}^{-})$ and $u(t,x-\zeta_{n}^{+})$ for $(t,x)\in\Om_{3R}$. By Taylor expansion
\begin{equation*}
u(t,x-\zeta_{n}^{+})-u(t,x-\zeta_{n}^{-})=-u_{x}(t,x-\zeta_{n}^{*})(\zeta_{n}^{+}-\zeta_{n}^{-}),
\end{equation*}
where $\zeta_{n}^{*}\in[\zeta_{n}^{-},\zeta_{n}^{+}]\subset[\zeta_{1}^{-}-1,\zeta_{1}^{+}+1]$ by induction assumption.
For $(t,x)\in\Om_{3R}$, we easily check that $|x-\zeta_{n}^{*}-\xi(t)|$ is bounded by some constant depending only on $\zeta_{1}^{+}-\zeta_{1}^{-}$, which together with Proposition \ref{prop-transition-wave}$\rm(ii)$, ensures the existence of some $K=K(\zeta_{1}^{+}-\zeta_{1}^{-})>0$ such that $u_{x}(t,x-\zeta_{n}^{*})\leq-K$. It then follows that
\begin{equation}\label{estimate-om-3R}
u(t,x-\zeta_{n}^{+})-u(t,x-\zeta_{n}^{-})\geq K(\zeta_{n}^{+}-\zeta_{n}^{-}),\quad (t,x)\in\Om_{3R}.
\end{equation}

Set $\tilde{s}_{n}=s_{n}+\tilde{s}$ with $\tilde{s}=\frac{\ln p}{\om}$ for some $p=p(\al_{0},\zeta_{1}^{+}-\zeta_{1}^{-})$ large (to be chosen). Let $\ep_{n}=\frac{\zeta_{n}^{+}-\zeta_{n}^{-}}{pK}$. We deduce from the induction assumption that if $t\geq t_{0}+\tilde{s}_{n}$ and $(t,x)\in\Omega_{3R}$, then
\begin{equation*}
\begin{split}
u(t,x;t_{0})\geq u(t,x-\zeta_{n}^{-})-\frac{\zeta_{n}^{+}-\zeta_{n}^{-}}{p}=u(t,x-\zeta_{n}^{-})-\ep_{n}K,\\
u(t,x;t_{0})\leq u(t,x-\zeta_{n}^{+})+\frac{\zeta_{n}^{+}-\zeta_{n}^{-}}{p}=u(t,x-\zeta_{n}^{+})+\ep_{n}K.
\end{split}
\end{equation*}
Moreover, for $(t,x)\in\Om_{3R}$, we see
\begin{equation*}
\begin{split}
&u(t,x-\zeta_{n}^{+}-\ep_{n})-u(t,x-\zeta_{n}^{+})\geq K\ep_{n}\quad\text{and}\\
&u(t,x-\zeta_{n}^{-}+\ep_{n})-u(t,x-\zeta_{n}^{-})\leq-K\ep_{n}
\end{split}
\end{equation*}
provided $p$ is sufficiently large so that $\ep_{n}$ is small. Hence, for $(t,x)\in\Om_{3R}$ and $t\geq t_{0}+\tilde{s}_{n}$ there hold
\begin{equation}\label{positivity-estimate-harnack}
u(t,x;t_{0})\geq u(t,x-\zeta_{n}^{-}+\ep_{n})\quad\text{and}\quad u(t,x;t_{0})\leq u(t,x-\zeta_{n}^{+}-\ep_{n}).
\end{equation}
Using the estimates \eqref{estimate-om-3R} and \eqref{positivity-estimate-harnack}, and the monotonicity of $u(t,x)$ in $x$, we obtain
\begin{equation*}
\big[u(t,x-\zeta_{n}^{+}-\ep_{n})-u(t,x;t_{0})\big]+\big[u(t,x;t_{0})-u(t,x-\zeta_{n}^{-}+\ep_{n})\big]\geq K(\zeta_{n}^{+}-\zeta_{n}^{-})
\end{equation*}
for $(t,x)\in\Om_{3R}$ and $t\geq t_{0}+\tilde{s}_{n}$. In particular, at the moment $t=t_{0}+\tilde{s}_{n}+\frac{\si}{2}$ for some $\si=\si(\al_{0},\zeta_{1}^{+}-\zeta_{1}^{-})>0$ to be chosen, we must have either
\begin{equation}\label{one-in-two-1}
\sup_{\Om_{3R}\cap(\{t\}\times\R)}\big[u(t,x-\zeta_{n}^{+}-\ep_{n})-u(t,x;t_{0})\big]\geq\frac{K}{2}(\zeta_{n}^{+}-\zeta_{n}^{-})
\end{equation}
or
\begin{equation}\label{one-in-two-2}
\sup_{\Om_{3R}\cap(\{t\}\times\R)}\big[u(t,x;t_{0})-u(t,x-\zeta_{n}^{-}+\ep_{n})\big]\geq\frac{K}{2}(\zeta_{n}^{+}-\zeta_{n}^{-}).
\end{equation}
Due to \eqref{positivity-estimate-harnack}, we can apply Harnack inequality to both $u(t,x-\zeta_{n}^{+}-\ep_{n})-u(t,x;t_{0})$ and $u(t,x;t_{0})-u(t,x-\zeta_{n}^{-}+\ep_{n})$. As a result, there is $p_{0}=p_{0}(\si,\tau)=p_{0}(\al_{0},\zeta_{1}^{+}-\zeta_{1}^{-})>0$, where $\tau=\tau(\al_{0},\zeta_{1}^{+}-\zeta_{1}^{-})>0$ is to be chosen, such that
\begin{equation}\label{one-in-two-1-harnack}
\inf_{\Om_{R}\cap([t_{0}+\tilde{s}_{n}+\si,t_{0}+\tilde{s}_{n}+\si+\tau]\times\R)}\big[u(t,x-\zeta_{n}^{+}-\ep_{n})-u(t,x;t_{0})\big]\geq\frac{p_{0}K}{2}(\zeta_{n}^{+}-\zeta_{n}^{-})
\end{equation}
if \eqref{one-in-two-1} holds, and
\begin{equation}\label{one-in-two-2-harnack}
\inf_{\Om_{R}\cap([t_{0}+\tilde{s}_{n}+\si,t_{0}+\tilde{s}_{n}+\si+\tau]\times\R)}\big[u(t,x;t_{0})-u(t,x-\zeta_{n}^{-}+\ep_{n})\big]\geq\frac{p_{0}K}{2}(\zeta_{n}^{+}-\zeta_{n}^{-})
\end{equation}
if \eqref{one-in-two-2} holds.

From now on, we assume \eqref{one-in-two-1} and \eqref{one-in-two-1-harnack}. The case with \eqref{one-in-two-2} and \eqref{one-in-two-2-harnack} can be treated similarly. Set $r_{0}=\frac{p_{0}K}{2}$. For $(t,x)\in\Om_{R}$ and $t\in[t_{0}+\tilde{s}_{n}+\si,t_{0}+\tilde{s}_{n}+\si+\tau]$, we deduce from the fact $\inf_{t\in\R,x\in\R}u_{x}(t,x)\geq-C_{*}$ for some $C_{*}>0$ by a priori estimates for parabolic equations and the estimate \eqref{one-in-two-1-harnack} that
\begin{equation*}
\begin{split}
&u(t,x-\zeta_{n}^{+}+dr_{0}(\zeta_{n}^{+}-\zeta_{n}^{-}))-u(t,x;t_{0})\\
&\quad\quad\geq u(t,x-\zeta_{n}^{+}-\ep_{n})-u(t,x;t_{0})-C_{*}(\ep_{n}+dr_{0}(\zeta_{n}^{+}-\zeta_{n}^{-}))\\
&\quad\quad\geq r_{0}(\zeta_{n}^{+}-\zeta_{n}^{-})-C_{*}(\ep_{n}+dr_{0}(\zeta_{n}^{+}-\zeta_{n}^{-}))\\
&\quad\quad=\bigg(r_{0}-\frac{C_{*}}{pK}-C_{*}dr_{0}\bigg)(\zeta_{n}^{+}-\zeta_{n}^{-}).
\end{split}
\end{equation*}
We now choose $p=p(\al_{0},\zeta_{1}^{+}-\zeta_{1}^{-})$ sufficient large and $d=d(\al_{0},\zeta_{1}^{+}-\zeta_{1}^{-})$ sufficient small such that $r_{0}-\frac{C_{*}}{pK}-C_{*}dr_{0}\geq0$. Thus, for $(t,x)\in\Om_{R}$ and $t\in[t_{0}+\tilde{s}_{n}+\si,t_{0}+\tilde{s}_{n}+\si+\tau]$, we have
\begin{equation}\label{estimate-om-r-condition}
u(t,x-\zeta_{n}^{+}+dr_{0}(\zeta_{n}^{+}-\zeta_{n}^{-}))-u(t,x;t_{0})\geq0.
\end{equation}

Next, we estimate $u(t,x-\zeta_{n}^{+}+dr_{0}(\zeta_{n}^{+}-\zeta_{n}^{-}))-u(t,x;t_{0})$ for $(t,x)\notin\Om_{R}$ and $t\in[t_{0}+\tilde{s}_{n}+\si,t_{0}+\tilde{s}_{n}+\si+\tau]$. We distinguish between
\begin{equation*}
\begin{split}
\tilde{\Om}_{R}^{-}&=\Om_{R}^{-}\cap([t_{0}+\tilde{s}_{n}+\si,t_{0}+\tilde{s}_{n}+\si+\tau]\times\R)\quad\text{and}\\
\tilde{\Om}_{R}^{-}&=\Om_{R}^{+}\cap([t_{0}+\tilde{s}_{n}+\si,t_{0}+\tilde{s}_{n}+\si+\tau]\times\R),
\end{split}
\end{equation*}
where
\begin{equation*}
\begin{split}
\Om_{R}^{-}&=\bigg\{(t,x)\in\R\times\R\bigg|x-\tilde{\xi}(t)-\frac{\zeta_{1}^{-}+\zeta_{1}^{+}}{2}\leq-R\bigg\}\quad\text{and}\\
\Om_{R}^{+}&=\bigg\{(t,x)\in\R\times\R\bigg|x-\tilde{\xi}(t)-\frac{\zeta_{1}^{-}+\zeta_{1}^{+}}{2}\geq R\bigg\}
\end{split}
\end{equation*}
are the regions left and right to $\Om_{R}$, respectively. Clearly, if $(t,x)\notin\Om_{R}$, then $(t,x)\in\Om_{R}^{-}$ or $(t,x)\in\Om_{R}^{+}$.

\textbf{Case $(t,x)\in\tilde{\Om}_{R}^{-}$.} If $(t,x)\in\tilde{\Om}_{R}^{-}$, we check $x-(\zeta_{1}^{-}-1)-\xi(t)\leq-\frac{L_{0}}{2}$ by the definition of $R$, and hence $u(t,x-(\zeta_{1}^{-}-1))\geq\frac{1+\theta_{*}}{2}$ by \eqref{condition-L}. Choosing $d=d(\al_{0},\zeta_{1}^{+}-\zeta_{1}^{-})$ smaller if necessary so that $dr_{0}<\frac{1}{2}$, we have
\begin{equation*}
\zeta_{1}^{-}-1\leq\zeta_{n}^{-}\leq\zeta_{n}^{+}-dr_{0}(\zeta_{n}^{+}-\zeta_{n}^{-}),
\end{equation*}
where the first inequality is due to the induction assumption. The monotonicity of $u(t,x)$ in $x$ then yields
\begin{equation}\label{aux-om-negative-1}
u(t,x-\zeta_{n}^{+}+dr_{0}(\zeta_{n}^{+}-\zeta_{n}^{-}))\geq u(t,x-(\zeta_{1}^{-}-1))\geq\frac{1+\theta_{*}}{2}.
\end{equation}
Moreover, by the induction assumption
\begin{equation}\label{aux-om-negative-2}
\begin{split}
u(t,x;t_{0})&\geq u(t,x-\zeta^{-}_{n})-q_{n}e^{-\om(t-t_{0}-s_{n})}\Ga(x-\tilde{\xi}(t)-\zeta^{-}_{n})\\
&\geq u(t,x-(\zeta^{-}_{1}-1))-(\zeta_{n}^{+}-\zeta_{n}^{-})e^{-\om\si}\\
&\geq \frac{1+\theta_{*}}{2}-\frac{1-\theta_{*}}{2}=\theta_{*},
\end{split}
\end{equation}
where $\si=\si(\al_{0},\zeta_{1}^{+}-\zeta_{1}^{-})$ is large so that $(\zeta_{n}^{+}-\zeta_{n}^{-})e^{-\om\si}\leq(\zeta_{1}^{+}-\zeta_{1}^{-}+2)e^{-\om\si}\leq\frac{1-\theta_{*}}{2}$. Setting
\begin{equation*}
v(t,x;t_{0})=u(t,x-\zeta_{n}^{+}+dr_{0}(\zeta_{n}^{+}-\zeta_{n}^{-}))-u(t,x;t_{0}),
\end{equation*}
we have
\begin{equation*}
v_{t}=v_{xx}+f(t,u(t,x-\zeta_{n}^{+}+dr_{0}(\zeta_{n}^{+}-\zeta_{n}^{-})))-f(t,u(t,x;t_{0}))=v_{xx}+a(t,x)v,
\end{equation*}
where $a(t,x)\leq -\beta$ due to \eqref{aux-om-negative-1}, \eqref{aux-om-negative-2} and $\rm(H3)$. By \eqref{estimate-om-r-condition}, $v(t,x;t_{0})$ is nonnegative on the boundary of $\tilde{\Om}_{R}^{-}$. At the initial moment $t_{0}+\tilde{s}_{n}+\si$, we deduce from the induction assumption and the fact $\inf_{t\in\R,x\in\R}u_{x}(t,x)\geq-C_{*}$ for some $C_{*}>0$ that
\begin{equation*}
\begin{split}
&v(t_{0}+\tilde{s}_{n}+\si,x;t_{0})\\
&\quad\quad=u(t_{0}+\tilde{s}_{n}+\si,x-\zeta_{n}^{+}+dr_{0}(\zeta_{n}^{+}-\zeta_{n}^{-}))-u(t_{0}+\tilde{s}_{n}+\si,x;t_{0})\\
&\quad\quad\geq u(t_{0}+\tilde{s}_{n}+\si,x-\zeta_{n}^{+}+dr_{0}(\zeta_{n}^{+}-\zeta_{n}^{-}))-u(t_{0}+\tilde{s}_{n}+\si,x-\zeta_{n}^{+})-(\zeta_{n}^{+}-\zeta_{n}^{-})e^{-\om\si}\\
&\quad\quad\geq -C_{*}dr_{0}(\zeta_{n}^{+}-\zeta_{n}^{-})-(\zeta_{n}^{+}-\zeta_{n}^{-})e^{-\om\si}\\
&\quad\quad\geq -C(dr_{0}+e^{-\om\si})(\zeta_{n}^{+}-\zeta_{n}^{-}).
\end{split}
\end{equation*}
Define
\begin{equation*}
\tilde{v}(t)=-C(dr_{0}+e^{-\om\si})(\zeta_{n}^{+}-\zeta_{n}^{-})e^{-\beta(t-(t_{0}+\tilde{s}_{n}+\si))}.
\end{equation*}
It is a space-independent solution of $\tilde{v}_{t}=\tilde{v}_{xx}-\beta\tilde{v}$ with initial data $\tilde{v}(t_{0}+\tilde{s}_{n}+\si)=-C(dr_{0}+e^{-\om\si})(\zeta_{n}^{+}-\zeta_{n}^{-})$. Since $\tilde{v}\leq0$, it satisfies $\tilde{v}_{t}\leq\tilde{v}_{xx}+a(t,x)\tilde{v}$, hence, the comparison principle implies that for any $(t,x)\in\tilde{\Om}_{R}^{-}$
\begin{equation}\label{estimate-om-r-behind-condition}
\begin{split}
&u(t,x-\zeta_{n}^{+}+dr_{0}(\zeta_{n}^{+}-\zeta_{n}^{-}))-u(t,x;t_{0})\\
&=v(t,x;t_{0})\geq\tilde{v}(t)=-C(dr_{0}+e^{-\om\si})(\zeta_{n}^{+}-\zeta_{n}^{-})e^{-\beta(t-(t_{0}+\tilde{s}_{n}+\si))}.
\end{split}
\end{equation}

\textbf{Case $(t,x)\in\tilde{\Om}_{R}^{+}$.} If $(t,x)\in\tilde{\Om}_{R}^{+}$, then $x-(\zeta_{1}^{+}+1)-\xi(t)\geq\frac{L_{0}}{2}$, and hence $u(t,x-(\zeta_{1}^{+}+1))\leq\frac{\theta}{2}$ by \eqref{condition-L}. We then obtain from the monotonicity of $u(t,x)$ in $x$ and the estimate $\zeta_{n}^{+}-dr_{0}(\zeta_{n}^{+}-\zeta_{n}^{-})\leq\zeta_{n}^{+}\leq\zeta_{1}^{+}+1$ that
\begin{equation*}
u(t,x-\zeta_{n}^{+}+dr_{0}(\zeta_{n}^{+}-\zeta_{n}^{-}))\leq u(t,x-(\zeta_{1}^{+}+1))\leq\frac{\theta}{2}.
\end{equation*}
Moreover, by the induction assumption
\begin{equation*}
\begin{split}
u(t,x;t_{0})&\leq u(t,x-\zeta^{+}_{n})+q_{n}e^{-\om(t-t_{0}-s_{n})}\Ga(x-\tilde{\xi}(t)-\zeta^{+}_{n})\\
&\leq u(t,x-(\zeta_{1}^{+}+1))+(\zeta_{1}^{+}-\zeta_{1}^{-}+2)e^{-\om\si}\leq\theta
\end{split}
\end{equation*}
provided $\si=\si(\al_{0},\zeta_{1}^{+}-\zeta_{1}^{-})$ is large so that $(\zeta_{1}^{+}-\zeta_{1}^{-}+2)e^{-\om\si}\leq\frac{\theta}{2}$. Thus, setting $v(t,x;t_{0})=u(t,x-\zeta_{n}^{+}+dr_{0}(\zeta_{n}^{+}-\zeta_{n}^{-}))-u(t,x;t_{0})$, we verify
\begin{equation*}
v_{t}=v_{xx}.
\end{equation*}
For $(t,x)\in\tilde{\Om}_{R}^{+}$, we easily check $x-\tilde{\xi}(t)-\zeta_{n}^{+}\geq L_{0}+1$, thus $\Ga(x-\tilde{\xi}(t)-\zeta_{n}^{+})=e^{-\al(x-\tilde{\xi}(t)-\zeta_{n}^{+}-L_{0})}$ by the definition of $\Ga$. Then, at the initial moment $t=t_{0}+\tilde{s}_{n}+\si$, we deduce from the induction assumption
\begin{equation*}
\begin{split}
&v(t_{0}+\tilde{s}_{n}+\si,x;t_{0})\\
&\quad\quad=u(t_{0}+\tilde{s}_{n}+\si,x-\zeta_{n}^{+}+dr_{0}(\zeta_{n}^{+}-\zeta_{n}^{-}))-u(t_{0}+\tilde{s}_{n}+\si,x;t_{0})\\
&\quad\quad\geq u(t_{0}+\tilde{s}_{n}+\si,x-\zeta_{n}^{+}+dr_{0}(\zeta_{n}^{+}-\zeta_{n}^{-}))-u(t_{0}+\tilde{s}_{n}+\si,x-\zeta_{n}^{+})\\
&\quad\quad\quad-(\zeta_{n}^{+}-\zeta_{n}^{-})e^{-\om\si}e^{-\al(x-\tilde{\xi}(t_{0}+\tilde{s}_{n}+\si)-\zeta_{n}^{+}-L_{0})}\\
&\quad\quad\geq-Cdr_{0}(\zeta_{n}^{+}-\zeta_{n}^{-})e^{-\al(x-\zeta_{n}^{+}-\tilde{\xi}(t_{0}+\tilde{s}_{n}+\si))}-(\zeta_{n}^{+}-\zeta_{n}^{-})e^{-\om\si}e^{-\al(x-\tilde{\xi}(t_{0}+\tilde{s}_{n}+\si)-\zeta_{n}^{+}-L_{0})}\\
&\quad\quad\geq-C(dr_{0}+e^{-\om\si})(\zeta_{n}^{+}-\zeta_{n}^{-})e^{-\al(x-\zeta_{n}^{+}-\tilde{\xi}(t_{0}+\tilde{s}_{n}+\si)-L_{0})},
\end{split}
\end{equation*}
where we used Proposition \ref{prop-transition-wave}$\rm(iv)$ in the second inequality. More precisely, we used the following estimate
\begin{equation*}
\begin{split}
&u(t_{0}+\tilde{s}_{n}+\si,x-\zeta_{n}^{+}+dr_{0}(\zeta_{n}^{+}-\zeta_{n}^{-}))-u(t_{0}+\tilde{s}_{n}+\si,x-\zeta_{n}^{+})\\
&\quad\quad=dr_{0}(\zeta_{n}^{+}-\zeta_{n}^{-})u_{x}(t_{0}+\tilde{s}_{n}+\si,x-\zeta_{n}^{+}+y)\quad (\text{where}\,\,y\in[0,dr_{0}(\zeta_{n}^{+}-\zeta_{n}^{-})])\\
&\quad\quad\geq-Cdr_{0}(\zeta_{n}^{+}-\zeta_{n}^{-})e^{-c_{0}(x-\zeta_{n}^{+}+y-\xi(t_{0}+\tilde{s}_{n}+\si))}\quad(\text{by Proposition \ref{prop-transition-wave}{\rm(iv)}})\\
&\quad\quad\geq-Cdr_{0}(\zeta_{n}^{+}-\zeta_{n}^{-})e^{-\al(x-\zeta_{n}^{+}-\xi(t_{0}+\tilde{s}_{n}+\si))}\quad(\text{since}\,\,\al\leq c_{0})\\
&\quad\quad=-Cdr_{0}(\zeta_{n}^{+}-\zeta_{n}^{-})e^{-\al(x-\zeta_{n}^{+}-\tilde{\xi}(t_{0}+\tilde{s}_{n}+\si))}e^{-\al(\tilde{\xi}(t_{0}+\tilde{s}_{n}+\si)-\xi(t_{0}+\tilde{s}_{n}+\si))}\\
&\quad\quad\geq-Cdr_{0}(\zeta_{n}^{+}-\zeta_{n}^{-})e^{-\al(x-\zeta_{n}^{+}-\tilde{\xi}(t_{0}+\tilde{s}_{n}+\si))}.
\end{split}
\end{equation*}
Moreover, estimate \eqref{estimate-om-r-condition} gives the nonnegativity of $v(t,x;t_{0})$ on the boundary of $\tilde{\Om}_{R}^{-}$. Let $\nu=\frac{\al c_{B}}{2}-\al^{2}$. Since
\begin{equation*}
\tilde{v}(t,x;t_{0})=-C(dr_{0}+e^{-\om\si})(\zeta_{n}^{+}-\zeta_{n}^{-})e^{-\al(x-\zeta_{n}^{+}-\tilde{\xi}(t)-L_{0})}e^{-\nu(t-(t_{0}+\tilde{s}_{n}+\si))}
\end{equation*}
solves $\tilde{v}_{t}\leq\tilde{v}_{xx}$, we conclude from the comparison principle that for $(t,x)\in\tilde{\Om}_{R}^{+}$
\begin{equation}\label{estimate-om-r-ahead-condition}
\begin{split}
&u(t,x-\zeta_{n}^{+}+dr_{0}(\zeta_{n}^{+}-\zeta_{n}^{-}))-u(t,x;t_{0})\\
&=v(t,x;t_{0})\\
&\geq\tilde{v}(t,x;t_{0})=-C(dr_{0}+e^{-\om\si})(\zeta_{n}^{+}-\zeta_{n}^{-})e^{-\al(x-\zeta_{n}^{+}-\tilde{\xi}(t)-L_{0})}e^{-\nu(t-(t_{0}+\tilde{s}_{n}+\si))}.
\end{split}
\end{equation}

So far, we have obtained the following estimate for $u(t,x;t_{0})$ for $t\in[t_{0}+\tilde{s}_{n}+\si,t_{0}+\tilde{s}_{n}+\si+\tau]$
\begin{equation}\label{estimate-auxiliary-reduced-gap}
u(t,x-\zeta^{-}_{n})-q_{n}e^{-\om(t-t_{0}-s_{n})}\Ga(x-\tilde{\xi}(t)-\zeta^{-}_{n})\leq u(t,x;t_{0})\leq\tilde{u}(t,x;t_{0}),
\end{equation}
where the first inequality is the induction assumption and
\begin{equation*}\label{estimate-auxiliary-reduced-gap-upper-bound}
\tilde{u}(t,x;t_{0})=
\begin{cases}
u(t,x-\zeta_{n}^{+}+dr_{0}(\zeta_{n}^{+}-\zeta_{n}^{-})),\quad\text{if}\,\,(t,x)\in\Om_{R},\\
u(t,x-\zeta_{n}^{+}+dr_{0}(\zeta_{n}^{+}-\zeta_{n}^{-}))\\
\quad+C(dr_{0}+e^{-\om\si})(\zeta_{n}^{+}-\zeta_{n}^{-})e^{-\beta(t-(t_{0}+\tilde{s}_{n}+\si))},\quad\text{if}\,\,(t,x)\in\Om_{R}^{-},\\
u(t,x-\zeta_{n}^{+}+dr_{0}(\zeta_{n}^{+}-\zeta_{n}^{-}))\\
\quad+C(dr_{0}+e^{-\om\si})(\zeta_{n}^{+}-\zeta_{n}^{-})e^{-\al(x-\zeta_{n}^{+}-\tilde{\xi}(t)-L_{0})}e^{-\nu(t-(t_{0}+\tilde{s}_{n}+\si))},\quad\text{if}\,\,(t,x)\in\Om_{R}^{+},
\end{cases}
\end{equation*}
is given by \eqref{estimate-om-r-condition}, \eqref{estimate-om-r-behind-condition} and \eqref{estimate-om-r-ahead-condition}. Note that from the induction assumption to \eqref{estimate-auxiliary-reduced-gap}, we have reduced the gap $\zeta_{n}^{+}-\zeta_{n}^{-}$ to $(1-dr_{0})(\zeta_{n}^{+}-\zeta_{n}^{-})$. We now construct $s_{n+1}$, $\zeta_{n+1}^{-}$, $\zeta_{n+1}^{+}$ and $q_{n+1}$, and show $\textbf{Claim}_{n+1}$.

Set
\begin{equation*}
\begin{split}
s_{n+1}&=\tilde{s}_{n}+\si+\tau,\\
\zeta_{n+\frac{1}{2}}^{-}&=\zeta_{n}^{-},\\
\zeta_{n+\frac{1}{2}}^{+}&=\zeta_{n}^{+}-dr_{0}(\zeta_{n}^{+}-\zeta_{n}^{-}),\\
q_{n+\frac{1}{2}}&=\Big[(\zeta_{n}^{+}-\zeta_{n}^{-})e^{-\om\si}+C(dr_{0}+e^{-\om\si})(\zeta_{n}^{+}-\zeta_{n}^{-})\Big]e^{-\om\tau},
\end{split}
\end{equation*}
It then follows from \eqref{estimate-auxiliary-reduced-gap} that at the moment $t=t_{0}+s_{n+1}$,
\begin{equation}\label{estimate-auxiliary-reduced-gap-1}
\begin{split}
&u(t_{0}+s_{n+1},x-\zeta^{-}_{n+\frac{1}{2}})-q_{n+\frac{1}{2}}\Ga(x-\tilde{\xi}(t_{0}+s_{n+1})-\zeta^{-}_{n+\frac{1}{2}})\\
&\leq u(t_{0}+s_{n+1},x;t_{0})\leq u(t_{0}+s_{n+1},x-\zeta_{n+\frac{1}{2}}^{+})+q_{n+\frac{1}{2}}\Ga(x-\tilde{\xi}(t_{0}+s_{n+1})-\zeta_{n}^{+}).
\end{split}
\end{equation}
Since $\Ga(x-\tilde{\xi}(t_{0}+s_{n+1})-\zeta_{n+\frac{1}{2}}^{+})$ is $\Ga(x-\tilde{\xi}(t_{0}+s_{n+1})-\zeta_{n}^{+})$ shifting to the left by $dr_{0}(\zeta_{n}^{+}-\zeta_{n}^{-1})<\frac{1}{2}$, we conclude from the definition of $\Ga$ that there is $C>0$ such that
\begin{equation*}
\Ga(x-\tilde{\xi}(t_{0}+s_{n+1})-\zeta_{n}^{+})\leq C\Ga(x-\tilde{\xi}(t_{0}+s_{n+1})-\zeta_{n+\frac{1}{2}}^{+}),\quad x\in\R.
\end{equation*}
Setting $\tilde{q}_{n+\frac{1}{2}}=Cq_{n+\frac{1}{2}}$, \eqref{estimate-auxiliary-reduced-gap-1} yields
\begin{equation}\label{estimate-auxiliary-reduced-gap-2}
\begin{split}
&u(t_{0}+s_{n+1},x-\zeta^{-}_{n+\frac{1}{2}})-\tilde{q}_{n+\frac{1}{2}}\Ga(x-\tilde{\xi}(t_{0}+s_{n+1})-\zeta^{-}_{n+\frac{1}{2}})\\
&\leq u(t_{0}+s_{n+1},x;t_{0})\leq u(t_{0}+s_{n+1},x-\zeta_{n+\frac{1}{2}}^{+})+\tilde{q}_{n+\frac{1}{2}}\Ga(x-\tilde{\xi}(t_{0}+s_{n+1})-\zeta_{n+\frac{1}{2}}^{+}).
\end{split}
\end{equation}
Also,
\begin{equation}\label{tilde-q-n-half}
\tilde{q}_{n+\frac{1}{2}}\leq C(dr_{0}+e^{-\om\si})(\zeta_{n}^{+}-\zeta_{n}^{-})e^{-\om\tau}\leq
\min\{\ep_{0},\zeta_{n+\frac{1}{2}}^{+}-\zeta_{n+\frac{1}{2}}^{-}\}
\end{equation}
provided $\tau=\tau(\al_{0},\zeta_{1}^{+}-\zeta_{1}^{-})$ is sufficient large, where $\ep_{0}$ is given by \eqref{constant-ep}. Using this estimate and \eqref{estimate-auxiliary-reduced-gap-2}, we can apply Theorem \ref{thm-aprior-estimate} as mentioned in Remark \ref{rem-thm-aprior-estimate} to conclude
\begin{equation}\label{estimate-auxiliary-reduced-gap-3}
\begin{split}
&u(t,x-\zeta^{-}_{n+1})-q_{n+1}e^{-\om(t-t_{0}-s_{n+1})}\Ga(x-\tilde{\xi}(t)-\zeta^{-}_{n+1})\\
&\leq u(t,x;t_{0})\leq u(t,x-\zeta_{n+1}^{+})+q_{n+1}e^{-\om(t-t_{0}-s_{n+1})}\Ga(x-\tilde{\xi}(t)-\zeta_{n+1}^{+})
\end{split}
\end{equation}
for $x\in\R$ and $t\geq t_{0}+s_{n+1}$, where
\begin{equation*}
\zeta_{n+1}^{-}=\zeta_{n+\frac{1}{2}}^{-}-\frac{M}{\om}\tilde{q}_{n+\frac{1}{2}},\quad \zeta_{n+1}^{+}=\zeta_{n+\frac{1}{2}}^{+}+\frac{M}{\om}\tilde{q}_{n+\frac{1}{2}}\quad\text{and}\quad q_{n+1}=\tilde{q}_{n+\frac{1}{2}}.
\end{equation*}
Moreover, by \eqref{tilde-q-n-half}, we have $q_{n+1}\leq\zeta_{n+1}^{+}-\zeta_{n+1}^{-}$. For $\zeta_{n+1}^{-}$ and $\zeta_{n+1}^{+}$, we have
\begin{equation}\label{right-shift-estimate-1}
\begin{split}
\zeta_{n+1}^{-}&=\zeta_{n}^{-}-C\Big[(\zeta_{n}^{+}-\zeta_{n}^{-})e^{-\om\si}+C(dr_{0}+e^{-\om\si})(\zeta_{n}^{+}-\zeta_{n}^{-})\Big]e^{-\om\tau}\\
&\geq\zeta_{n}^{-}-C(dr_{0}+e^{-\om\si})(\zeta_{n}^{+}-\zeta_{n}^{-})e^{-\om\tau}
\end{split}
\end{equation}
and
\begin{equation}\label{left-shift-estimate-1}
\begin{split}
\zeta_{n+1}^{+}&=\zeta_{n}^{+}-dr_{0}(\zeta_{n}^{+}-\zeta_{n}^{-})+\Big[(\zeta_{n}^{+}-\zeta_{n}^{-})e^{-\om\si}+C(dr_{0}+e^{-\om\si})(\zeta_{n}^{+}-\zeta_{n}^{-})\Big]e^{-\om\tau}\\
&\leq\zeta_{n}^{+}-(dr_{0}-C(dr_{0}+e^{-\om\si})e^{-\om\tau})(\zeta_{n}^{+}-\zeta_{n}^{-}).
\end{split}
\end{equation}
It follows that
\begin{equation}\label{gap-estimate-1}
\zeta_{n+1}^{+}-\zeta_{n+1}^{-}\leq(1-dr_{0}+C(dr_{0}+e^{-\om\si})e^{-\om\tau})(\zeta_{n}^{+}-\zeta_{n}^{-}).
\end{equation}
The estimates \eqref{estimate-auxiliary-reduced-gap-3}, \eqref{right-shift-estimate-1}, \eqref{left-shift-estimate-1} and \eqref{gap-estimate-1} are obtained provided \eqref{one-in-two-1} and \eqref{one-in-two-1-harnack} hold. If \eqref{one-in-two-2} and \eqref{one-in-two-2-harnack} hold, then we can also obtain \eqref{estimate-auxiliary-reduced-gap-3} with $\zeta_{n+1}^{-}$ and $\zeta_{n+1}^{+}$ satisfying
\begin{equation*}\label{right-shift-estimate-2}
\zeta_{n+1}^{-}\geq\zeta_{n}^{-}+(dr_{0}-C(dr_{0}+e^{-\om\si})e^{-\om\tau})(\zeta_{n}^{+}-\zeta_{n}^{-})
\end{equation*}
and
\begin{equation*}\label{left-shift-estimate-2}
\zeta_{n+1}^{+}\leq\zeta_{n}^{+}+C(dr_{0}+e^{-\om\si})e^{-\om\tau}(\zeta_{n}^{+}-\zeta_{n}^{-}),
\end{equation*}
respectively, and hence, $\zeta_{n+1}^{+}-\zeta_{n+1}^{-}$ satisfies \eqref{gap-estimate-1} as well.

Choosing $\si=\si(\al_{0},\zeta_{1}^{+}-\zeta_{1}^{-})$ and $\tau=\tau(\al_{0},\zeta_{1}^{+}-\zeta_{1}^{-})$ sufficiently large, we can write the above estimate in the following uniform form: for some $\de_{0}=\de_{0}(\al_{0},\zeta_{1}^{+}-\zeta_{1}^{-})>0$ sufficiently small, there holds
\begin{equation*}
\begin{split}
&\zeta_{n}^{-}-\de_{0}(\zeta_{n}^{+}-\zeta_{n}^{-})\leq\zeta_{n+1}^{-}\leq\zeta_{n+1}^{+}\leq\zeta_{n}^{+}+\de_{0}(\zeta_{n}^{+}-\zeta_{n}^{-}),\\
&\zeta_{n+1}^{+}-\zeta_{n+1}^{-}\leq(1-\de_{0})(\zeta_{n}^{+}-\zeta_{n}^{-}).
\end{split}
\end{equation*}
We then deduce from the induction assumption that $\zeta_{1}^{-}-1\leq\zeta_{n+1}^{-}\leq\zeta_{n+1}^{+}\leq\zeta_{1}^{+}+1$ and then, $\zeta_{n}^{+}-\zeta_{n}^{-}\leq C(1-\de_{0})^{n}$ for some $C>0$.

It remains to choose $s_{1}$. From the proof, we see that
\begin{equation*}
s_{n+1}=\tilde{s}_{n}+\si+\tau=s_{n}+\frac{\ln p}{\om}+\si+\tau,
\end{equation*}
where $p=p(\al_{0},\zeta_{1}^{+}-\zeta_{1}^{-})$, $\si=\si(\al_{0},\zeta_{1}^{+}-\zeta_{1}^{-})$ and $\tau=\tau(\al_{0},\zeta_{1}^{+}-\zeta_{1}^{-})$ are large constants. Thus, choosing $s_{1}=\frac{\ln p}{\om}+\si+\tau$, we have $s_{n}=ns_{1}$. This proves $\textbf{Claim}_{n}$ for $n\geq2$.

Finally, we set $T=Ns_{1}$ for some $N$ sufficiently large to complete the proof of $\rm(i)$.

$\rm(ii)$ We apply Theorem \ref{thm-aprior-estimate} and the iteration arguments for $\textbf{Claim}_{n}$, $n\in\N$ in the proof of $\rm(i)$ to the estimate
\begin{equation*}
\begin{split}
&u(t_{0}+T,x-\zeta^{-}_{T})-q_{T}\Ga(x-\tilde{\xi}(t_{0}+T)-\zeta^{-}_{T})\\
&\leq u(t_{0}+T,x;t_{0})\leq u(t_{0}+T,x-\zeta^{+}_{T})+q_{T}\Ga(x-\tilde{\xi}(t_{0}+T)-\zeta^{+}_{T}),
\end{split}
\end{equation*}
at the new initial moment $t_{0}+T$, which is the result of $\rm(i)$. The only difference between this and the arguments in the proof of $\rm(i)$ is that now the initial gap between the shifts $\zeta_{T}^{-}$ and $\zeta_{T}^{+}$ satisfy $0\leq\zeta_{T}^{-}-\zeta_{T}^{-}\leq 1$. As a result, various constants as in the proof of $\rm(i)$ now do not depend on $\zeta_{T}^{-}-\zeta_{T}^{-}$, and only depend on $\al_{0}$, and hence, we find the result.
\end{proof}

%%%%%%%%%%%%%%%%%%%%%%%%%%%%%%%%%%%%%%%%%%%%%%%%

\section{Properties of Generalized Traveling Waves}\label{sec-property-gtw}

In this section, we study fundamental properties of arbitrary generalized traveling waves as defined in Definition \ref{def-generalized-critical}. Properties of particular interest are space monotonicity and exponential decay ahead of the interface as stated in Theorem \ref{thm-monotoncity-exponential-decay}. These two properties play an crucial role in the study of uniqueness of generalized traveling waves, which will be the objective of Section \ref{sec-uniqueness}. We always assume $\rm(H1)$-$\rm(H3)$ in this section.

From now on, we will always consider some fixed generalized traveling wave $v(t,x)$ of \eqref{main-eqn} as defined in Definition \ref{def-generalized-critical}. Let $\xi^{v}(t)$ be the interface location function of $v(t,x)$.

\subsection{Space Monotonicity}\label{subsection-space-mono}

In this subsection, we study the space monotonicity of $v(t,x)$ and prove Theorem \ref{thm-monotoncity-exponential-decay}$\rm(i)$. We first prove the following lemma.

\begin{lem}\label{lem-space-monotonicity}
There exists $h_{1}>0$ such that for any $h\geq h_{1}$ there holds $v(t,x)\leq v(t,x-h)$ for all $x\in\R$ and $t\in\R$.
\end{lem}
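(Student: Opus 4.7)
The plan is to establish the lemma by a sliding argument supported by a compactness-based strong maximum principle. I will introduce $W(t,x):=v(t,x-h)-v(t,x)$ and show that, for $h$ sufficiently large, $\inf_{(t,x)\in\R^{2}} W\geq 0$. The approach combines elementary tail estimates---which reduce the problem to a region where $\rm(H3)$ provides a useful sign on the linearized coefficient---with a limit-and-max-principle argument that rules out a negative infimum.

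First, I will fix $\epsilon_{0}\in(0,\min\{\theta,1-\theta_{*}\})$ and use the uniformity in $t$ of the limits in Definition \ref{def-generalized-critical} to find $R_{*}>0$ such that, for every $t\in\R$, $v(t,x)\geq 1-\epsilon_{0}>\theta_{*}$ for $x\leq\xi^{v}(t)-R_{*}$ and $v(t,x)\leq\epsilon_{0}<\theta$ for $x\geq\xi^{v}(t)+R_{*}$. Setting $h_{1}:=2R_{*}$ (to be enlarged below) and taking $h\geq h_{1}$, for $x\geq\xi^{v}(t)+R_{*}$ one has $v(t,x-h)\geq 1-\epsilon_{0}>\epsilon_{0}\geq v(t,x)$, so $W(t,x)>0$ throughout that region. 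Consequently, any point $(t,x)$ at which $W(t,x)<0$ must satisfy $x<\xi^{v}(t)+R_{*}$; then $x-h<\xi^{v}(t)-R_{*}$ forces $v(t,x-h)\geq 1-\epsilon_{0}>\theta_{*}$, and in turn $v(t,x)>1-\epsilon_{0}>\theta_{*}$. Thus at every point of $\{W<0\}$ both $v(t,x)$ and $v(t,x-h)$ lie above $\theta_{*}$, placing us in the regime where $\rm(H3)$ applies.

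Next, writing $W_{t}=W_{xx}+c(t,x)W$ with $c(t,x)=\int_{0}^{1} f_{u}(t,sv(t,x-h)+(1-s)v(t,x))\,ds\leq-\beta$ on $\{W<0\}$, I will suppose by contradiction that $m:=\inf W<0$. Since $v(t,\cdot+\xi^{v}(t))$ tends to $1$ and $0$ uniformly in $t$ as its argument goes to $\mp\infty$, one has $W(t,x)\to 0$ uniformly in $t$ as $|x-\xi^{v}(t)|\to\infty$; any minimizing sequence $(t_{n},x_{n})$ therefore satisfies $y_{n}:=x_{n}-\xi^{v}(t_{n})$ bounded. Using parabolic regularity for $v$ together with the continuity hypotheses in $\rm(H1)$, I will extract along a subsequence $v_{n}(t,x):=v(t+t_{n},x+\xi^{v}(t_{n}))\to\bar v$ locally uniformly, $f(\cdot+t_{n},\cdot)\to\bar f\in H(f)$, and $y_{n}\to y_{\infty}$, so that $\bar v$ solves $\bar v_{t}=\bar v_{xx}+\bar f(t,\bar v)$ and $\bar W:=\bar v(\cdot,\cdot-h)-\bar v$ attains $m$ at the interior point $(0,y_{\infty})$ with $\bar W\geq m$ everywhere. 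By choosing $h_{1}$ larger than the uniform bound on $|y_{n}|$ plus $R_{*}$, one has $\bar v(0,y_{\infty}-h)\geq 1-\epsilon_{0}>\theta_{*}$ in the limit, forcing $\bar v(0,y_{\infty})>\theta_{*}$ as well and hence $\bar c(0,y_{\infty})\leq-\beta$. At the interior minimum, $\bar W_{t}=0$ and $\bar W_{xx}\geq 0$, so $0=\bar W_{xx}(0,y_{\infty})+\bar c(0,y_{\infty})\,m\geq-\beta m>0$, a contradiction; therefore $W\geq 0$ for every $h\geq h_{1}$.

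The main obstacle will be making the compactness step fully rigorous: one must verify equicontinuity of $\{v_{n}\}$ (which follows from interior parabolic Schauder estimates applied to the uniformly bounded $v$), the uniform boundedness of $\{y_{n}\}$ along the minimizing sequence (which hinges on the uniform-in-$t$ decay of $W$), and the extraction of a limit nonlinearity $\bar f\in H(f)$ that inherits $\rm(H3)$. A subtler point is the passage of the pointwise lower bound $v(t,x-h)\geq 1-\epsilon_{0}$ to $\bar v(0,y_{\infty}-h)$ under the limit, which is precisely what forces $h_{1}$ to be taken larger than $2R_{*}$---namely exceeding the $R_{*}$-plus-decay-window governing the minimizing $|y_{n}|$.
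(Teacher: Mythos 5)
There is a genuine gap in your tail estimate, and it is fatal to the argument as written. You claim that with $h\geq h_1:=2R_*$, one has $W(t,x)=v(t,x-h)-v(t,x)>0$ on the entire region $\{x\geq\xi^v(t)+R_*\}$, ``because $v(t,x-h)\geq 1-\epsilon_0$.'' This is false: the condition $x\geq \xi^v(t)+R_*$ only gives $x-h\leq x-2R_*$, which does \emph{not} force $x-h\leq\xi^v(t)-R_*$. If, say, $x=\xi^v(t)+10R_*$ and $h=2R_*$, then $x-h=\xi^v(t)+8R_*$ and $v(t,x-h)\leq\epsilon_0$, not $\geq 1-\epsilon_0$. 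No finite choice of $h_1$ fixes this: as $x\to\infty$, $x-h\to\infty$ as well, so both $v(t,x)$ and $v(t,x-h)$ decay to $0$ and the sign of $W$ there is a priori unknown (you have not yet proved monotonicity, which is exactly what would decide the sign). Consequently the key reduction step --- ``any point where $W<0$ lies in the region where both $v(t,x)$ and $v(t,x-h)$ exceed $\theta_*$'' --- is not established, and the contradiction at the end only rules out a negative infimum in the region where $\rm(H3)$ applies, not globally.

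The far-right region (where $v(t,x)\leq\theta/2$) genuinely requires a separate argument, and the paper supplies one. There one cannot sign the zeroth-order coefficient $c(t,x)$ (indeed $c\equiv 0$ since $f$ vanishes on both $v(t,x)$ and $v(t,x-h)$ when they lie below $\theta$). Instead, the paper uses that $f(t,v(t,x))=0$ to obtain the super-solution inequality $\phi_t-\phi_{xx}=f(t,v(t,x-h))\geq 0$ for $\phi=W$ in that region, and that $\phi\geq 0$ on its left/past boundary (which belongs to the transition region already handled); a negative interior infimum is then excluded by a backward parabolic maximum principle together with the boundary nonnegativity, via a compactness argument on half-balls that touch the boundary. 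Your compactness-and-translation framework for the other regions parallels the paper's (the paper also extracts a limiting $\tilde\phi$ and invokes the strong maximum principle), but you need to add this third case and argue it by the super-solution-plus-boundary route rather than via $\rm(H3)$.

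A minor additional issue: even in your favorable region, the claim ``by choosing $h_1$ larger than the uniform bound on $|y_n|$'' is circular as stated, since the bound on $|y_n|$ depends (through the rate of decay of $W$) on the shift $h$. The paper avoids this by working with the fixed sets $\Om_t$ defined by level constraints on $v$ itself rather than on $W$.
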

\begin{proof}
For $t\in\R$, define
\begin{equation*}
\begin{split}
\Om_{t}&=\bigg\{x\in\R\bigg|\frac{\theta}{2}\leq v(t,x)\leq\frac{1+\theta_{*}}{2}\bigg\},\\
\Om_{t}^{1}&=\bigg\{x\in\big[\inf\Om_{t},\sup\Om_{t}\big]\bigg|v(t,x)\leq\frac{1+\theta_{*}}{2}\bigg\},\\
\Om_{t}^{2}&=\{x\in\R|x\leq\inf\Om_{t}^{1}\}=\{x\in\R|x\leq\inf\Om_{t}\},\\
\Om_{t}^{3}&=\{x\in\R|x\geq\sup\Om_{t}^{1}\}=\{x\in\R|x\geq\sup\Om_{t}\},\\
\Om_{t}^{4}&=\bigg\{x\in\big[\inf\Om_{t},\sup\Om_{t}\big]\bigg|v(t,x)\geq\frac{1+\theta_{*}}{2}\bigg\}
\end{split}
\end{equation*}
and set $\Om_{i}=\cup_{t\in\R}(\{t\}\times\Om_{t}^{i})$ for $i=1,2,3,4$. Notice $\R^{2}=\cup_{i=1}^{4}\Om_{i}$.

We first show that there exists $h_{1}>0$ such that for any $h\geq h_{1}$ there holds $v(t,x)\leq v(t,x-h)$ for $(t,x)\in\Om_{1}$. We see from the definition of generalized traveling wave that
\begin{equation*}
h_{1}:=\sup_{t\in\R}(\sup\Om_{t}^{1}-\inf\Om_{t}^{1})<\infty.
\end{equation*}
Since $v(t,x)\geq\frac{1+\theta_{*}}{2}$ for all $x\leq\inf\Om_{t}^{1}$ and $t\in\R$, we deduce for $(t,x)\in\Om_{1}$ and $h\geq h_{1}$ that $x-h\leq\inf\Om_{t}^{1}$, and hence, $v(t,x-h)\geq\frac{1+\theta_{*}}{2}\geq v(t,x)$.

Next, we show that for any $h\geq h_{1}$ there holds $v(t,x)\leq v(t,x-h)$ for $(t,x)\in\Om_{2}$. Let $h\geq h_{1}$ and set $\phi(t,x)=v(t,x-h)-v(t,x)$ for $(t,x)\in\Om_{2}$. We see that $\phi(t,x)$ satisfies
\begin{equation*}
\begin{cases}
\phi_{t}=\phi_{xx}+a(t,x)\phi,\quad (t,x)\in\Om_{2},\\
\phi(t,x)\geq0,\quad (t,x)\in\partial\Om_{2}:=\cup_{t\in\R}\{(t,\inf\Om_{t}^{2})\}\subset\Om_{1},
\end{cases}
\end{equation*}
where $a(t,x)=\frac{f(t,v(t,x-h))-f(t,v(t,x))}{v(t,x-h)-v(t,x)}$. Since $v(t,x)\geq\frac{1+\theta_{*}}{2}$ for $(t,x)\in\Om_{2}$ and $x-h\in\Om_{t}^{2}$ if $x\in\Om_{t}^{2}$, we have $v(t,x-h)\geq\frac{1+\theta_{*}}{2}$ for $(t,x)\in\Om_{2}$. It then follows from $\rm(H3)$ that $a(t,x)\leq-\beta$ for $(t,x)\in\Om_{2}$. For contradiction, let us assume
\begin{equation*}
r_{2}:=\inf_{(t,x)\in\Om_{2}}\phi(t,x)<0.
\end{equation*}
Then, we can find a sequence $\{(t_{n},x_{n})\}_{n\in\N}\subset\Om_{2}$ such that $\phi(t_{n},x_{n})\leq r_{2}(1-\frac{1}{2n})$ for all $n\in\N$. Note that $\sup_{n\in\N}(\inf\Om_{t_{n}}^{2}-x_{n})\leq d_{2}$ for some $d_{2}>0$, otherwise $\phi(t_{n},x_{n})\ra0$ as $n\ra\infty$ by the definition of generalized traveling wave. Moreover, since $\phi\geq0$ on $\partial\Om_{2}$ and $\sup_{n\in\N}\phi(t_{n},x_{n})\leq\frac{r_{2}}{2}$, regularity of $v(t,x)$ implies that $x_{n}$ stays uniformly away from $\inf\Om_{t_{n}}^{2}$, that is, $\inf_{n\in\N}(\inf\Om_{t_{n}}^{2}-x_{n})\geq \tilde{d}_{2}$ for some $\tilde{d}_{2}>0$. Thus, by a prior estimates for parabolic equations, say, $\sup_{(t,x)\in\R^{2}}|v_{t}(t,x)|<\infty$ and $\sup_{(t,x)\in\R^{2}}|v_{x}(t,x)|<\infty$, we can find some small $\hat{d}_{2}>0$ such that
\begin{equation*}
\phi(t,x)\leq\frac{r_{2}}{4}\quad\text{for}\quad (t,x)\in B_{n}:=\Big\{(t,x)\in\R^{2}\Big|(t-t_{n})^{2}+(x-x_{n})^{2}\leq\hat{d}_{2}^{2}\Big\}\subset\Om_{2}
\end{equation*}
for all $n\in\N$. Now, for $n\in\N$, define $\phi_{n}(t,x)=\phi(t+t_{n},x+x_{n})$ for $(t,x)\in\R^{2}$, which satisfies $\phi_{n}\leq\frac{r_{2}}{4}$ on $B_{0}:=\{(t,x)\in\R|t^{2}+x^{2}\leq\hat{d}_{2}^{2}\}$ and $(\phi_{n})_{t}-(\phi_{n})_{xx}\geq-\frac{\beta r_{2}}{4}$ for $(t,x)\in B_{0}$. A priori estimates for parabolic equations then ensure the existence of some subsequence, still denoted by $n$, such that $\{\phi_{n}\}_{n\in\N}$ converges to some $\tilde{\phi}$ uniformly in $B_{0}$. It then follows that
\begin{equation*}
\tilde{\phi}_{t}-\tilde{\phi}_{xx}\geq-\frac{\beta r_{2}}{4}>0\quad\text{for}\quad (t,x)\in B_{0}.
\end{equation*}
Moreover, $\tilde{\phi}\geq r_{2}$ on $B_{0}$ and $\tilde{\phi}(0,0)=\lim_{n\ra\infty}\phi(t_{n},x_{n})=r_{2}$. That is, on $B_{0}$, $\tilde{\phi}$ attains its minimum at $(0,0)$, which is an interior point of $B_{0}$. It's a contradiction by maximum principle. Hence, for any $h\geq h_{1}$, we have $v(t,x)\leq v(t,x-h)$ for $(t,x)\in\Om_{2}$.

We now show that for any $h\geq h_{1}$ there holds $v(t,x)\leq v(t,x-h)$ for all $(t,x)\in\Om_{3}$. Let $h_{2}>0$ and set $\phi(t,x)=v(t,x-h)-v(t,x)$ for $(t,x)\in\Om_{3}$. Since $v(t,x)\leq\frac{\theta}{2}$ for $(t,x)\in\Om_{3}$, $f(t,v(t,x))=0$ for $(t,x)\in\Om_{3}$. We then verify
\begin{equation*}
\begin{cases}
\phi_{t}-\phi_{xx}=f(t,v(t,x-h))\geq0,\quad (t,x)\in\Om_{3},\\
\phi(t,x)\geq0,\quad (t,x)\in\partial\Om_{3}:=\cup_{t\in\R}\{(t,\inf\Om_{t}^{3})\}\subset\Om_{1}.
\end{cases}
\end{equation*}
For contradiction, suppose
\begin{equation*}
r_{3}:=\inf_{(t,x)\in\Om_{3}}\phi(t,x)<0.
\end{equation*}
Let $\{(t_{n},x_{n})\}_{n\in\N}\subset\Om_{3}$ be such that $\phi(t_{n},x_{n})\leq r_{3}(1-\frac{1}{2n})$ for all $n\in\N$. For $n\in\N$, set $d_{n}=\text{\rm dist}((t_{n},x_{n}),\Om_{1}\cap(\{t\leq t_{n}\}\times\R))$. As above, there are $\tilde{d}_{3}>\hat{d}_{3}>0$ such that $d_{n}\in[\hat{d}_{3},\tilde{d}_{3}]$ for $n\in\N$. For $n\in\N$, we have
\begin{equation*}
\phi_{t}-\phi_{xx}\geq0\quad\text{for}\quad (t,x)\in B_{n},
\end{equation*}
where
\begin{equation*}
B_{n}=\Big\{(t,x)\in\R^{2}\Big|(t-t_{n})^{2}+(x-x_{n})^{2}\leq d_{n}^{2}\,\,\text{and}\,\,t\leq t_{n}\Big\}\subset\Om_{3}.
\end{equation*}
Note that we can find some subsequence, still denoted by $n$, such that $d_{n}\ra\bar{d}_{3}\in[\hat{d}_{3},\tilde{d}_{3}]$ as $n\ra\infty$ and $\{\phi_{n}:=\phi(\cdot+t_{n},\cdot+x_{n})\}_{n\in\N}$ converges to some $\bar{\phi}$ uniformly in $B_{0}:=\{(t,x)\in\R^{2}|t^{2}+x^{2}\leq\bar{d}_{3}^{2}\,\,\text{and}\,\,t\leq0\}$. Moreover,
\begin{equation*}
\bar{\phi}_{t}-\bar{\phi}_{xx}\geq0\quad\text{for}\quad (t,x)\in B_{0}
\end{equation*}
and, on $B_{0}$, $\bar{\phi}$ attains its minimum $r_{3}$ at $(0,0)$. Thus, maximum principle implies $\bar{\phi}\equiv r_{3}$ on $B_{0}$. However, for any $n\in\N$, $\partial B_{n}\cap\Om_{1}\neq\emptyset$, and hence, there's $(\tilde{t}_{n},\tilde{x}_{n})\in\partial B_{n}\cap\Om_{1}$ such that $\phi(\tilde{t}_{n},\tilde{x}_{n})\geq0$. As a result, $\bar{\phi}\geq0$ at some point on $\partial B_{0}$. This is a contradiction. Hence,  for any $h\geq h_{1}$, we have $v(t,x)\leq v(t,x-h)$ for $(t,x)\in\Om_{3}$

Finally, we show that for any $h\geq h_{1}$ there holds $v(t,x)\leq v(t,x-h)$ for all $(t,x)\in\Om_{4}$. Let $h\geq h_{1}$. Note by the definition of $h_{1}$, we actually have $v(t,x-h)\geq\frac{1+\theta_{*}}{2}$ for $x\leq\sup\Om_{t}^{1}$ and $t\in\R$. In particular, $v(t,x-h)\geq\frac{1+\theta_{*}}{2}$ for $(t,x)\in\Om_{4}$. Thus, we are in a situation similar to the case of $\Om_{2}$, and we can argue similarly to obtain the result.

In conclusion, for any $h\geq h_{1}$ there holds $v(t,x)\leq v(t,x-h)$ for all $x\in\R$ and $t\in\R$. This completes the proof.
\end{proof}

We now prove  Theorem \ref{thm-monotoncity-exponential-decay}$\rm(i)$.

\begin{proof}[Proof of Theorem \ref{thm-monotoncity-exponential-decay}(i)]
By the classical results of Angenent (see \cite[Theorem A and Theorem B]{Ang88}), the proposition follows if we can show that for any $h>0$ there holds $v(t,x)\leq v(t,x-h)$ for all $x\in\R$ and $t\in\R$. For this purpose, define
\begin{equation*}
h_{0}=\inf\big\{h_{1}>0\big|\text{for any}\,\,h\geq h_{1}\,\,\text{there holds}\,\,v(t,x)\leq v(t,x-h)\,\,\text{for all}\,\,(t,x)\in\R^{2}\big\}
\end{equation*}
and suppose $h_{0}>0$ for contradiction. Clearly, $v\not\equiv v(\cdot,\cdot-h_{0})$. Thus, by maximum principle, there holds $v(t,x)<v(t,x-h_{0})$ for all $x\in\R$ and $t\in\R$. For $t\in\R$, let $\Om_{t}=\big\{x\in\R\big|\frac{\theta}{2}\leq v(t,x)\leq\frac{1+\theta_{*}}{2}\big\}$ as in the proof of Lemma \ref{lem-space-monotonicity}, and set
\begin{equation*}
\tilde{\Om}=\bigcup_{t\in\R}\Big(\{t\}\times\big[\inf\Om_{t},\sup\Om_{t}\big]\Big).
\end{equation*}
Setting $\phi(t,x)=v(t,x-h_{0})-v(t,x)>0$ for $x\in\R$ and $t\in\R$, we claim that
\begin{equation}\label{a-claim-space-mono}
\inf_{(t,x)\in\tilde{\Om}}\phi(t,x)>0.
\end{equation}

Suppose \eqref{a-claim-space-mono} is false. We can find a sequence $\{(t_{n},x_{n})\}_{n\in\N}\subset\tilde{\Om}$ such that $\phi(t_{n},x_{n})\ra0$ as $n\ra\infty$. For $n\in\N$, define for $x\in\R$ and $t\in\R$
\begin{equation*}
\begin{split}
v_{n}(t,x)&=v(t+t_{n},x+x_{n}),\\
\phi_{n}(t,x)&=\phi(t+t_{n},x+x_{n})=v_{n}(t,x-h_{0})-v_{n}(t,x).
\end{split}
\end{equation*}
We see that there is some subsequence, still denoted by $n$, and a function $\tilde{v}$ such that $v_{n}$ and $v_{n}(\cdot,\cdot-h_{0})$ converge locally uniformly to $\tilde{v}$ and $\tilde{v}(\cdot,\cdot-h_{0})$, respectively. Of course, $\phi_{n}$ converges locally uniformly to $\tilde{\phi}:=\tilde{v}(\cdot,\cdot-h_{0})-\tilde{v}$. Moreover, as $\phi_{n}(0,0)\ra0$ as $n\ra\infty$ and $\phi_{n}\geq0$ satisfying $(\phi_{n})_{t}=(\phi_{n})_{xx}+a_{n}(t,x)\phi_{n}$ with $a_{n}(t,x)=\frac{f(t+t_{0},v_{n}(t,x-h_{0}))-f(t+t_{0},v_{n}(t,x))}{v_{n}(t,x-h_{0})-v_{n}(t,x)}$ bounded, we conclude from the Harnack inequality that $\tilde{\phi}\equiv0$. This yields
\begin{equation}\label{space-periodicity}
\tilde{v}(\cdot,\cdot-h_{0})=\tilde{v}.
\end{equation}
However, since $\{(t_{n},x_{n})\}_{n\in\N}\subset\tilde{\Om}$, the uniform-in-time limits in the defintion of generalized traveling waves implies the uniform-in-$n$ limits
\begin{equation*}
\lim_{x\ra-\infty}v_{n}(0,x)=1\quad\text{and}\quad\lim_{x\ra\infty}v_{n}(0,x)=0.
\end{equation*}
This yields $\lim_{x\ra-\infty}\tilde{v}(0,x)=1$ and $\lim_{x\ra\infty}\tilde{v}(0,x)=0$, which contradicts \eqref{space-periodicity}. Hence, \eqref{a-claim-space-mono} holds.

From \eqref{a-claim-space-mono}, we are able to find some $\de_{0}>0$ such that if $\de\in(0,\de_{0}]$ then
\begin{equation}\label{shifted-infimum}
\inf_{(t,x)\in\tilde{\Om}}(v(t,x-(h_{0}-\de))-v(t,x))\geq0.
\end{equation}
Indeed, due to the uniform boundedness of $v_{x}$, i.e., $\sup_{(t,x)\in\R^{2}}|v_{x}(t,x)|<\infty$, there exists some $\de_{0}\in(0,h_{0})$ such that for any $\de\in(0,\de_{0}]$ there holds
\begin{equation*}
v(t,x+\de-h_{0})-v(t,x-h_{0})\in\Big[-\frac{r_{0}}{2},\frac{r_{0}}{2}\Big],\quad (t,x)\in\tilde{\Om},
\end{equation*}
where $r_{0}=\inf_{(t,x)\in\tilde{\Om}}(v(t,x-h_{0})-v(t,x))>0$ by \eqref{a-claim-space-mono}. It then follows that for any $\de\in(0,\de_{0}]$
\begin{equation*}
v(t,x+\de-h_{0})-v(t,x)=v(t,x+\de-h_{0})-v(t,x-h_{0})+v(t,x-h_{0})-v(t,x)\geq\frac{r_{0}}{2}
\end{equation*}
for all $(t,x)\in\tilde{\Om}$. This establishes \eqref{shifted-infimum}.

Now, using \eqref{shifted-infimum}, we can repeat the arguments for $\Om_{2}$ and $\Om_{3}$ as in the proof of Lemma \ref{lem-space-monotonicity} to conclude
\begin{equation*}
\inf_{(t,x)\in\Om_{2}}(v(t,x-(h_{0}-\de))-v(t,x))\geq0\quad\text{and}\quad\inf_{(t,x)\in\Om_{3}}(v(t,x-(h_{0}-\de))-v(t,x))\geq0
\end{equation*}
for any $\de\in(0,\de_{0}]$. This together with \eqref{a-claim-space-mono} and \eqref{shifted-infimum} implies that for any $h\geq h_{0}-\de_{0}$ there holds
$v(t,x)\leq v(t,x-h)$ for $x\in\R$ and $t\in\R$, which contradicts the minimality of $h_{0}$. Hence, $h_{0}=0$ and the result follows.
\end{proof}

\subsection{Exponential Decay Ahead of Interface}\label{subsec-exponential-decay}

Consider a generalized traveling wave $v(t,x)$ of \eqref{main-eqn} with interface location function $\xi^{v}(t)$. In this subsection, we study the exponential decay of $v(t,x)$ ahead of the interface and prove Theorem \ref{thm-monotoncity-exponential-decay}$\rm(ii)$. With Theorem \ref{thm-monotoncity-exponential-decay}$\rm(i)$, we understand that $v(t,x)$ is strictly decreasing in $x$ for any $t\in\R$. We first prove two lemmas.

For $\la\in(0,1)$, let $\xi^{v}_{\la}:\R\ra\R$ be such that $v(t,\xi^{v}_{\la}(t))=\la$ for all $t\in\R$. By  Theorem \ref{thm-monotoncity-exponential-decay}$\rm(i)$, $\xi^{v}_{\la}$ is well-defined and continuously differentiable. Define $\psi^{v}_{\la}:\R\times\R\ra(0,1)$ by setting $\psi^{v}_{\la}(t,x)=v(t,x+\xi^{v}_{\la}(t))$. We show

\begin{lem}\label{lem-property-xi-v}
For each $\la\in(0,1)$, there hold
\begin{itemize}
\item[\rm(i)] $\psi^{v}_{\la}(t,x)$ is a profile of $v(t,x)$, that is,
\begin{equation*}
\lim_{x\ra-\infty}\psi^{v}_{\la}(t,x)=1,\,\,\lim_{x\ra\infty}\psi^{v}_{\la}(t,x)=0\,\,\text{uniformly in}\,\,t\in\R;
\end{equation*}

\item[\rm(ii)] there exists $L_{\la}>0$ such that $|\xi^{v}(t)-\xi^{v}_{\la}(t)|\leq L_{\la}$ for all $t\in\R$.
\end{itemize}
\end{lem}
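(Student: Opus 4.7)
The plan is to prove (ii) first and then derive (i) as an immediate consequence, since part (i) is essentially the uniform asymptotic behavior in the definition of generalized traveling wave combined with a bounded shift.

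For part (ii), I would use the uniformity of the limits in Definition \ref{def-generalized-critical} together with the strict space monotonicity guaranteed by Theorem \ref{thm-monotoncity-exponential-decay}(i). Since $v(t,x+\xi^v(t))\to 1$ as $x\to-\infty$ uniformly in $t$, I can pick $M_1=M_1(\la)>0$ such that $v(t,x+\xi^v(t))>\la$ for all $x\le -M_1$ and $t\in\R$; similarly, from the uniform limit at $+\infty$, pick $M_2=M_2(\la)>0$ such that $v(t,x+\xi^v(t))<\la$ for all $x\ge M_2$ and $t\in\R$. By the strict monotonicity of $v(t,\cdot)$ and the defining relation $v(t,\xi^v_\la(t))=\la$, these two inequalities translate into
\begin{equation*}
-M_1+\xi^v(t)<\xi^v_\la(t)<M_2+\xi^v(t),\quad t\in\R,
\end{equation*}
which immediately yields $|\xi^v(t)-\xi^v_\la(t)|\le L_\la:=\max\{M_1,M_2\}$.

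For part (i), write $\psi^v_\la(t,x)=v(t,x+\xi^v_\la(t))=v\bigl(t,(x+d_\la(t))+\xi^v(t)\bigr)$, where $d_\la(t):=\xi^v_\la(t)-\xi^v(t)$ is bounded by $L_\la$ thanks to (ii). Given $\epsilon>0$, the uniform limit in Definition \ref{def-generalized-critical} provides $N=N(\epsilon)>0$ such that $v(t,y+\xi^v(t))>1-\epsilon$ whenever $y\le -N$ and $1-v(t,y+\xi^v(t))<\epsilon$ is replaced by $v(t,y+\xi^v(t))<\epsilon$ whenever $y\ge N$, uniformly in $t$. For $x\le -N-L_\la$ we have $x+d_\la(t)\le -N$ for every $t$, so $\psi^v_\la(t,x)>1-\epsilon$; symmetrically, for $x\ge N+L_\la$ we have $\psi^v_\la(t,x)<\epsilon$. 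Since $\epsilon$ is arbitrary, this gives the two uniform limits claimed in (i).

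There is no real obstacle here: the lemma is a direct book-keeping consequence of the uniformity built into the definition of generalized traveling wave and the strict monotonicity just established in Theorem \ref{thm-monotoncity-exponential-decay}(i); the only point that needs a word of care is that the bounds $M_1,M_2$ (and hence $L_\la$) are allowed to depend on $\la$ but not on $t$, which is exactly what the uniform-in-$t$ limits provide.
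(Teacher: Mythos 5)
Your proof is correct. The key step — pinning $\xi^v_\la(t)$ between $\xi^v(t)-M_1$ and $\xi^v(t)+M_2$ using the uniform limits of $v(t,\cdot+\xi^v(t))$ together with strict monotonicity — is sound, and part (i) then follows by a bounded shift of the argument, as you say.

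Your route differs from the paper's in order and flavor. The paper proves (i) first: it uses monotonicity and the uniform limits to bound $\xi^v_{\la_1}(t)-\xi^v_{\la_2}(t)$ for pairs $\la_1<\la_2$, and from that extracts the two uniform limits of $\psi^v_{\la_0}$. It then proves (ii) by a contradiction/compactness argument: if $|\xi^v(t_n)-\xi^v_\la(t_n)|\to\infty$, plug $x_n=\tfrac12(\xi^v(t_n)-\xi^v_\la(t_n))$ into the identity $\psi^v(t,x)=\psi^v_\la(t,x-\xi^v_\la(t)+\xi^v(t))$ and use the uniform limits of $\psi^v$ and $\psi^v_\la$ to obtain $0=1$. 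Your argument instead proves (ii) directly — no contradiction, no auxiliary identity — and derives (i) as a corollary. What the direct argument buys is transparency and an explicit value for $L_\la=\max\{M_1,M_2\}$; what the paper's order buys is that (i) is proved for all levels $\la_0\in(0,1)$ relative to one another, without ever invoking $\xi^v(t)$, which is perhaps natural since $\xi^v(t)$ is an arbitrary (and a priori not even continuous) interface location from Definition~\ref{def-generalized-critical}. Both routes are valid once Theorem~\ref{thm-monotoncity-exponential-decay}(i) is in hand, which is the case at this point in the paper.
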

\begin{proof}
$\rm(i)$ From the definition of the generalized traveling wave and the space monotonicity, for any $0<\la_{1}<\la_{2}<1$, there exists $L_{\la_{1},\la_{2}}>0$ such that $\xi^{v}_{\la_{1}}(t)-\xi^{v}_{\la_{2}}(t)\leq L_{\la_{1},\la_{2}}$ for all $t\in\R$.

Fix any $\la_{0}\in(0,1)$. Then, for any $\la>\la_{0}$, we have for all $x\leq-L_{\la_{0},\la}$
\begin{equation*}
\psi^{v}_{\la_{0}}(t,x)=v(t,x+\xi^{v}_{\la_{0}}(t))\geq v(t,\xi^{v}_{\la_{0}}(t)-L_{\la_{0},\la})\geq v(t,\xi_{\la}(t))=\la.
\end{equation*}
This shows the uniform-in-time limit $\lim_{x\ra-\infty}\psi_{\la_{0}}(t,x)=1$. Similarly, we have the limit $\lim_{x\ra\infty}\psi_{\la_{0}}(t,x)=0$ uniformly in $t\in\R$. This proves $\rm(i)$.

$\rm(ii)$ Since $\psi^{v}(t,x-\xi^{v}(t))=v(t,x)=\psi^{v}_{\la}(t,x-\xi_{\la}^{v}(t))$, we have
\begin{equation}\label{profile-equality}
\psi^{v}(t,x)=\psi^{v}_{\la}(t,x-\xi_{\la}^{v}(t)+\xi^{v}(t)).
\end{equation}
Now, suppose there exists $\{t_{n}\}_{n\in\N}$ with $|t_{n}|\ra\infty$ as $n\ra\infty$ such that $|\xi^{v}(t_{n})-\xi^{v}_{\la}(t_{n})|\ra\infty$ as $n\ra\infty$. Then, there must be a subsequence, still denoted by $\{t_{n}\}_{n\in\N}$, such that either $\xi^{v}(t_{n})-\xi_{\la}^{v}(t_{n})\ra\infty$ as $n\ra\infty$ or $\xi^{v}(t_{n})-\xi_{\la}^{v}(t_{n})\ra-\infty$ as $n\ra\infty$. Suppose the former is true (the case the later holds can be treated similarly), then setting $t=t_{n}$ and $x_{n}=\frac{\xi^{v}(t_{n})-\xi_{\la}^{v}(t_{n})}{2}$ in \eqref{profile-equality}, we have $\psi^{v}(t_{n},x_{n})=\psi_{\la}^{v}(t_{n},-x_{n})$. We then conclude from uniform-in-time limits of $\psi^{v}(t,x)$ and $\psi_{\la}^{v}(t,x)$ as $x\ra\pm\infty$ that
\begin{equation*}
0=\lim_{n\ra\infty}\psi^{v}(t_{n},x_{n})=\lim_{n\ra\infty}\psi_{\la}^{v}(t_{n},-x_{n})=1.
\end{equation*}
It's a contradiction. Hence, $\xi^{v}(t)-\xi^{v}_{\la}(t)$ remains bounded as $t$ varies in $\R$.
\end{proof}

We next show the rightward propagation nature of $\xi^{v}(t)$, and hence, of $\xi^{v}_{\la}(t)$ for all $\la\in(0,1)$.

\begin{lem}\label{lem-interface-propagation-improved-general}
There exist constants $C^{v}_{1}>0$, $C^{v}_{2}>0$ and $d^{v}>0$, and a twice continuously differentiable function $\tilde{\xi}^{v}:\R\ra\R$ satisfying
\begin{equation*}
\begin{split}
\frac{c_{B}}{2}\leq\dot{\tilde{\xi}}^{v}(t)\leq C^{v}_{1},\quad t\in\R,\\
|\ddot{\tilde{\xi}}^{v}(t)|\leq C^{v}_{2},\quad t\in\R
\end{split}
\end{equation*}
such that
\begin{equation*}
|\tilde{\xi}^{v}(t)-\xi^{v}(t)|\leq d^{v}_{},\quad t\in\R,
\end{equation*}
where $c_{B}>0$ the speed of traveling waves of \eqref{eqn-bi-1}. In particular, for any $\la\in(0,1)$, there exists $L_{\la}>0$ such that $|\tilde{\xi}^{v}(t)-\xi^{v}_{\la}(t)|\leq L_{\la}$ for all $t\in\R$.
\end{lem}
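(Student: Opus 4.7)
The plan is to adapt the proof of Proposition \ref{prop-interface-rightward-propagation-improved} to the general generalized traveling wave $v(t,x)$. Two modifications are needed: first, since we no longer have the structural properties of $u^f$ from Proposition \ref{prop-transition-wave}, the rightward propagation estimate must be re-derived directly from Definition \ref{def-generalized-critical} and Theorem \ref{thm-monotoncity-exponential-decay}(i); second, to obtain twice continuous differentiability of $\tilde\xi^v$ with bounded second derivative, the corner-smoothing function $\de$ used in the proof of Lemma \ref{lem-interface-rightward-propagation-improved} will be replaced by a $C^2$ one, which, as noted in the remark after that lemma, is equally elementary to construct.

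The first and main step is to establish the analog of Lemma \ref{lem-interface-rightward-propagation}, namely
\begin{equation*}
\tfrac{3c_B}{4}(t-t_0-t_B)\;\le\;\xi^v_\theta(t)-\xi^v_\theta(t_0)\;\le\;\tfrac{5c_I}{4}(t-t_0+t_I),\qquad t\ge t_0,
\end{equation*}
uniformly in $t_0\in\R$, where $\xi^v_\theta$ is the continuously differentiable interface location at level $\theta$ provided by $v_x<0$ from Theorem \ref{thm-monotoncity-exponential-decay}(i) together with the implicit function theorem. For the lower bound, the uniform-in-time limit in Definition \ref{def-generalized-critical} supplies $L>0$ with $v(t,x+\xi^v_\theta(t))\ge\frac{1+\theta_*}{2}$ for all $x\le -L$ and $t\in\R$; a continuous nonincreasing $\psi_*$ equal to $\frac{1+\theta_*}{2}$ on $(-\infty,-L-1]$ and vanishing on $[-L,\infty)$ then satisfies $\psi_*(x)\le v(t_0,x+\xi^v_\theta(t_0))$ uniformly in $t_0$, and the Fife--McLeod stability of the bistable wave $\phi_B$ of \eqref{eqn-bi-1} drives the argument exactly as in Step 1 of the proof of Lemma \ref{lem-interface-rightward-propagation}. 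The upper bound is symmetric: picking $\theta_I\in(0,\theta/2)$ and a continuous nonincreasing $\psi^*$ equal to $1$ on $(-\infty,0]$ and to $\theta_I$ on $[x_0,\infty)$ produces a function dominating $v(t_0,\cdot+\xi^v_{\theta_I}(t_0))$ uniformly in $t_0$, and stability of ignition waves of \eqref{eqn-ig-homogeneous193213} gives the bound for $\xi^v_{\theta_I}$, which transfers to $\xi^v_\theta$ via Lemma \ref{lem-property-xi-v}(ii).

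Once this estimate is in hand, the iterative construction in the proof of Lemma \ref{lem-interface-rightward-propagation-improved} applies verbatim with $\xi^v_\theta$ in place of $\xi$, yielding for each $t_0\in\R$ a piecewise linear $\tilde\xi^v_{t_0}$ of slope $c_B/2$ satisfying $0\le\tilde\xi^v_{t_0}(t)-\xi^v_\theta(t)\le C_0+\tfrac{3}{4}c_B t_B$ on $[t_0,\infty)$. Smoothing the corners using a $C^2$ function $\de$ with $\dot\de(-\de_*)=\dot\de(0)=c_B/2$, $\ddot\de(-\de_*)=\ddot\de(0)=0$, $\dot\de\ge c_B/2$ on $(-\de_*,0)$, and $|\ddot\de|$ uniformly bounded (e.g.\ a fifth-degree Hermite polynomial) produces a twice continuously differentiable $\xi^v_{t_0}$ with $\dot\xi^v_{t_0}\in[c_B/2,C^v_1]$ and $|\ddot\xi^v_{t_0}|\le C^v_2$, both bounds uniform in $t_0\le 0$.

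Finally, passing $t_0\to-\infty$ and applying Arzel\`a--Ascoli with a diagonal argument to $\{\xi^v_{t_0}\}$, $\{\dot\xi^v_{t_0}\}$, and $\{\ddot\xi^v_{t_0}\}$---using their local uniform equicontinuity granted by the uniform bounds, and the local boundedness of $\xi^v_{t_0}$ inherited from $\xi^v_\theta$---yields a subsequence along which all three converge locally uniformly to $\tilde\xi^v$ and its first two derivatives; the required bounds and the estimate $|\tilde\xi^v-\xi^v_\theta|\le d^v/2$ pass to the limit. Combining with $\sup_t|\xi^v_\theta-\xi^v|<\infty$ from Lemma \ref{lem-property-xi-v}(ii) gives the bound relative to $\xi^v$, and the last assertion about $\xi^v_\la$ is again Lemma \ref{lem-property-xi-v}(ii). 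The chief technical hurdle is the first step: ensuring that the comparison arguments yield bounds genuinely uniform in $t_0\in\R$, which is precisely what the uniform-in-time limits from Definition \ref{def-generalized-critical} are designed to supply.
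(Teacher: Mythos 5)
Your proposal is correct and follows essentially the same route as the paper: establish the two-sided propagation estimate for a fixed level curve of $v$ by trapping $v(t,\cdot+\xi^v_\cdot(t_0))$ between a bistable subsolution and an ignition supersolution (via Fife--McLeod stability and comparison), run the piecewise-linear construction of Lemma \ref{lem-interface-rightward-propagation-improved} on that level curve, smooth the corners with a $C^2$ junction of uniformly bounded first and second derivatives, and let $t_0\to-\infty$ via Arzel\`a--Ascoli and a diagonal argument, finishing with Lemma \ref{lem-property-xi-v}(ii). The only (cosmetic) deviation is that you work with the level $\theta$ and pick the sub-barrier height $\tfrac{1+\theta_*}{2}$ using Lemma \ref{lem-property-xi-v}(i), whereas the paper works at a level $\lambda_0\in(\theta,1)$ with the constant-height barrier $\psi_*\equiv\lambda_0$ to the left, which makes the inequality $\psi_*\le v(t_0,\cdot+\xi^v_{\lambda_0}(t_0))$ immediate from monotonicity alone.
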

\begin{proof}
Since we have no idea about the value $v(t,\xi^{v}(t))$ and do not know whether $\xi^{v}(t)$ is continuous, instead of modifying $\xi^{v}(t)$, we modify $\xi_{\la_{0}}^{v}(t)$ for some $\la_{0}\in(\theta,1)$ as we did in Proposition \ref{prop-interface-rightward-propagation-improved} for $\xi(t)$. We sketch the proof within four steps.

\paragraph{\textbf{Step 1}} There are $t^{v}_{B}>0$ and $t^{v}_{I}>0$ such that
\begin{equation}\label{propagate-estimate-generalized}
\frac{3c_{B}}{4}(t-t_{0}-t^{v}_{B})\leq \xi_{\la_{0}}^{v}(t)-\xi_{\la_{0}}^{v}(t_{0})\leq \frac{5c_{I}}{4}(t-t_{0}-t^{v}_{I}),\quad t\geq t_{0}.
\end{equation}
The proof follows from the arguments as in the proof of Lemma \ref{lem-interface-rightward-propagation}. The only difference is that we need to use a different $\psi_{*}$, which in this case can be taken as a uniformly continuous and nonincreasing function satisfying
\begin{equation*}
\psi_{*}(x)=
\begin{cases}
\la_{0},&\quad x\leq x_{0},\\
0,&\quad x\geq0.
\end{cases}
\end{equation*}
for some fixed $x_{0}<0$. We then have $\psi_{*}\leq u(t_{0},\cdot+\xi_{\la_{0}}(t_{0}))$ for all $t_{0}\in\R$. The rest of the proof proceeds as in the proof of Lemma \ref{lem-interface-rightward-propagation}.

\paragraph{\textbf{Step 2}} Let $C_{0}>\frac{5}{4}c_{I}t^{v}_{I}$. Let $t_{0}\in\R$. There are a sequence $\{T_{n-1}(t_{0})\}_{n\in\N}$ with $T_0(t_{0})=t_0$ and functions
\begin{equation*}
\eta(t;T_{n-1}(t_{0}))=\xi_{\la_{0}}^{v}(T_{n-1}(t_{0}))+C_{0}+\frac{c_{B}}{2}(t-T_{n-1}(t_{0})),\,\, t\in[T_{n-1}(t_{0}),T_{n}(t_{0})]
\end{equation*}
for $n\in\N$ such that the following hold: for any $n\in\N$
\begin{itemize}
\item[\rm(i)] $T_{n}(t_{0})-T_{n-1}(t_{0})\in[T_{\min},T_{\max}]$, where $0<T_{\min}<T_{\max}<\infty$ depend only on $c_{B}$, $t_{B}^{v}$, $c_{I}$ and $t_{I}^{v}$;

\item[\rm(ii)] $\xi_{\la_{0}}^{v}(t)<\eta(t;T_{n-1}(t_{0}))\,\,\text{for}\,\,t\in[T_{n-1}(t_{0}),T_{n}(t_{0}))$\\
 and $\xi_{\la_{0}}^{v}(T_{n}(t_{0}))=\eta(T_{n}(t_{0});T_{n-1}(t_{0}))$;

\item[\rm(iii)] $0\leq\eta(t;T_{n-1}(t_{0}))-\xi_{\la_{0}}^{v}(t)\leq C_{0}+\frac{3}{4}c_{B}t^{v}_{B}$ for $t\in[T_{n-1}(t_{0}),T_{n}(t_{0})]$.
\end{itemize}
The proof follows in the same way as that of Lemma \ref{lem-interface-rightward-propagation-improved}, since we only need \eqref{propagate-estimate-generalized} and the continuity of $\xi^{v}_{\la_{0}}(t)$ as remarked after the proof of Lemma \ref{lem-interface-rightward-propagation}. The lower bound and the upper bound in $\rm(i)$ follow from and \eqref{propagate-estimate-generalized}.

\textbf{Step 3.} Let $t_{0}\in\R$. There exist constants $C_{1}^{v}>0$, $C_{2}^{v}>0$ and $d^{v}>0$, and a twice continuously differentiable function $\tilde{\xi}^{v}(\cdot;t_{0}):[t_{0},\infty)\ra\R$ satisfying for $t\geq t_{0}$
\begin{equation*}
\frac{c_{B}}{2}\leq\dot{\tilde{\xi}}^{v}(t;t_{0})\leq C_{1}^{v},\quad |\ddot{\tilde{\xi}}^{v}(t;t_{0})|\leq C_{2}^{v}\quad\text{and}\quad0\leq\tilde{\xi}^{v}(t;t_{0})-\xi^{v}_{\la_{0}}(t)\leq d^{v}.
\end{equation*}
Moreover, $\{\dot{\tilde{\xi}}^{v}(\cdot;t_{0})\}_{t_{0}\leq0}$ and $\{\ddot{\tilde{\xi}}^{v}(\cdot;t_{0})\}_{t_{0}\leq0}$ are uniformly bounded and uniformly Lipschitz continuous.

To see this, we first define $\xi^{v}(\cdot;t_{0}):[t_{0},\infty)\ra\R$ by setting
\begin{equation*}
\xi^{v}(t;t_{0})=\eta(t;T_{n-1}(t_{0})),\quad t\in[T_{n-1}(t_{0}),T_{n}(t_{0})),\,\,n\in\N.
\end{equation*}
We then modify $\xi^{v}(t;t_{0})$ as in the proof of Lemma \ref{lem-interface-rightward-propagation-improved} to get $\tilde{\xi}^{v}(t;t_{0})$. For the modification, it concerns some function $\de(t)$ as in the proof of Lemma \ref{lem-interface-rightward-propagation-improved}, which is only required to be continuously differentiable there, but clearly we can make it twice continuously differentiable with uniformly bounded first and second derivatives as remarked after the proof of Lemma \ref{lem-interface-rightward-propagation-improved}.

\textbf{Step 4.} By \textbf{Step 3}, Arzel\`{a}-Ascoli theorem and the diagonal argument, we can argue as in the proof of Proposition \ref{prop-interface-rightward-propagation-improved} to conclude that $\{\tilde{\xi}^{v}(\cdot;t_{0})\}_{t_{0}\leq0}$ converges locally uniformly to some twice continuously differentiable function $\tilde{\xi}^{v}:\R\ra\R$ satisfying $\frac{c_{B}}{2}\leq\dot{\tilde{\xi}}^{v}(t)\leq C_{1}^{v}$, $|\ddot{\tilde{\xi}}^{v}(t)|\leq C_{2}^{v}$ and $0\leq\tilde{\xi}^{v}(t)-\xi^{v}_{\la_{0}}(t)\leq d^{v}$ for $t\in\R$. The lemma then follows from $\sup_{t\in\R}|\xi^{v}(t)-\xi^{v}_{\la_{0}}(t)|<\infty$ due to Lemma \ref{lem-property-xi-v}.
\end{proof}

We now prove Theorem \ref{thm-monotoncity-exponential-decay}$\rm(ii)$.

\begin{proof}[Proof of Theorem \ref{thm-monotoncity-exponential-decay}(ii)]
By Lemma \ref{lem-property-xi-v}$\rm(ii)$ and Lemma \ref{lem-interface-propagation-improved-general}, there holds $L_{\theta}:=\sup_{t\in\R}|\tilde{\xi}^{v}(t)-\xi_{\theta}^{v}(t)|<\infty$. For $t\in\R$, we define
\begin{equation*}
\hat{\xi}^{v}(t)=\tilde{\xi}^{v}(t)+L_{\theta}.
\end{equation*}
Then, $\hat{\xi}^{v}$ satisfies all the properties for $\tilde{\xi}^{v}$ as in Lemma \ref{lem-interface-propagation-improved-general}. In particular, it satisfies the properties as in the statement of the theorem. It remains to show that $v(t,x)$ is exponential decay ahead of $\hat{\xi}^{v}(t)$ uniformly in $t\in\R$.

Set $\hat{v}(t,x)=v(t,x+\hat{\xi}^{v}(t))$ for $x\geq0$ and $t\in\R$. Since $\hat{\xi}^{v}(t)\geq\xi_{\theta}^{v}(t)$ by the definition, we obtain from monotonicity that $v(t,x+\hat{\xi}^{v}(t))\leq\theta$, and hence $f(t,v(t,x+\hat{\xi}^{v}(t)))=0$ and $\hat{v}(t,x)=\frac{1}{\theta}v(t,x+\hat{\xi}^{v}(t))\leq1$ for all $x\geq0$ and $t\in\R$. We then readily check that
$\hat{v}(t,x)$ satisfies
\begin{equation*}
\begin{cases}
\hat{v}_{t}=\hat{v}_{xx}+\dot{\hat{\xi}}\hat{v}_{x},\quad x\geq0,\,\,t\in\R,\\
\hat{v}(t,0)\leq1,\quad t\in\R,\\
\lim_{x\ra\infty}\hat{v}(t,x)=0\,\,\text{uniformly in}\,\,t\in\R.
\end{cases}
\end{equation*}
Now, define
\begin{equation*}
\phi(t,x)=e^{-\frac{c_{B}}{2}x}-\hat{v}(t,x),\quad x\geq0,\,\,t\in\R.
\end{equation*}
Since $\dot{\hat{\xi}}(t)=\dot{\tilde{\xi}}(t)\geq\frac{c_{B}}{2}$ for all $t\in\R$ by Lemma \ref{lem-interface-propagation-improved-general}, we see that $\phi(t,x)$ satisfies
\begin{equation*}
\begin{cases}
\phi_{t}\geq\phi_{xx}+\dot{\hat{\xi}}\phi_{x},\quad x\geq0,\,\,t\in\R,\\
\phi(t,0)\geq0,\quad t\in\R,\\
\lim_{x\ra\infty}\phi(t,x)=0\,\,\text{uniformly in}\,\,t\in\R.
\end{cases}
\end{equation*}

We claim that $\phi(t,x)\geq0$ for $x\geq0$ and $t\in\R$. For contradiction, suppose this is not the case, that is,
\begin{equation*}
r_{0}:=\inf_{x\geq0,t\in\R}\phi(t,x)<0.
\end{equation*}
Let $\{(t_{n},x_{n})\}_{n\in\N}\subset\R\times[0,\infty)$ be such that $\phi(t_{n},x_{n})\leq r_{0}(1-\frac{1}{2n})$ for all $n\in\N$. For $n\in\N$, let $d_{n}=\text{dist}((t_{n},x_{n}),\R\times\{x=0\})=x_{n}$. Since $\phi(t_{n},x_{n})\in(r_{0},\frac{r_{0}}{2}]$ for all $n\in\N$ and $\lim_{x\ra\infty}\phi(t,x)=0$ uniformly in $t\in\R$, there's $\tilde{d}_{0}>0$ such that $d_{n}=x_{n}\leq\tilde{d}_{0}$ for all $n\in\N$. Moreover, the uniform estimate $\sup_{x\geq0,t\in\R}|\phi_{x}(t,x)|<\infty$ and the fact $\inf_{t\in\R}\phi(t,0)\geq0$ ensure the existence of some $\hat{d}_{0}>0$ such that $d_{n}=x_{n}\geq\hat{d}_{0}$ for all $n\in\N$. Hence, $d_{n}\in[\hat{d}_{0},\tilde{d}_{0}]$ for all $n\in\N$. Clearly,
\begin{equation*}
\phi_{t}\geq\phi_{xx}+\dot{\hat{\xi}}\phi_{x}\quad\text{for}\quad(t,x)\in B_{n},
\end{equation*}
where
\begin{equation*}
B_{n}=\Big\{(t,x)\in\R^{2}\Big|(t-t_{n})^{2}+(x-x_{n})^{2}\leq d_{n}^{2}\,\,\text{and}\,\,t\leq t_{n}\Big\}\subset\R\times[0,\infty).
\end{equation*}
For $n\in\N$, set $\xi_{n}:=\dot{\hat{\xi}}(\cdot+t_{n})$ and $\phi_{n}:=\phi(\cdot+t_{n},\cdot+x_{n})$. Then, we can find some subsequence, still denoted by $n$, such that $d_{n}\ra d_{0}$ as $n\ra\infty$ for some $d_{0}\in[\hat{d}_{0},\tilde{d}_{0}]$, $\{\xi_{n}\}_{n\in\N}$ converges to some $\bar{\xi}$ uniformly on  $B_{0}:=\{(t,x)\in\R^{2}|t^{2}+x^{2}\leq d_{0}^{2}\,\,\text{and}\,\,t\leq0\}$ (where we used $|\ddot{\hat{\xi}}^{v}(t)|=|\ddot{\tilde{\xi}}^{v}(t)|\leq C^{v}_{2}$ for $t\in\R$ by Lemma \ref{lem-interface-propagation-improved-general}), and $\{\phi_{n}\}_{n\in\N}$ converges to some $\bar{\phi}$ uniformly on $B_{0}$. Moreover, $\bar{\phi}$ satisfies
\begin{equation*}
\bar{\phi}_{t}\geq\bar{\phi}_{xx}+\bar{\xi}\bar{\phi}_{x}\quad\text{for}\quad(t,x)\in B_{0}
\end{equation*}
and, on $B_{0}$, $\bar{\phi}$ attains its minimum $r_{0}$ at $(0,0)$. Maximum principle then implies that $\bar{\phi}\equiv r_{0}$ on $B_{0}$. However, since $(t_{n},0)\in\partial B_{n}$ for all $n\in\N$ and $\phi(t_{n},0)\geq0$, we obtain that $\phi\geq0$ at some point on $\partial B_{0}$. It's a contradiction. Hence, $\inf_{x\geq0,t\in\R}\phi(t,x)\geq0$ and the proof is complete.
\end{proof}

%%%%%%%%%%%%%%%%%%%%%%%%%%%%%%%%%%%%%%%%%%%%

\section{Uniqueness of Generalized Traveling Waves}\label{sec-uniqueness}

In this section, we investigate the uniqueness of generalized traveling waves of \eqref{main-eqn}. Throughout this section, we assume $u^f(t,x)$ is the generalized traveling wave of \eqref{main-eqn} in Proposition \ref{prop-transition-wave} and write $u^f(t,x)$ and $\xi^f(t)$ as $u(t,x)$ and $\xi(t)$, respectively. We assume that $v(t,x)$ is an arbitrary generalized traveling wave of \eqref{main-eqn}. Hence, $v(t,x)$ satisfies all properties as in Section \ref{sec-property-gtw}.

Recall that $\xi_{\theta}^{v}:\R\ra\R$ and $\xi:\R\ra\R$ are such that $v(t,\xi_{\theta}^{v}(t))=\theta=u(t,\xi(t))$ for all $t\in\R$. By considering space translations of $v(t,x)$ and $u(t,x)$, we may assume, without loss of generality, that $\xi_{\theta}^{v}(0)=0=\xi(0)$, that is, $v(0,0)=\theta=u(0,0)$. We will assume this normalization as well as $\rm(H1)$-$\rm(H3)$ in the rest of this section, and therefore, Theorem \ref{thm-uniqueness-introduction} reads

\begin{thm}\label{thm-uniqueness-normalized}
There holds $v(t,x)=u(t,x)$ for all $x\in\R$ and $t\in\R$.
\end{thm}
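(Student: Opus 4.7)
The plan is to apply the stability theorem (Theorem \ref{thm-stability-tw}) to suitably shifted time-slices of $v$ with initial time $t_0$ pushed to $-\infty$, forcing $v$ to coincide with a translate of $u$ at every fixed time; the normalization $v(0,0)=u(0,0)=\theta$ then identifies the translate with the identity. The preparatory step is to set $\si(t_0):=\xi_\theta^v(t_0)-\xi(t_0)$, which is well-defined by strict monotonicity, and consider the shifted initial data $u_0(x):=v(t_0,x+\si(t_0))$ whose $\theta$-level set already matches that of $u(t_0,\cdot)$. The key point is that the hypothesis of Theorem \ref{thm-stability-tw} then holds for $u_0$ with \emph{constants independent of $t_0$}. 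For $x\geq\xi(t_0)$ I would combine Proposition \ref{prop-transition-wave}(iii) with Theorem \ref{thm-monotoncity-exponential-decay}(ii), using the uniform bound $L^*:=\sup_t|\hat\xi^v(t)-\xi_\theta^v(t)|<\infty$ (a consequence of Theorem \ref{thm-monotoncity-exponential-decay}(ii) together with Lemma \ref{lem-property-xi-v}(ii)), to get $|u_0(x)-u(t_0,x)|\leq 2\theta e^{\hat c L^*}e^{-\al_0(x-\xi(t_0))}$ with $\al_0:=\min\{\hat c,c_0\}$; for $x\leq\xi(t_0)$ the trivial estimate $|u_0-u|\leq 1\leq e^{-\al_0(x-\xi(t_0))}$ suffices.

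By uniqueness of the initial value problem in $C_{\rm unif}^b(\R,\R)$, the solution produced by Theorem \ref{thm-stability-tw} from $u_0$ at $t_0$ is $v(t,\cdot+\si(t_0))$, so substituting $y=x+\si(t_0)$ yields
\begin{equation*}
\sup_{y\in\R}|v(t,y)-u(t,y-Z(t_0))|\leq \bar C\,e^{-r(t-t_0)},\qquad t\geq t_0,
\end{equation*}
with $Z(t_0):=\si(t_0)+\zeta_*(t_0)$ and $\bar C,r>0$ depending only on $\al_0$ (tracing through the proof of Proposition \ref{prop-stability}: once the initial two-sided gap in \eqref{initial-condition-two-sided} is uniform in $t_0$, so are the transient time $T$ and the prefactor). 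Applying the estimate at two base times $t_0<t_0'$ and sending $t\to\infty$ yields $|u(t,y-Z(t_0))-u(t,y-Z(t_0'))|\to 0$; evaluating at $y=\xi(t)+Z(t_0)$ and invoking uniform steepness (Proposition \ref{prop-transition-wave}(ii)) forces $Z(t_0)=Z(t_0')=:Z_*$ independent of $t_0$. Then fixing any $t\in\R$ and letting $t_0\to-\infty$ makes the right-hand side vanish, so $v(t,y)=u(t,y-Z_*)$ for all $y\in\R$, and since $t$ was arbitrary, this identity holds on all of $\R\times\R$. Finally the normalization $\theta=v(0,0)=u(0,-Z_*)$ together with the strict monotonicity of $u(0,\cdot)$ at its unique $\theta$-value $0$ forces $Z_*=0$, i.e.\ $v\equiv u$.

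The principal obstacle is securing the $t_0$-uniformity of the stability constants: a priori the drift $|\xi_\theta^v(t_0)-\xi(t_0)|$ could grow linearly in $|t_0|$ (the individual interface speeds are only bracketed in a fixed positive interval), which without the pre-shift would inflate the initial gap in \eqref{initial-condition-two-sided}, blow up the transient time $T$ of Proposition \ref{prop-stability}, and spoil the $t_0\to-\infty$ limit. The pre-shift by $\si(t_0)$ circumvents the drift by aligning interfaces, reducing the hypothesis verification to the intrinsic, $t_0$-independent exponential-decay estimates of $u$ and $v$ ahead of their respective interfaces plus a trivial bound behind.
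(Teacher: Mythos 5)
Your argument is correct but takes a genuinely different route from the paper's. The paper proves Theorem \ref{thm-uniqueness-normalized} by a sliding argument: Lemma \ref{lem-interface-width-two-wave} first bounds $\sup_{t\in\R}|\xi^v_\theta(t)-\xi(t)|$, Lemma \ref{lem-auxiliary-lemma} gives $u(t,x)\le v(t,x-h)$ for $h$ large, and then the infimal admissible shift $h_*$ is shown to vanish by a contradiction argument that combines a single application of Theorem \ref{thm-aprior-estimate} (to deduce \eqref{estimate-infimum-middle-region} at time $0$) with the Harnack inequality in $\Om_m^-$ and the maximum principle in $\Om_l^-$, $\Om_r^-$. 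You instead invoke the full stability theorem (Theorem \ref{thm-stability-tw}) with base time $t_0\to-\infty$, after the crucial pre-shift of $v(t_0,\cdot)$ by $\si(t_0)=\xi^v_\theta(t_0)-\xi(t_0)$ that aligns the $\theta$-levels and makes the hypothesis constant uniform in $t_0$ (using Proposition \ref{prop-transition-wave}(iii) for $u$, and Theorem \ref{thm-monotoncity-exponential-decay}(ii) together with Lemma \ref{lem-property-xi-v}(ii) for $v$). This is a clean instance of the general principle that exponential stability of an entire solution forces uniqueness within the attracted class, and the pre-shift cleverly avoids having to establish in advance that $|\si(t_0)|$ is bounded — a fact the paper must prove separately in Lemma \ref{lem-interface-width-two-wave} and which your result recovers a posteriori as $|\si(t_0)|\equiv|Z_*|$. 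The price is that Theorem \ref{thm-stability-tw} as stated produces $C=C(u_0)$ for a fixed $u_0$; to take $t_0\to-\infty$ you must reopen the proofs of Theorem \ref{thm-aprior-estimate} and Proposition \ref{prop-stability} and verify that the transient time $T$ and the prefactor depend only on $\al_0$ and on the initial gap $\zeta_0^+-\zeta_0^-$ in \eqref{initial-condition-two-sided}, which is in turn bounded uniformly in $t_0$ under your hypothesis verification — a point you correctly flag as the principal obstacle and resolve. Both arguments ultimately rest on Theorem \ref{thm-aprior-estimate}: the paper applies it once and finishes with sliding and Harnack, whereas you route it through the iterated squeezing of Proposition \ref{prop-stability}, trading extra bookkeeping of $t_0$-uniformity for a more modular, conceptually transparent proof.
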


Before proving the above theorem, we first prove two lemmas. The first one concerns the boundedness of interface locations between $v(t,x)$ and $u(t,x)$. For $\la\in(0,1)$, let $\xi_{\la}:\R\ra\R$ be such that $u(t,\xi_{\la}(t))=\la$ for all $t\in\R$. By the space monotonicity of $u(t,x)$, $\xi_{\la}$ is well-defined and unique. In particular, $\xi_{\theta}\equiv\xi$.

\begin{lem}\label{lem-interface-width-two-wave} For any $\la_{1},\la_{2}\in(0,1)$, there holds $\sup_{t\in\R}|\xi_{\la_{1}}^{v}(t)-\xi_{\la_{2}}(t)|<\infty$. In particular,
there holds $\sup_{t\in\R}|\xi_{\theta}^{v}(t)-\xi(t)|<\infty$.
\end{lem}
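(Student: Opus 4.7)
The approach reduces the problem to the single case $\la_1=\la_2=\theta$. Indeed, Proposition \ref{prop-transition-wave}(iii) implies $\sup_{t\in\R}|\xi_{\la_2}(t)-\xi(t)|<\infty$ for every $\la_2\in(0,1)$ (by inverting the explicit decay bound on the profile of $u$), while Lemma \ref{lem-property-xi-v}(ii) together with a triangle inequality yields $\sup_{t\in\R}|\xi^v_{\la_1}(t)-\xi^v_\theta(t)|<\infty$ for every $\la_1\in(0,1)$. Hence it suffices to show that $d(t):=\xi^v_\theta(t)-\xi(t)$ is bounded on $\R$.

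The plan is to forward-propagate a uniform two-sided squeeze via Theorem \ref{thm-aprior-estimate}. Fix $\ep\in(0,\ep_0]$, and for each $t_0\in\R$ take initial shifts $\zeta_0^{\pm}(t_0):=d(t_0)\pm C$ with a single constant $C>0$ to be chosen \emph{independently of $t_0$}. Since $v$ itself solves \eqref{main-eqn}, uniqueness of the Cauchy problem gives $v(t,\cdot)=u(t,\cdot;t_0,v(t_0,\cdot))$ for $t\ge t_0$, so Remark \ref{rem-thm-aprior-estimate}(i) reduces the application of Theorem \ref{thm-aprior-estimate} to verifying \eqref{initial-condition-two-sided} for $u_0=v(t_0,\cdot)$ at each $t_0$. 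This verification splits into three regions: far behind $\xi^v_\theta(t_0)$, where $v(t_0,\cdot)$ and the comparison profiles $u(t_0,\cdot-\zeta_0^{\pm})$ are uniformly close to $1$ by the uniform-in-time profile limit (Lemma \ref{lem-property-xi-v}(i)) and Proposition \ref{prop-transition-wave}(iii); a bounded window around $\xi^v_\theta(t_0)$, where for $C$ large one has $u(t_0,\cdot-\zeta_0^+)\ge 1-\de$ and $u(t_0,\cdot-\zeta_0^-)\le\de$; and far ahead of $\xi^v_\theta(t_0)$, where Theorem \ref{thm-monotoncity-exponential-decay}(ii) gives the decay bound $v(t_0,x)\le\theta e^{-\hat c (x-\hat\xi^v(t_0))}$, dominated by $\ep\Ga(x-\tilde\xi(t_0)-\zeta_0^+)$ once $\al\le\hat c$ and $C$ are sufficiently large, the key gain being a prefactor $e^{-\al C}$.

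With such a uniform $C$, Theorem \ref{thm-aprior-estimate} yields, for all $t\ge t_0$,
\begin{equation*}
u(t,x-\zeta_1^-)-q(t)\Ga(x-\tilde\xi(t)-\zeta_1^-)\le v(t,x)\le u(t,x-\zeta_1^+)+q(t)\Ga(x-\tilde\xi(t)-\zeta_1^+),
\end{equation*}
with $\zeta_1^{\pm}=d(t_0)\pm(C+M\ep/\om)$. Evaluating at $x=\xi^v_\theta(t)$ (where $v(t,x)=\theta$) and invoking Proposition \ref{prop-transition-wave}(iii) to bound $|\xi_{\theta\pm\ep}(t)-\xi(t)|\le L_\ep$ uniformly in $t$ produces $|d(t)-d(t_0)|\le K$ for some constant $K$ \emph{independent} of $t_0$. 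Applying this forward bound with $t_0=0$ for $t\ge 0$, and with $t_0=t$ (then passing to $s=0\ge t$) for $t<0$, yields $|d(t)-d(0)|\le K$ for all $t\in\R$, so $\sup_{t\in\R}|d(t)|<\infty$.

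The main obstacle is the uniform-in-$t_0$ check in the third region: balancing the exponential upper bound on $v$ from Theorem \ref{thm-monotoncity-exponential-decay}(ii) against $\ep\Ga$ requires the decay rate $\al=\al(\al_0)$ of $\Ga$ to satisfy $\al\le\hat c$, which can always be arranged by taking $\al_0$ sufficiently small (since $\al_0$ is a free auxiliary parameter here, not tied to any initial decay of $v(t_0,\cdot)-u(t_0,\cdot)$). Once this is done, the prefactor $e^{-\al C}$ absorbs bounded errors uniformly in $t_0$, completing the construction of the uniform initial squeeze that the rest of the argument propagates forward.
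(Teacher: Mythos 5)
Your argument is correct, and it takes a genuinely different and somewhat cleaner route than the paper. The paper proves $\sup_{t\le 0}|d(t)|<\infty$ (where $d(t):=\xi_\theta^v(t)-\xi(t)$) by contradiction — assuming $|d(t_n)|\to\infty$ along $t_n\to-\infty$, constructing a one-sided squeeze at $t_N$ with a large fixed shift $\zeta_0$, propagating to $t=0$, and contradicting the normalization — and then handles $\sup_{t\ge 0}|d(t)|<\infty$ by a separate direct forward propagation from $t_0=0$. Your approach unifies the two halves: by \emph{centering} the comparison shifts at $d(t_0)$, i.e.\ $\zeta_0^\pm(t_0)=d(t_0)\pm C$, you can verify the two-sided squeeze \eqref{initial-condition-two-sided} for the initial data $v(t_0,\cdot)$ with a constant $C$ uniform in $t_0$, since in each of the three regions the needed estimates only involve $\hat\xi^v-\xi_\theta^v$, $\xi-\tilde\xi$, and the uniform profile limits — all of which are bounded independently of $t_0$ — while the potentially unbounded quantity $d(t_0)$ cancels out of the verification because it is built into the shifts. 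Propagating forward then gives the Lipschitz-like bound $|d(t)-d(t_0)|\le K$ with $K$ uniform in $t_0$, which directly yields global boundedness by picking $t_0=0$ or $t_0=t$. This avoids the contradiction argument and the case split entirely. One small misattribution worth flagging: you say the constraint $\al\le\hat c$ for the far-ahead upper bound requires choosing $\al_0$ small, but this constraint is automatic since $\al\le c_B/8$ while $\hat c=c_B/2$ (indeed one even gets $\al<\hat c/2$ for free, which is what the estimate actually uses after expanding). Where the freedom in $\al_0$ is genuinely needed is to arrange $\al<c_0$ — this is required for the lower-bound check on $u(t_0,\cdot-\zeta_0^-)$ ahead of the interface, where $u$ decays at rate $c_0$ while $\Ga$ decays at rate $\al$, and $\al=\min\{\al_0/2,c_B/8,c_0\}$ could saturate at $c_0$ unless $\al_0$ is taken small; the paper likewise fixes $\al<c_0/2$.
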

\begin{proof} We note that, by Proposition \ref{prop-transition-wave}$\rm(iii)$, Lemma \ref{lem-property-xi-v}$\rm(ii)$, it suffices to prove the ``in particular" part $\sup_{t\in\R}|\xi_{\theta}^{v}(t)-\xi(t)|<\infty$. To do so, we show $\rm(i)$ $\sup_{t\leq0}|\xi_{\theta}^{v}(t)-\xi(t)|<\infty$ and $\rm(ii)$ $\sup_{t\geq0}|\xi_{\theta}^{v}(t)-\xi(t)|<\infty$.

$\rm(i)$ Suppose on the contrary $\sup_{t\leq0}|\xi_{\theta}^{v}(t)-\xi(t)|=\infty$. Then there exists $t_{n}\ra-\infty$ as $n\ra\infty$ such that $|\xi_{\theta}^{v}(t_{n})-\xi(t_{n})|\ra\infty$ as $n\ra\infty$. Suppose first $\xi_{\theta}^{v}(t_{n})-\xi(t_{n})\ra\infty$ as $n\ra\infty$. Let $\al\in(0,\frac{c_{0}}{2})$ be small and fixed, where $c_{0}$ is given in Proposition \ref{prop-transition-wave}$\rm(iv)$. Recall that $u(t,x)\leq e^{-c_{0}(x-\xi(t))}$ for $x\geq\xi(t)$. Then, for any $\ep>0$ small and any $\zeta_{0}>0$, we can find some large $N$ such that
\begin{equation*}
u(t_{N},x-\zeta_{0})-\ep\Ga_{\al}(x-\tilde{\xi}(t_{N})-\zeta_{0})\leq v(t_{N},x),\quad x\in\R,
\end{equation*}
where $\Ga_{\al}$ is given in \eqref{function-Ga} and $\tilde{\xi}:\R\ra\R$ is given in Proposition \ref{prop-interface-rightward-propagation-improved}. Applying Theorem \ref{thm-aprior-estimate}, we find
\begin{equation*}
u(t,x-\zeta_{1})-\ep\leq u(t,x-\zeta_{1})-\ep e^{-\om(t-t_{N})}\Ga_{\al}(x-\tilde{\xi}(t)-\zeta_{1})\leq v(t,x),\quad x\in\R,\,\,t\geq t_{N},
\end{equation*}
where $\zeta_{1}=\zeta_{0}-\frac{M\ep}{\om}$ is close to $\zeta_{0}$ if $\ep$ is sufficiently small. Setting $t=0$ and $x=\zeta_{1}+\xi(0)$ in the above estimate, we find $\theta-\ep\leq v(0,\zeta_{1}+\xi(0))$, which leads to $\xi_{\theta-\ep}^{v}(0)\geq\zeta_{1}+\xi(0)$. Since $\xi_{\theta-\ep}^{v}(0)-\xi_{\theta}^{v}(0)\leq L_{\theta-\ep,\theta}$, we have $\xi_{\theta}^{v}(0)-\xi(0)\geq\zeta_{1}-L_{\theta-\ep,\theta}>0$ if we choose $\zeta_{0}>0$ sufficiently large once $\ep>0$ is fixed. It's a contradiction to the normalization $\xi_{\theta}^{v}(0)=0=\xi(0)$.

Now assume $\xi_{\theta}^{v}(t_{n})-\xi(t_{n})\ra-\infty$ as $n\ra\infty$.
By Theorem \ref{thm-monotoncity-exponential-decay}$\rm(ii)$ and its proof, we have $\hat{\xi}^{v}(t)=\tilde{\xi}^{v}(t)+L_{\theta}$ and $v(t,x)\leq\theta e^{-\frac{c_{B}}{2}(x-\hat{\xi}^{v}(t))}$ for $x\geq\hat{\xi}^{v}(t)$. Also, recall $\sup_{t\in\R}|\xi_{\theta}^{v}(t)-\tilde{\xi}^{v}(t)|<\infty$. Let $\al\in(0,\frac{c_{B}}{2})$ be small and fixed. Then, for any $\ep>0$ small and any $\zeta_{0}<0$, we can find some large $N$ such that
\begin{equation*}
v(t_{N},x)\leq u(t_{N},x-\zeta_{0})+\ep\Ga_{\al}(x-\tilde{\xi}(t_{N})-\zeta_{0}),\quad x\in\R.
\end{equation*}
Applying Theorem \ref{thm-aprior-estimate}, we find
\begin{equation*}
v(t,x)\leq u(t,x-\zeta_{1})+\ep e^{-\om(t-t_{N})}\Ga_{\al}(x-\tilde{\xi}(t)-\zeta_{1})\leq u(t,x-\zeta_{1})+\ep,\quad x\in\R,\,\,t\geq t_{N},
\end{equation*}
where $\zeta_{1}=\zeta_{0}+\frac{M\ep}{\om}$. Setting $t=0$ and $x=\zeta_{1}+\xi(0)$, we obtain $v(0,\zeta_{1}+\xi(0))\leq\theta+\ep$, which leads to $\xi_{\theta+\ep}^{v}(0)\leq\zeta_{1}+\xi(0)$. Since $\xi_{\theta}^{v}(0)-\xi_{\theta+\ep}^{v}(0)\leq L_{\theta,\theta+\ep}$, we arrive at $\xi_{\theta}^{v}(0)-\xi(0)\leq\zeta_{1}+L_{\theta,\theta+\ep}<0$ if we choose $-\zeta_{0}$ is sufficiently large once $\ep>0$ is fixed. It's a contradiction. Hence, $\sup_{t\leq0}|\xi_{\theta}^{v}(t)-\xi(t)|<\infty$.

$\rm(ii)$ Fix a small $\al\in(0,\min\{\frac{c_{B}}{4},\frac{c_{0}}{2}\})$. On one hand, due to the normalization $\xi_{\theta}^{v}(0)=\xi(0)$ and the fact that $v(0,x)\leq\theta e^{-\frac{c_{B}}{2}(x-\hat{\xi}^{v}(0))}$ for $x\geq\hat{\xi}^{v}(0)$, for any small $\ep>0$, there holds
\begin{equation*}
v(0,x)\leq u(0,x-\zeta_{0})+\ep\Ga_{\al}(x-\tilde{\xi}(0)-\zeta_{0}),\quad x\in\R
\end{equation*}
for large $\zeta_{0}>0$.  Theorem \ref{thm-aprior-estimate} then yields
\begin{equation*}
v(t,x)\leq u(t,x-\zeta_{1})+\ep e^{-\om t}\Ga_{\al}(x-\tilde{\xi}(t)-\zeta_{1})\leq u(t,x-\zeta_{1})+\ep,\quad x\in\R,\,\,t\geq0,
\end{equation*}
where $\zeta_{1}=\zeta_{0}+\frac{M\ep}{\om}$. Setting $x=\zeta_{1}+\xi(t)$, we find $v(t,\zeta_{1}+\xi(t))\leq\theta+\ep$, which implies $\xi_{\theta+\ep}^{v}(t)\leq\zeta_{1}+\xi(t)$ for all $t\geq0$. It then follows from $\xi_{\theta}^{v}(t)-\xi_{\theta+\ep}^{v}(t)\leq L_{\theta,\theta+\ep}$ for all $t\in\R$ that $\xi_{\theta}^{v}(t)-\xi(t)\leq\zeta_{1}+L_{\theta,\theta+\ep}$ for all $t\geq0$.

On the other hand, for any small $\ep>0$, there holds
\begin{equation*}
u(0,x+\zeta_{0})-\ep\Ga_{\al}(x-\tilde{\xi}(0)+\zeta_{0})\leq v(0,x),\quad x\in\R
\end{equation*}
for large $\zeta_{0}>0$. Again, by Theorem \ref{thm-aprior-estimate}, we have
\begin{equation*}
u(t,x+\zeta_{1})-\ep\leq u(t,x+\zeta_{1})-\ep e^{-\om t}\Ga_{\al}(x-\tilde{\xi}(t)+\zeta_{1})\leq v(t,x),\quad x\in\R,\,\,t\geq0,
\end{equation*}
where $\zeta_{1}=\zeta_{0}+\frac{M\ep}{\om}$. Setting $x=-\zeta_{1}+\xi(t)$, we have $\theta-\ep\leq v(t,-\zeta_{1}+\xi(t))$, which leads to $\xi_{\theta-\ep}^{v}(t)\geq-\zeta_{1}+\xi(t)$ for all $t\geq0$. Since $\xi_{\theta-\ep}^{v}(t)-\xi_{\theta}^{v}(t)\leq L_{\theta-\ep,\theta}$ for all $t\in\R$, we find $\xi_{\theta}^{v}(t)-\xi(t)\geq-\zeta_{1}-L_{\theta-\ep,\theta}$ for all $t\geq0$. This establishes $\sup_{t\geq0}|\xi_{\theta}^{v}(t)-\xi(t)|<\infty$, and hence, completes the proof.
\end{proof}

The second lemma giving a comparison of $u(t,x)$ and $v(t,x)$ is similar to Lemma \ref{lem-space-monotonicity}.

\begin{lem}\label{lem-auxiliary-lemma}
There exists $h_{0}\geq0$ such that for any $h\geq h_{0}$ there holds $u(t,x)\leq v(t,x-h)$ for $x\in\R$ and $t\in\R$.
\end{lem}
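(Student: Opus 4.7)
The plan is to mirror the strategy of Lemma \ref{lem-space-monotonicity}, but compare $u(t,x)$ against the shifted wave $v(t,x-h)$ instead of $v$ against itself. The key input is Lemma \ref{lem-interface-width-two-wave}: since $\xi_\theta^v(t)-\xi(t)$ is uniformly bounded, for $h$ large the graph of $v(t,\cdot-h)$ is pushed far to the right of the graph of $u(t,\cdot)$, so that wherever $u(t,x)$ is not already small, $v(t,x-h)$ is already essentially $1$. This allows us to carve $\R\times\R$ into a bounded-width ``transition strip'' and the two tails, and handle each piece separately.

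First, I would define, exactly as in Lemma \ref{lem-space-monotonicity} but with $u$ in place of $v$, the level sets
\begin{equation*}
\Om_t=\bigl\{x\in\R : \tfrac{\theta}{2}\le u(t,x)\le\tfrac{1+\theta_*}{2}\bigr\},
\end{equation*}
and the regions $\Om_t^1,\Om_t^2,\Om_t^3,\Om_t^4$, setting $\Om_i=\cup_t(\{t\}\times\Om_t^i)$. By Proposition \ref{prop-transition-wave}(iii) and uniform steepness, the set $\Om_t$ has uniformly bounded length, and $\inf\Om_t$, $\sup\Om_t$ both stay a bounded distance from $\xi(t)$. Combining this with Lemma \ref{lem-interface-width-two-wave} and Lemma \ref{lem-property-xi-v}(ii), I fix $h_0>0$ so large that, for every $h\ge h_0$ and every $t\in\R$:
\begin{itemize}
\item[(a)] if $x\in\Om_t^1$, then $v(t,x-h)\ge\tfrac{1+\theta_*}{2}$;
\item[(b)] if $x\le\inf\Om_t$ (i.e.\ $x\in\Om_t^2$), then $v(t,x-h)\ge\theta_*$;
\item[(c)] if $x\ge\sup\Om_t$ (i.e.\ $x\in\Om_t^3$), then still $v(t,x-h)\ge0$.
\end{itemize}
Item (a) already gives $u(t,x)\le(1+\theta_*)/2\le v(t,x-h)$ on $\Om_1$, and the same bound via monotonicity of $v$ handles the inner high set $\Om_4$.

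For $\Om_2$ and $\Om_3$, I would set $\phi(t,x):=v(t,x-h)-u(t,x)$ and argue by contradiction exactly as in Lemma \ref{lem-space-monotonicity}. On $\Om_2$, both $u(t,x)$ and $v(t,x-h)$ are above $\theta_*$ by (b), so the linearization
\begin{equation*}
\phi_t=\phi_{xx}+a(t,x)\phi,\qquad a(t,x)=\tfrac{f(t,v(t,x-h))-f(t,u(t,x))}{v(t,x-h)-u(t,x)},
\end{equation*}
has $a\le-\beta$ by (H3); since $\phi\ge0$ on $\partial\Om_2\subset\Om_1$, assuming $\inf_{\Om_2}\phi<0$ and picking a minimizing sequence $(t_n,x_n)$ one produces (after translation and passing to a local limit via parabolic regularity) a function attaining a strictly negative interior minimum while satisfying $\tilde\phi_t-\tilde\phi_{xx}\ge-\beta\inf\phi>0$, contradicting the strong maximum principle. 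On $\Om_3$, we have $u\le\theta/2$ so $f(t,u)\equiv0$ there, and $\phi$ satisfies $\phi_t-\phi_{xx}=f(t,v(t,x-h))\ge0$ with $\phi\ge0$ on $\partial\Om_3$; the same compactness-plus-maximum-principle argument used in Lemma \ref{lem-space-monotonicity} for its $\Om_3$ case rules out a negative infimum.

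The main obstacle I anticipate is the minimizing-sequence/compactness step on the unbounded tail $\Om_2$: one must show the minimizing points $(t_n,x_n)$ stay a uniform positive distance from $\partial\Om_2$ (so the interior limit argument applies) yet also remain in a bounded-modulo-translation configuration so that the translated sequences $u(\cdot+t_n,\cdot+x_n)$ and $v(\cdot+t_n,\cdot+x_n-h)$ admit a convergent subsequence — both of these points are handled exactly as in the proof of Lemma \ref{lem-space-monotonicity} using the global bounds $\sup|u_x|,\sup|u_t|,\sup|v_x|,\sup|v_t|<\infty$ and the uniform-in-$t$ limits defining a generalized traveling wave. Once (a)--(c) are arranged via Lemma \ref{lem-interface-width-two-wave}, the rest of the argument is a direct transcription of Lemma \ref{lem-space-monotonicity}, and we conclude $u(t,x)\le v(t,x-h)$ on all of $\R^2$ for every $h\ge h_0$.
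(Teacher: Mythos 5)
Your proposal follows essentially the same route as the paper: reduce to a single $h_0$ via the monotonicity of $v$, use Lemma \ref{lem-interface-width-two-wave} to push $v(\cdot,\cdot-h)$ far enough right that on the transition strip of $u$ the comparison is automatic, and then run the minimizing-sequence/Harnack/maximum-principle argument of Lemma \ref{lem-space-monotonicity} separately on the left tail (where $\rm(H3)$ gives $a\le -\beta$) and the right tail (where $f(t,u)\equiv 0$). The only cosmetic difference is that you carry over the four-piece decomposition $\Om_1,\dots,\Om_4$ from Lemma \ref{lem-space-monotonicity}, whereas the paper exploits the known monotonicity of $u$ to use level lines $\xi_{(1+\theta_*)/2},\xi_{\theta/2}$ directly and collapse to three regions (your $\Om_4$ is degenerate here); this changes nothing in substance.
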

\begin{proof}
Note that due to the space monotonicity of $v(t,x)$ by Theorem \ref{thm-monotoncity-exponential-decay}$\rm(i)$, we only need to show for some $h_{0}\geq0$ there holds $u(t,x)\leq v(t,x-h_{0})$ for $x\in\R$ and $t\in\R$. The proof is similar to, and even simpler than that of Lemma \ref{lem-space-monotonicity}, since we now have space monotonicity. Let us sketch the proof.

For $t\in\R$, let
\begin{equation}\label{three-sets}
\Om_{t}^{l}=\Big(-\infty,\xi_{\frac{1+\theta_{*}}{2}}(t)\Big],\quad\Om_{t}^{m}=\Big[\xi_{\frac{1+\theta_{*}}{2}}(t),\xi_{\frac{\theta}{2}}(t)\Big]\quad\text{and}\quad\Om_{t}^{r}=\Big[\xi_{\frac{\theta}{2}}(t),\infty\Big).
\end{equation}
For $i=l,m,r$, set $\Om_{i}=\cup_{t\in\R}(\{t\}\times\Om_{t}^{i})$.

We first claim that there is some $h_{0}>0$ such that $u(t,x)\leq v(t,x-h_{0})$ for $(t,x)\in\Om_{m}$. In fact, due to Lemma \ref{lem-interface-width-two-wave}, we have
\begin{equation*}
h_{0}:=\sup_{t\in\R}\Big|\xi^{v}_{\frac{1+\theta_{*}}{2}}(t)-\xi_{\frac{\theta}{2}}(t)\Big|<\infty.
\end{equation*}
If $x\leq\xi_{\frac{\theta}{2}}(t)$, then $x-h_{0}\leq\xi^{v}_{\frac{1+\theta_{*}}{2}}(t)$, and hence $v(t,x-h_{0})\geq\frac{1+\theta_{*}}{2}$ by monotonicity. Since $u(t,x)\leq\frac{1+\theta_{*}}{2}$ for $(t,x)\in\Om_{m}$, the claim follows.

For $\Om_{l}$, we note that if $(t,x)\in\Om_{l}$, then we have both $u(t,x)\geq\frac{1+\theta_{*}}{2}$ and $v(t,x-h_{0})\geq\frac{1+\theta_{*}}{2}$, and hence $\phi(t,x)=v(t,x-h_{0})-u(t,x)$ satisfies $\phi_{t}=\phi_{xx}+a(t,x)\phi$ with $a(t,x)\leq-\beta$ by $\rm(H3)$. We then proceed as in the proof of Lemma \ref{lem-space-monotonicity} for $\Om_{2}$ to conclude that
\begin{equation*}
\inf_{(t,x)\in\Om_{l}}(v(t,x-h_{0})-u(t,x))\geq0.
\end{equation*}

For $\Om_{r}$, we have $u(t,x)\leq\frac{\theta}{2}$ hence, $f(t,u(t,x))=0$ for $(t,x)\in\Om_{r}$. Thus, $\phi(t,x)=v(t,x-h_{0})-u(t,x)$ satisfies $\phi_{t}-\phi_{xx}=f(t,v(t,x-h_{0}))\geq0$. We then proceed as in the proof of Lemma \ref{lem-space-monotonicity} for $\Om_{3}$ to conclude that
\begin{equation*}
\inf_{(t,x)\in\Om_{r}}(v(t,x-h_{0})-u(t,x))\geq0.
\end{equation*}

In conclusion, $u(t,x)\leq v(t,x-h_{0})$ for $x\in\R$ and $t\in\R$, and the lemma follows.
\end{proof}

Now, we prove Theorem \ref{thm-uniqueness-normalized}.

\begin{proof}[Proof of Theorem \ref{thm-uniqueness-normalized}]
We modify the proof of Theorem \ref{thm-monotoncity-exponential-decay}$\rm(i)$. By Lemma \ref{lem-auxiliary-lemma}, there holds
\begin{equation*}
h_{*}:=\inf\big\{h_{0}\geq0\big|u(t,x)\leq v(t,x-h_{0}),\,\,x\in\R,\,\,t\in\R\big\}<\infty.
\end{equation*}
We claim that $h_{*}=0$. For contradiction, suppose $h_{*}>0$. Since $u(t,x)\leq v(t,x-h_{*})$ for all $x\in\R$ and $t\in\R$, and $u(0,0)=v(0,0)<v(0,-h_{*})$ by the normalization and monotonicity, we have $u(t,x)<v(t,x-h_{*})$ for all $x\in\R$ and $t\in\R$.

For $i=l,m,r$, let $\Om_{t}^{i}$ be as in \eqref{three-sets} and set $\Om_{i}^{-}=\cup_{t\leq0}(\{t\}\times\Om_{t}^{i})$. Setting
\begin{equation*}
\phi(t,x)=v(t,x-h_{*})-u(t,x),\quad x\in\R,\,\,t\in\R,
\end{equation*}
we show
\begin{equation}\label{estimate-infimum-middle-region}
\inf_{(t,x)\in\Om_{m}^{-}}\phi(t,x)>0.
\end{equation}
In fact, if \eqref{estimate-infimum-middle-region} fails, then there's $\{(t_{n},x_{n})\}_{n\in\N}\subset\Om_{m}^{-}$ with $t_{n}\ra-\infty$ as $n\ra\infty$ such that $\phi(t_{n},x_{n})\ra0$ as $n\ra\infty$. Since $\phi\geq0$, we conclude from Harnack inequality that $\phi_{n}(t,x):=\phi(t+t_{n},x+x_{n})$ converges locally uniformly to $0$ as $n\ra\infty$. This, in particular, implies that for any $\ep>0$, there exists $M=M(\ep)>0$ and $N=N(\ep)>0$ such that
\begin{equation}\label{an-estimate-auxiliary-222}
\sup_{|x|\geq M}\phi_{n}(0,x)\leq\frac{\ep}{100}\,\,\text{for all}\,\, n\in\N\quad\text{and}\quad\sup_{|x|\leq M}\phi_{n}(0,x)\leq\frac{\ep}{100}\,\,\text{for all}\,\, n\geq N.
\end{equation}
The first one holds for all $n\in\N$ is due to the uniform-in-time limits at $\pm\infty$ of generalized traveling waves and the fact $(t_{n},x_{n})\in\Om_{m}^{-}$ for all $n\in\N$. The second one is due to the locally uniform limit as above.

Moreover, since $(t_{n},x_{n})\in\Om_{m}^{-}$ for all $n\in\N$, there holds $\sup_{n\in\N}|\xi(t_{n})-x_{n}|<\infty$. Also, we recall $v(t_{n},x)$ is exponential decay ahead of $\hat{\xi}^{v}(t_{n})$ by Theorem \ref{thm-monotoncity-exponential-decay}$\rm(ii)$ and $\sup_{n\in\N}|\hat{\xi}^{v}(t_{n})-\xi(t_{n})|<\infty$ by Lemma \ref{lem-property-xi-v}$\rm(ii)$, Lemma \ref{lem-interface-propagation-improved-general} and Lemma \ref{lem-interface-width-two-wave}. Now, using \eqref{an-estimate-auxiliary-222}, we have
\begin{equation*}
\phi_{n}(0,x-x_{n})=v(t_{n},x-h_{*})-u(t_{n},x)\leq\frac{\ep}{100}\quad\text{for}\,\, x\leq x_{n}
\end{equation*}
provided $n\geq N$. For $x\geq x_{n}$, we have from \eqref{an-estimate-auxiliary-222}
\begin{equation*}
\phi_{n}(0,x-x_{n})=v(t_{n},x-h_{*})-u(t_{n},x)\leq\min\Big\{\frac{\ep}{100},v(t_{n},x-h_{*})\Big\}
\end{equation*}
provided $n\geq N$. We then conclude from the exponential decay of $v(t_{n},x-h_{*})$ and the uniform bounds:
\begin{equation*}
\sup_{n\in\N}|\xi(t_{n})-x_{n}|<\infty,\quad\sup_{n\in\N}|\hat{\xi}^{v}(t_{n})-\xi(t_{n})|<\infty\quad\text{and}\quad\sup_{n\in\N}|\xi(t_{n})-\tilde{\xi}(t_{n})|<\infty,
\end{equation*}
that for any $\ep>0$ there holds
\begin{equation*}
\phi_{n}(0,x)\leq\ep\Ga_{\al}(x-\tilde{\xi}(t_{n})),\quad x\in\R
\end{equation*}
if $n$ is sufficiently large, where $\al\in(0,\frac{c_{B}}{4})$ is small. The above estimate is the same as
\begin{equation*}
v(t_{n},x-h_{*})\leq u(t_{n},x)+\ep\Ga_{\al}(x-\tilde{\xi}(t_{n})),\quad x\in\R.
\end{equation*}
Theorem \ref{thm-aprior-estimate} then implies that
\begin{equation*}
v(t,x-h_{*})\leq u(t,x-\zeta_{1})+\ep e^{-\om(t-t_{n})}\Ga_{\al}(x-\tilde{\xi}(t)-\zeta_{1})\leq u(t,x-\zeta_{1})+\ep,\quad x\in\R,\,\,t\geq t_{n},
\end{equation*}
where $\zeta_{1}=\frac{M\ep}{\om}$. Setting $x=\xi(t)+\zeta_{1}$, we find
$v(t,\xi(t)+\zeta_{1}-h_{0})\leq\theta+\ep$, which yields $\xi_{\theta+\ep}^{v}(t)\leq\xi(t)+\zeta_{1}-h_{*}$, and then, $\xi^{v}_{\theta}(t)\leq\xi(t)+\zeta_{1}-h_{*}+\xi^{v}_{\theta}(t)-\xi_{\theta+\ep}^{v}(t)$. Now, setting $t=0$ and choosing $\ep>0$ small so that $\zeta_{1}+\xi^{v}_{\theta}(0)-\xi_{\theta+\ep}^{v}(0)<\frac{h_{*}}{2}$, we deduce from the normalization that $0\leq-\frac{h_{*}}{2}$. It's a contradiction. Hence, \eqref{estimate-infimum-middle-region} holds.

Minicing the arguments for \eqref{shifted-infimum}, we conclude from \eqref{estimate-infimum-middle-region}, $\sup_{x\in\R,t\in\R}|v_{x}(t,x)|<\infty$ and $\sup_{x\in\R,t\in\R}|u_{x}(t,x)|<\infty$ that there is small $\de_{*}\in(0,h_{*})$ such that
\begin{equation*}
\inf_{(t,x)\in\Om_{m}^{-}}(v(t,x-(h_{*}-\de_{*}))-u(t,x))\geq0.
\end{equation*}
We now argue as in the proof of Lemma \ref{lem-space-monotonicity} for $\Om_{2}$ and $\Om_{3}$ (here, we need to consider the restriction $t\leq0$) to find
\begin{equation*}
\inf_{(t,x)\in\Om_{i}^{-}}(v(t,x-(h_{*}-\de_{*}))-u(t,x))\geq0,\quad i=l,r.
\end{equation*}
Thus, $\inf_{x\in\R,t\leq0}(v(t,x-(h_{*}-\de_{*}))-u(t,x))\geq0$. Maximum principle then implies that $\inf_{(t,x)\in\R^{2}}(v(t,x-(h_{*}-\de_{*}))-u(t,x))\geq0$, which contradicts the minimality of $h_{*}$. Hence, $h_{*}=0$. It then follows that $v(t,x)\geq u(t,x)$ for all $x\in\R$ and $t\in\R$. Since $v(0,0)=u(0,0)$ by normalization, maximum principle ensures $v\equiv u$. This completes the proof.
\end{proof}

\section{Recurrence of Generalized Traveling Waves}\label{sec-recurrence}

In this section, we study the recurrence of the wave profile $\psi^f(t,x)=u^{f}(t,x+\xi^{f}(t))$ and the wave speed $\dot{\xi}^{f}(t)$ of the unique generalized traveling wave $u^f(t,x)$ of \eqref{main-eqn}, and the almost periodicity of $\psi^f(t,x)$ and $\dot{\xi}^{f}(t)$ in $t$ when $f(t,u)$ is almost periodic in $t$.

We are going to prove Theorem \ref{thm-recurrence}. Before this, let us first recall the definition of almost periodic functions and some basic properties.

\begin{defn}\label{almost-periodic-def}
\begin{itemize}
\item[\rm(i)] A continuous function $g:\R\to \R$ is called {\rm almost periodic} if for any sequence $\{\alpha_n^{'}\}_{n\in\N}\subset\R$, there
is a subsequence $\{\alpha_n\}_{n\in\N}\subset\{\alpha_n^{'}\}_{n\in\N}$ such that $\lim_{n\to\infty}g(t+\alpha_n)$ exists uniformly in $t\in\R$.

\item[\rm(ii)] Let $g(t,u)$ be a continuous function of $(t,u)\in\R\times\R$. $g$ is said to be {\rm almost periodic in $t$ uniformly with respect to $u$ in bounded sets} if
$g$ is uniformly continuous in $t\in\R$  and  $u$ in bounded sets,  and for each $u\in \R$, $g(t,u)$ is almost periodic in $t$.

\item[\rm (iii)]  Let $g\in C(\R\times \R,\R)$ be  almost periodic
 in $t$ uniformly with respect to $u$, and
$$
g(t,u)\sim \sum_{\lambda \in\R}a_\lambda (u)e^{i\lambda t}
$$ be the Fourier series of $g$ (see \cite{Vee}, \cite{Yi} for
the definition). Then ${\mathcal {S}}(g)=\{\lambda|a_\lambda(u)\not
\equiv 0\}$ is called the {\rm Fourier spectrum} of $g$, and
\begin{equation*}
{\mathcal {M}}(g)=\text{the smallest additive subgroup of }\,\,\R
\,\,\text{containing}\,\,{\mathcal {S}}(g)
\end{equation*}
is called the {\rm frequency module}
of $g$.
\end{itemize}
\end{defn}

\begin{rem}
\label{almost-periodic-rk}
\begin{itemize}
\item[\rm(i)] Suppose that $g:\R\to \R$ is continuous and almost periodic. Then $\lim_{T\to\infty}\frac{1}{T}\int_0^T g(t)dt$ exists.

\item[\rm(ii)] Let $g(t,u)$ be a continuous function of $(t,u)\in\R\times\R$. $g$ is almost periodic in $t$ uniformly with respect to
 $u$ in bounded sets  if and only if
 $g$ is uniformly continuous in $t\in\R$  and $u$ in bounded sets, and for any sequences $\{\alpha_n^{'}\}_{n\in\N}$, $\{\beta_n^{'}\}_{n\in\N}\subset \R$, there are subsequences $\{\alpha_n\}_{n\in\N}\subset\{\alpha_n^{'}\}_{n\in\N}$, $\{\beta_n\}_{n\in\N}\subset\{\beta_n^{'}\}_{n\in\N}$
such that
$$
\lim_{n\to\infty}\lim_{m\to\infty}g(t+\alpha_n+\beta_m,u)=\lim_{n\to\infty}g(t+\alpha_n+\beta_n,u)
$$
for each $(t,u)\in\R\times\R$ (see \cite[Theorems 1.17 and 2.10]{Fink}).

\item[\rm (iii)]  Assume that $g_1,g_2\in C(\R\times
\R,\R)$ are two uniformly almost periodic functions in its
first independent variable. If for any sequence $\{t_n\}_{n\in\N}\subset \R$
with $g_1\cdot t_n\to g_1$ as $n\to\infty$, there holds $g_2\cdot t_n\to g_2$ as $n\to\infty$, then $\mathcal{M}(g_2)\subset \mathcal{M}(g_1)$ (it follows from the results in \cite{Yi} (see also \cite{Fink}).

\end{itemize}
\end{rem}

In the rest of this section, we assume that $\rm(H1)$-$\rm(H4)$ hold. Then, any $g\in H(f)$ satisfies $\rm(H1)$-$\rm(H3)$. Let $u^g(t,x)$ be the unique generalized traveling wave of \eqref{main-eqn-1} with the continuously differentiable interface location function $\xi^{g}(t)$ satisfying $u^g(t,\xi^g(t))=\theta$ and the normalization $\xi^g(0)=0$. Let $\psi^{g}(t,x)=u^{g}(t,x+\xi^{g}(t))$ be the profile function. For any $u_0\in C_{\rm unif}^b(\R,\R)$, let $u(t,x;u_0,g)$ be the solution of \eqref{main-eqn-1} with $u(0,x;u_0,g)=u_0(x)$.

Next, we prove a lemma.

\begin{lem}
\label{recurrence-lem}
\begin{itemize}
\item[\rm(i)]   For any $g\in H(f)$,
 \begin{equation}
 \label{recurrence-eq1}
 \psi^g(\tau,x)=\psi^{g\cdot \tau}(0,x),\quad \forall\,\, \tau,x\in\R.
 \end{equation}

\item[\rm(ii)] $\lim_{x\to -\infty}\psi^g(t,x)=1$ and $\lim_{x\to\infty}\psi^g(t,x)=0$ uniformly in $t\in\R$ and $g\in H(f)$.

\item[\rm(iii)] $\sup_{g\in H(f),t\in\R}|\dot\xi^g(t)|<\infty$.
\end{itemize}
\end{lem}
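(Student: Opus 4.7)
The plan is to prove the three parts of the lemma in turn, using the uniqueness theorem from Section~\ref{sec-uniqueness} together with the fact that every element of the hull $H(f)$ enjoys the same quantitative data as $f$.

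For part~$\mathrm{(i)}$, I would fix $g\in H(f)$ and $\tau\in\R$ and set $\tilde u(t,x)=u^g(t+\tau,x)$. A direct substitution shows $\tilde u$ solves \eqref{main-eqn-1} with $g$ replaced by $g\cdot\tau$ and inherits the defining property of a generalized traveling wave, with interface location function $t\mapsto\xi^g(t+\tau)$. Since $g\cdot\tau\in H(f)$ satisfies $\mathrm{(H1)}$-$\mathrm{(H3)}$, Theorem~\ref{thm-uniqueness-introduction} gives a shift $c=c(g,\tau)\in\R$ with $\tilde u(t,x)=u^{g\cdot\tau}(t,x-c)$; comparing interface locations at $t=0$ and using the normalization $\xi^{g\cdot\tau}(0)=0$ forces $c=\xi^g(\tau)$. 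Setting $t=0$ and translating by $\xi^g(\tau)$ gives $\psi^g(\tau,x)=u^g(\tau,x+\xi^g(\tau))=u^{g\cdot\tau}(0,x)=\psi^{g\cdot\tau}(0,x)$.

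For part~$\mathrm{(ii)}$, thanks to \eqref{recurrence-eq1} it is enough to prove the limits for $\psi^h(0,\cdot)$ uniformly in $h\in H(f)$. Because every $h\in H(f)$ satisfies $\mathrm{(H1)}$-$\mathrm{(H3)}$ with the same quantitative data (the same $\theta$, $f_{\inf}$, $f_{\sup}$, $\theta_*$, $\beta$), the construction underlying Proposition~\ref{prop-transition-wave} produces the decaying envelope $v$ of item~$\mathrm{(iii)}$ with constants $c_0,c_1,c_2,c_3$ that depend only on this shared data. Consequently $v(x)\leq\psi^h(0,x)$ for $x\leq 0$ and $\psi^h(0,x)\leq v(x)$ for $x\geq 0$ hold with the same $v$ for every $h\in H(f)$, and the limits $v(-\infty)=1$, $v(+\infty)=0$ deliver the uniform conclusion.

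For part~$\mathrm{(iii)}$, differentiating $u^g(t,\xi^g(t))=\theta$ in $t$, substituting the PDE, and using the ignition condition $g(t,\theta)=0$ yields
\begin{equation*}
\dot\xi^g(t)=-\frac{\psi^g_{xx}(t,0)}{\psi^g_x(t,0)}.
\end{equation*}
By part~$\mathrm{(i)}$, $\psi^g_x(t,0)=\psi^{g\cdot t}_x(0,0)$ and $\psi^g_{xx}(t,0)=\psi^{g\cdot t}_{xx}(0,0)$, so it suffices to bound these two quantities uniformly in $h\in H(f)$. The denominator is controlled from below by the uniform steepness estimate of Proposition~\ref{prop-transition-wave}$\mathrm{(ii)}$ with $M=0$, whose constant depends only on the $\mathrm{(H1)}$-$\mathrm{(H3)}$ data. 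The numerator I would bound via interior parabolic Schauder estimates applied directly to $u^h_t=u^h_{xx}+h(t,u^h)$: by $\mathrm{(H4)}$ the source is globally uniformly H\"older in $t$ and locally Lipschitz in $u$, and $0\leq u^h\leq 1$, so one obtains $C^{2,\alpha}$ bounds on $u^h$ uniform in $h\in H(f)$, giving a uniform pointwise bound on $u^h_{xx}$ and hence on $\psi^h_{xx}$. The main obstacle I anticipate is precisely this uniform-in-$h$ bookkeeping: one must avoid any circular use of $\dot\xi^h$ (which is what is being estimated). Working with $u^h$ rather than $\psi^h$ in the Schauder step sidesteps the issue since the equation for $u^h$ contains no drift term.
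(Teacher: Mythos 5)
Your proof of part~(i) is correct and is essentially the paper's own argument: both you and the paper observe that $u^g(\cdot+\tau,\cdot)$ is a generalized traveling wave of \eqref{main-eqn-1} with $g$ replaced by $g\cdot\tau$, and invoke Theorem~\ref{thm-uniqueness-introduction} together with the normalization $\xi^{g\cdot\tau}(0)=0$ to pin down the space shift.

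Your treatment of parts~(ii) and (iii) takes a genuinely different route, and it contains a gap. You apply Proposition~\ref{prop-transition-wave} (items~(ii) and (iii) there) to each $h\in H(f)$ separately and assert that the resulting constants ($c_0,c_1,c_2,c_3$ for the envelope $v$, and $C(0)$ for uniform steepness) depend only on the shared (H1)--(H3) data and hence are uniform over the hull. That claim is plausible, but it is a statement about how the construction in \cite{ShSh14} depends on its hypotheses; it is neither stated in Proposition~\ref{prop-transition-wave} nor established anywhere in this paper, so as written it is an unverified assertion. The paper avoids this issue by never applying Proposition~\ref{prop-transition-wave} to any $h\neq f$: for the translates $f\cdot\tau$ it uses the identity $\psi^{f\cdot\tau}(t,x)=\psi^f(t+\tau,x)$ from part~(i), which makes the uniform limits in~(ii) and the uniform speed bound $\sup_{t,\tau}|\dot\xi^{f\cdot\tau}(t)|<\infty$ immediate consequences of Proposition~\ref{prop-transition-wave} applied only to $f$ (plus parabolic a priori estimates); for a general $g\in H(f)$, it writes $g=\lim f\cdot t_n$, passes the uniform bounds through the limit, checks that the limiting object is a generalized traveling wave (here is where the uniform-in-$n$ bound on $\dot\xi^{f\cdot t_n}$ is used), and identifies it with $\psi^g$ via Theorem~\ref{thm-uniqueness-introduction}. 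To repair your proof you should either justify the uniformity of the constants by pointing to the specific construction in \cite{ShSh14}, or replace the direct application of Proposition~\ref{prop-transition-wave} to arbitrary $h$ with this limiting argument. Minor points that are fine: your reduction of~(ii) to uniformity of $\psi^h(0,\cdot)$ over $h\in H(f)$ via part~(i), your simplified identity $\dot\xi^g(t)=-\psi^g_{xx}(t,0)/\psi^g_x(t,0)$ using $g(t,\theta)=0$, and your use of Schauder estimates on $u^h$ rather than on $\psi^h$ to avoid circularity in the drift term are all correct.
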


\begin{proof}
$\rm(i)$ Observe that for any given $\tau\in\R$, $u_1(t,x)=\psi^{g\cdot \tau}(t,x-\xi^{g\cdot\tau}(t))$ is a generalized traveling wave of
 \begin{equation}
 \label{recurrence-eq2}
 u_t=u_{xx}+g(t+\tau,u)
\end{equation}
with $u_1(t,\xi^{g\cdot \tau}(t))=\theta$. Observe also that $u_2(t,x)=\psi^g(t+\tau,x-\xi^g(t+\tau))$ is a generalized traveling wave of \eqref{recurrence-eq2} with $u_2(t,\xi^g(t+\tau))=\theta$. Then by Theorem \ref{thm-uniqueness-introduction},
\begin{equation}\label{an-equality-recurrence}
\psi^{g\cdot \tau}(t,x)=\psi^g (t+\tau,x).
\end{equation}
Setting $t=0$, we get \eqref{recurrence-eq1}.

$\rm(ii)$ By \eqref{an-equality-recurrence}, for any $\tau\in\R$,
\begin{equation*}%\label{another-equality-recurrence}
\psi^{f\cdot \tau}(t,x)=\psi^{f}(t+\tau,x), \quad x\in\R,\,\,t\in\R.
\end{equation*}
As a consequence, we have
\begin{equation}
\label{aux-recurrence-eq1}
\lim_{x\to -\infty}\psi^{f\cdot\tau}(t,x)=1,\quad\lim_{x\to\infty}\psi^{f\cdot\tau}(t,x)=0\quad\text{uniformly in}\,\,t\in\R\,\,\text{and}\,\,\tau\in\R.
\end{equation}
Moreover,
\begin{equation}\label{aux-recurrence-eq1-1}
u^{f\cdot \tau}(t,x+\xi^{f\cdot\tau}(t))=u^{f}(t+\tau,x+\xi^{f}(t+\tau)), \quad x\in\R,\,\,t\in\R.
\end{equation}
In particular, for any $\tau\in\R$.
\begin{equation}\label{aux-recurrence-eq1-2}
u_{x}^{f\cdot\tau}(t,\xi^{f\cdot\tau}(t))=u_{x}^{f}(t+\tau,\xi^{f}(t+\tau)),\quad t\in\R.
\end{equation}
Setting $x=0$ in \eqref{aux-recurrence-eq1-1} and then differentiating the resulting equality with respect to $t$, we obtain for any $t\in\R$
\begin{equation*}
\begin{split}
\dot{\xi}^{f\cdot\tau}(t)&=\frac{\frac{d}{dt}[u^{f}(t+\tau,x+\xi^{f}(t+\tau))]-u_{t}^{f\cdot\tau}(t,\xi^{f\cdot\tau}(t))}{u_{x}^{f\cdot\tau}(t,\xi^{f\cdot\tau}(t))}\\
&=\frac{\frac{d}{dt}[u^{f}(t+\tau,x+\xi^{f}(t+\tau))]-u_{t}^{f\cdot \tau}(t,\xi^{f\cdot \tau}(t))}{u_{x}^{f}(t+\tau,\xi^{f}(t+\tau))},
\end{split}
\end{equation*}
where we used \eqref{aux-recurrence-eq1-2} in the second equality. By a priori estimates for parabolic equations and Proposition \ref{prop-transition-wave}$\rm(ii)$, $\frac{d}{dt}[u^{f}(t+\tau,x+\xi^{f}(t+\tau))]$ and  $u_{t}^{f\cdot\tau}(t,\xi^{f\cdot\tau}(t))$ is bounded uniformly in $t\in\R$ and $\tau\in\R$, and $u_{x}^{f}(t+\tau,\xi^{f}(t+\tau))$ is negative uniformly in $t\in\R$ and $\tau\in\R$. Hence, $\dot{\xi}^{f\cdot\tau}(t)$ is bounded uniformly in $t\in\R$ and $\tau\in\R$, i.e.,
\begin{equation}\label{aux-recurrence-eq2}
\sup_{t\in\R,\tau\in\R}|\dot{\xi}^{f\cdot\tau}(t)|<\infty.
\end{equation}

For any $g\in H(f)$,  there is $\{t_n\}\subset \R$ such that $g_n:=f\cdot t_{n}\to g$ in $H(f)$.
By \eqref{aux-recurrence-eq1} and a priori estimates for parabolic equations,  there exists a continuous function $\psi(\cdot,\cdot;g):\R\times\R\ra[0,1]$ such that, up to a subsequence,
\begin{equation*}
\psi^{g_ {n}}(t,x)\ra \psi(t,x;g)\quad\text{as}\quad n\ra\infty\,\,\text{locally uniformly in}\,\,(t,x)\in\R\times\R
\end{equation*}
and
\begin{equation}\label{uniform-form-limit-recurrence}
\lim_{x\to -\infty}\psi(t, x;g)=1,\,\,\lim_{x\to\infty}\psi(t,x;g)=0\,\,\text{uniformly in}\,\, t\in\R,\,\, g\in H(f).
\end{equation}

We claim that $\psi^g(t,x)=\psi(t,x;g)$.
In fact, as a special case of \eqref{aux-recurrence-eq2},
\begin{equation}\label{uniform-bounded-interface-recurrence}
\sup_{t\in\R,n\in\N}|\dot{\xi}^{g_{n}}(t)|<\infty.
\end{equation}
As a result, there exists a continuous function $\xi(\cdot;g):\R\ra\R$ such that, up to a subsequence,
\begin{equation}\label{locally-uniform-convergence-recurrence}
\xi^{g_{n}}(t)\ra \xi(t;g)\quad\text{as}\quad n\ra\infty\,\,\text{locally uniformly in}\,\,t\in\R.
\end{equation}
Hence
\begin{equation*}
\psi^{g_n}(t,x-\xi^{g_n}(t))\to\psi(t,x-\xi(t;g);g)
\end{equation*}
as $n\to\infty$ locally uniformly in $(t,x)\in\R\times\R$.
Observe that $u(t,x;\psi^{g_n}(0,\cdot),g_n)(=\psi^{g_n}(t,x-\xi^{g_n}(t)))$ is an entire solution of \eqref{main-eqn-1} with $g$ being replaced by $g_n$ and
\begin{equation*}
u(t,x;\psi^{g_n}(0,\cdot),g_n)\to u(t,x;\psi(0,\cdot;g),g)
\end{equation*}
as $n\to\infty$ locally uniformly in $(t,x)\in\R\times\R$. It then follows that
\begin{equation*}
u(t,x;\psi(0,\cdot;g),g)=\psi(t,x-\xi(t;g);g),\quad x\in\R,\,\,t\in\R.
\end{equation*}
Thus, $\psi(t,x-\xi(t;g);g)$ is an entire solution of \eqref{main-eqn-1}. Set $u(t,x;g):=\psi(t,x-\xi(t;g);g)$. Due to \eqref{uniform-form-limit-recurrence}, for $u(t,x;g)$ being a generalized traveling wave, it remains to show that $\xi(t;g)$ is differentiable and $\sup_{t\in\R}|\dot{\xi}(t;g)|<\infty$. To do so, we first see that $u(t,x;g)$ is strictly decreasing in $x$ by the maximum principle and the fact that $\psi(t,x;g)$ is nonincreasing in $x$. Moreover, since $u(t,\xi(t;g);g)=\psi(t,0;g)=\lim_{n\ra\infty}\psi^{g_{n}}(t,0)=\theta$ for any $t\in\R$, $\xi(t;g)$ is continuously differentiable.
 Then, there must hold
\begin{equation}\label{aux-bound-of-interface-location}
\sup_{t\in\R}|\dot{\xi}(t;g)|\leq\sup_{t\in\R,n\in\N}|\dot{\xi}^{g_{n}}(t)|<\infty,
\end{equation}
otherwise we can easily deduce a contradiction from \eqref{uniform-bounded-interface-recurrence} and \eqref{locally-uniform-convergence-recurrence}. Consequently, $u(t,x;g)=\psi(t,x-\xi(t;g);g)$ is a generalized traveling wave of \eqref{main-eqn-1}. By Theorem \ref{thm-uniqueness-introduction}, we have
 $$
 \psi^g(t,x)=\psi(t,x;g).
 $$
This proves the claim, and then, $\rm(ii)$ follows from \eqref{uniform-form-limit-recurrence}.

$\rm(iii)$ It follows from \eqref{aux-recurrence-eq2} and \eqref{aux-bound-of-interface-location}.
\end{proof}

Finally, we  prove Theorem \ref{thm-recurrence}.

 \begin{proof}[Proof of Theorem \ref{thm-recurrence}]
 First of all, \eqref{recurrence-thm-eq1} follows from Lemma \ref{recurrence-lem}(i).

Second, we prove \eqref{recurrence-thm-eq2}, that is,  for any $g\in H(f)$,
\begin{equation*}
\dot \xi^g(t)=-\frac{\psi_{xx}^g(t,0)+g(t,\psi^g(t,0))}{\psi_x^g(t,0)}.
\end{equation*}
Differentiating $u^g(t,\xi^g(t))=\psi^g(t,0)=\theta$, we get $u^g_t(t,\xi^g(t))+u_x^g(t,\xi^g(t))\dot \xi^g(t)=0$, that is,
\begin{equation*}
\dot \xi^g(t)=-\frac{u^g_t(t,\xi^g(t))}{u_x^g(t,\xi^g(t))},
\end{equation*}
which is meaningful by Theorem \ref{thm-monotoncity-exponential-decay}$\rm(i)$. Note that
$$
u^g_t(t,x)=u^g_{xx}(t,x)+g(t,u^g(t,x)),
$$
and
$$
u^g_x(t,x)=\psi^g_x(t,x-\xi^g(t)),\quad u^g_{xx}(t,x)=\psi^g_{xx}(t,x-\xi^g(t)).
$$
We then have \eqref{recurrence-thm-eq2}.

Next, we show \eqref{recurrence-thm-eq3}, that is,  the mapping
 \begin{equation}
 \label{recurrence-eq4}
 [H(f)\ni g\mapsto \psi^g(0,\cdot)\in C_{\rm unif}^b(\R,\R)]\quad \text{
 is continuous.}
 \end{equation}
  Suppose that $g^*\in H(f)$, $\{g_n\}_{n\in\N}\subset H(f)$ and $g_n\to g^*$ in $H(f)$ as $n\to \infty$, that is,
 $$
 g_n(t,x)\to g^*(t,x)\,\,\text{as}\,\,n\to\infty\,\,\text{locally uniformly in}\,\,(t,x)\in\R\times\R.
 $$
Thus, by Proposition \ref{prop-transition-wave} and Lemma \ref{recurrence-lem},  without loss of generality, we may assume that there are continuous functions $\xi^{*}:\R\ra\R$ and $\psi^{*}:\R\times\R\ra[0,1]$ such that
 $$
 \xi^{g_n}(t)\to\xi^*(t),\,\,\psi^{g_n}(t,x)\to \psi^*(t,x)
 $$
as $n\to\infty$ uniformly in $x\in\R$ and locally uniformly in $t\in\R$. Hence
 $$
 \psi^{g_n}(t,x-\xi^{g_n}(t))\to\psi^*(t,x-\xi^*(t))
 $$
as $n\to\infty$ uniformly in $x\in\R$ and locally uniformly in $t\in\R$. Observe that $u(t,x;\psi^{g_n}(0,\cdot),g_n)=\psi^{g_n}(t,x-\xi^{g_n}(t))$ is an entire solution of \eqref{main-eqn-1} with $g$ being replaced by $g_n$ and
 $$
 u(t,x;\psi^{g_n}(0,\cdot),g_n)\to u(t,x;\psi^*(0,\cdot),g^*)
$$
as $n\to\infty$ locally uniformly in $(t,x)\in\R\times\R$.
It then follows that
 $$
 u(t,x;\psi^*(0,\cdot),g^*)=\psi^*(t,x-\xi^*(t)).
 $$
This implies that $\psi^*(t,x-\xi^*(t))$ is an entire solution of \eqref{main-eqn-1}. By Lemma \ref{recurrence-lem}(ii),
$$
\lim_{x\to -\infty}\psi^*(t,x)=1,\quad\lim_{x\to\infty}\psi^*(t,x)=0\quad\text{uniformly in}\,\,t\in\R.
$$
Note that $\psi^*(t,x)$ is nonincreasing in $x$. This together with comparison principle for parabolic equations  implies that $\psi_x(t,x)<0$ for all $t\in\R$ and $x\in\R$ and then $\xi^*(t)$ is continuously differentiable. By Lemma \ref{recurrence-lem}(iii), we have
$$
\sup_{t\in\R}|\dot\xi^*(t)|<\infty.
$$
It then follows that  $\psi^*(t,x-\xi^*(t))$ is a
 generalized traveling wave of \eqref{main-eqn-1} with $g$ being replaced by $g^*$.
Then by Theorem \ref{thm-uniqueness-introduction}, $\psi^*(t,x)=\psi^{g^*}(t,x)$, and therefore,
 $$
 \psi^{g_n}(0,x)\to \psi^{g^*}(0,x)\,\,\text{as}\,\,n\to\infty\,\,\text{uniformly in}\,\,x\in\R.
 $$
That is, \eqref{recurrence-eq4} holds.

Now assume that $f(t,u)$ is almost periodic in $t$ uniformly with respect to $u$ in bounded sets. For any given sequences  $\{\alpha_{n}^{'}\}_{n\in\N}$, $\{\beta_n^{'}\}_{n\in\N}\subset\R$, there are subsequences $\{\alpha_n\}_{n\in\N}\subset\{\alpha_{n}^{'}\}_{n\in\N}$, $\{\beta_n\}_{n\in\N}\subset\{\beta_n^{'}\}_{n\in\N}$ such that
 $$
 \lim_{n\to\infty}\lim_{m\to\infty}f(t+\alpha_n+\beta_m,u)=\lim_{n\to\infty}f(t+\alpha_n+\beta_n,u)
 $$
for all $t\in\R$ and $u\in\R$. Let $g(t,u)=\lim_{m\to\infty}f(t+\beta_m,u)$ and $h(t,u)=\lim_{n\to\infty}g(t+\alpha_n,u)$. By \eqref{recurrence-eq1} and \eqref{recurrence-eq4},
 $$
 \lim_{m\to\infty}\psi^f(t+\beta_m,x)=\lim_{m\to\infty}\psi^{f\cdot(t+\beta_m)}(0,x)=\psi^{g\cdot t}(0,x),
 $$
 $$
 \lim_{n\to\infty}\psi^g(t+\alpha_n,x)=\lim_{n\to\infty}\psi^{g\cdot (t+\alpha_n)}(0,x)=\psi^{h\cdot t}(0,x)=\psi^h (t,x),
 $$
 and
 $$
 \lim_{n\to\infty} \psi^f(t+\alpha_n+\beta_n,x)=\lim_{n\to\infty}\psi^{f\cdot(t+\alpha_n+\beta_n)}(0,x)=\psi^{h\cdot t}(0,x)=\psi^h(t,x)
 $$
 for all $t\in\R$ and $x\in\R$.
 Therefore,
 $$
 \lim_{n\to\infty}\lim_{m\to\infty}\psi^{f}(t+\alpha_n+\beta_m,x)=\lim_{n\to\infty}\psi^{f}(t+\alpha_n+\beta_n,x),\quad \forall\,\, t\in\R,\,\, x\in\R.
 $$
 Obviously, $\psi^f(t,x)$ is uniformly continuous in $t\in\R$ and $x\in\R$.
 By Remark  \ref{almost-periodic-rk}(ii), $\psi^f(t,x)$ is almost periodic in $t$ uniformly with respect to $x$.
 Moreover, by Remark   \ref{almost-periodic-rk}(iii) and \eqref{recurrence-eq4},
 $\mathcal{M}(\psi^f(\cdot,\cdot))\subset\mathcal{M}(f(\cdot,\cdot))$.

 Finally,  note that
 $$
 \psi^f_x(t,x)=\lim_{h\to 0}\frac{\psi^f(t,x+h)-\psi^f(t,x)}{h}\quad\text{and}\quad\psi_{xx}^f(t,0)=\lim_{h\to 0}\frac{\psi^f_x(t,x+h)-\psi^f_x(t,x)}{h}
 $$
 uniformly in $t\in\R$ and $x\in\R$. Hence $\psi^f_x(t,x)$ and $\psi^f_{xx}(t,x)$ are almost periodic in $t$ uniformly with respect to $x$.
 Then
 by \eqref{recurrence-thm-eq2},  $\dot\xi^f(t)$ is almost periodic and the limit
 $$
\lim_{t\to\infty}\frac{\xi^f(t)-\xi^f(0)}{t}=\lim_{t\to\infty}\frac{1}{t}\int_0^t \dot\xi^f(\tau)d\tau
 $$
exists. Moreover, we also have $\mathcal{M}(\psi_x^f(\cdot,\cdot))$, $\mathcal{M}(\psi^f_{xx}(\cdot,\cdot))\subset\mathcal{M}(f(\cdot,\cdot))$ by Remark \ref{almost-periodic-rk}(iii) and \eqref{recurrence-eq4}. Then, by \eqref{recurrence-thm-eq2} again,
$\mathcal{M}(\dot\xi^f(\cdot))\subset\mathcal{M}(f(\cdot,\cdot))$.
 \end{proof}

\section*{Acknowledgements}

We would like to thank the referee for carefully reading the original manuscript and providing helpful suggestions resulting in some improvements of our results.

\bibliographystyle{amsplain}

\end{document}